\newcommand{\D}{\Delta}
\definecolor{darkcyan}{rgb}{0, 0.7, 0.7}
\definecolor{darkgreen}{rgb}{0, 0.7, 0}
\definecolor{truemagenta}{rgb}{1, 0, 1}
\definecolor{amber}{rgb}{1.0, 0.75, 0.0}
\theoremstyle{plain}
\setlist{nolistsep}
\definecolor{green}{HTML}{66FF66}
\definecolor{myGreen}{HTML}{009900}
\theoremstyle{definition}
\newtheorem{defn}{Definition}[section]
\newtheorem{prop}[defn]{Proposition}
\newtheorem{thm}[defn]{Theorem}
\newtheorem{example}[defn]{\textbf{Example}}
\newtheorem{lemma}[defn]{Lemma}
\newenvironment{remark}
  {\pushQED{\qed}\remarkx}
  {\popQED\endremarkx}
\newenvironment{notation}
  {\pushQED{\qed}\notationx}
  {\popQED\endnotationx}
\def\bbm{\begin{pmatrix}}
\def\ebm{\end{pmatrix}}
\newcommand{\Ext}{\text{Ext}}
\newcommand{\C}{\mathbb{C}}
\newcommand{\F}{\mathbb{F}}
\DeclareMathOperator{\im}{\mathrm{im}}
\definecolor{chartgray}{gray}{0.5}
\definecolor{darkcyan}{rgb}{0, 0.7, 0.7}
\definecolor{truecyan}{rgb}{0, 1, 1}
\definecolor{darkgreen}{rgb}{0, 0.65, 0}
\definecolor{truemagenta}{rgb}{1, 0, 1}
\definecolor{amber}{rgb}{1.0, 0.75, 0.0}
\definecolor{fuchsia}{rgb}{0.75, 0.0, 1.0}
\definecolor{darkcyan}{rgb}{0, 0.7, 0.7}
\definecolor{electricpurple}{rgb}{0.75, 0.0, 1.0}
\title[The algebraic Novikov Spectral Sequence for $\mathit{tmf}$]{The algebraic Novikov spectral sequence for topological modular forms}
\author{Jake Francis Baer}
\date{\today}
\subjclass[2010]{55Q10, 55T15}
\keywords{topological modular forms, motivic stable homotopy theory, Adams-Novikov spectral sequence, Adams spectral sequence, stable homotopy groups}
\pgfplotsset{compat=1.17} 
\begin{document}

\begin{abstract}
We compute the $\C$-motivic Adams spectral sequence for $\mathit{mmf}/\tau$. Up to reindexing, this spectral sequence is isomorphic to the algebraic Novikov spectral sequence for topological modular forms. We give a full analysis of the inclusion and projection maps which occur in the long exact sequence induced by multiplication by $\tau$ on Adams $E_2$-pages. Our computation is totally algebraic and can be used to obtain the $E_2$-page of the Adams-Novikov spectral sequence for topological modular forms in a way that is independent from previous computations using the elliptic curves Hopf algebroid.
\end{abstract}
 \maketitle
\setcounter{tocdepth}{1}
\tableofcontents

\section{Introduction}

A fundamental problem in algebraic topology is the determination of the stable homotopy groups of spheres. An approach which has seen great success is to consider the analogous problem in an enriched version of the stable homotopy category, or a deformation of the category of spectra \cite{stems} \cite{Rmot} \cite{C2} \cite{kerv} \cite{BHS}. In such a deformation, the homotopy groups of the unit object inherit an extra grading which spreads classes out by remembering additional information. For example, the homotopy groups of the $\C$-motivic sphere spectrum spreads out classes by providing an extra grading which remembers their Adams-Novikov filtration \cite{HKO}. 

These enriched contexts provide facets of the computation which can be approached by totally algebraic means. These techniques were flagshipped by the following two theorems.
\begin{prop}[\cite{GWX}]\label{GWXcat}
There is a subcategory $\mathrm{Mod}_{\mathbb{S}/\tau}$ of the category of $p$-complete cellular $\C$-motivic spectra which is equivalent to Hovey's stable derived category of $BP_*BP$-comodules.
\end{prop}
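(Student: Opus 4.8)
The plan is to realize $\mathbb{S}/\tau$ as a highly structured ring spectrum, to construct a colimit-preserving symmetric monoidal functor from $\mathrm{Mod}_{\mathbb{S}/\tau}$ to Hovey's stable category $\mathrm{Stable}(BP_*BP)$ of $BP_*BP$-comodules, and then to show that functor is an equivalence by reducing everything to a single computation of maps between unit objects. First I would recall that the cofiber $\mathbb{S}/\tau$ of $\tau\in\pi_{0,-1}(\mathbb{S}^{\wedge}_p)$ admits an $E_\infty$-ring structure in $p$-complete cellular $\C$-motivic spectra; this is not formal, but it follows from obstruction theory, the relevant obstruction groups vanishing because $\pi_{**}(\mathbb{S}^{\wedge}_p)$ is extremely sparse (each stem occupies only a narrow band of weights). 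The $\infty$-categorical machinery then produces a presentably symmetric monoidal stable $\infty$-category $\mathrm{Mod}_{\mathbb{S}/\tau}$ which is compactly generated by the bigraded translates of its unit and which sits inside $p$-complete cellular $\C$-motivic spectra via the forgetful functor. On the other side, Hovey's $\mathrm{Stable}(BP_*BP)$ is built precisely so that the unit $BP_*$ is a compact generator, which is exactly what makes it — rather than the naive unbounded derived category of comodules — the correct target.

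To build the comparison functor I would use the $\C$-motivic Brown--Peterson spectrum $BP^{\C}$, a homotopy-commutative (in fact $E_\infty$) $\mathbb{S}$-algebra with $\pi_{**}(BP^{\C})\cong BP_*[\tau]$ in the appropriate bigrading. Then $BP^{\C}/\tau:=BP^{\C}\wedge_{\mathbb{S}}\mathbb{S}/\tau$ is an $\mathbb{S}/\tau$-algebra, and a direct computation with the defining cofiber sequence (using that $\pi_{**}(BP^{\C}\wedge_{\mathbb{S}}BP^{\C})=BP_*BP[\tau]$ is $\tau$-torsion-free) shows that the Hopf algebroid
\[
\bigl(\pi_{**}(BP^{\C}/\tau),\ \pi_{**}\bigl((BP^{\C}\wedge_{\mathbb{S}}BP^{\C})/\tau\bigr)\bigr)
\]
is canonically isomorphic to $(BP_*,BP_*BP)$. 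Running the $BP^{\C}/\tau$-based Adams resolution of an $\mathbb{S}/\tau$-module $M$ — forming the Amitsur/Adams cosimplicial object of $BP^{\C}/\tau$ over $\mathbb{S}/\tau$ smashed with $M$, applying $\pi_{**}$ levelwise, and assembling the result into $\mathrm{Stable}(BP_*BP)$ — defines a colimit-preserving symmetric monoidal functor $\Phi\colon\mathrm{Mod}_{\mathbb{S}/\tau}\to\mathrm{Stable}(BP_*BP)$ carrying the unit $\mathbb{S}/\tau$ to $BP_*$.

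Since $\Phi$ preserves colimits and compact objects and both sides are compactly generated by bigraded shifts of their units, the standard recognition principle says $\Phi$ is an equivalence as soon as it induces an isomorphism
\[
\pi_{**}\,\mathrm{Map}_{\mathbb{S}/\tau}(\mathbb{S}/\tau,\mathbb{S}/\tau)\ \xrightarrow{\,\cong\,}\ \mathrm{Ext}^{\,*}_{BP_*BP}(BP_*,BP_*),
\]
that is, as soon as $\pi_{**}(\mathbb{S}/\tau)\cong\mathrm{Ext}_{BP_*BP}(BP_*,BP_*)$ with the motivic bidegree corresponding to the (stem, Adams--Novikov filtration) bigrading. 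I would extract this from the $\C$-motivic $H\F_p$-Adams spectral sequence for $\mathbb{S}/\tau$: a motivic change-of-rings argument identifies the $E_2$-page $\mathrm{Ext}_{\mathcal{A}^{\C}_{**}}(\F_p,H_{**}(\mathbb{S}/\tau))$ with $\mathrm{Ext}_{BP_*BP}(BP_*,BP_*)$, and this spectral sequence then collapses at $E_2$ with no hidden extensions for a weight reason: because $BP_*$ and $BP_*BP$ are concentrated in even internal degrees, every class on the $E_2$-page has motivic weight $w$ with $2w$ equal to the stem plus the Adams filtration, while the Adams differential $d_r$ changes (stem, filtration) by $(-1,+r)$ and preserves $w$, so $d_r$ vanishes for all $r\ge 2$ and the same count forbids hidden multiplicative extensions. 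Feeding the resulting isomorphism back shows $\Phi$ is an equivalence; a final bookkeeping step matches the two sides up to the usual $p$-completion conventions on $\mathrm{Stable}(BP_*BP)$.

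The main obstacle is the degeneration step. The construction of $\Phi$ and the recognition principle are soft, and the $E_2$-identification is a bounded algebraic computation, but the assertion that the $\mathbb{S}/\tau$-Adams spectral sequence collapses with no hidden extensions — equivalently, that smashing with $\mathbb{S}/\tau$ converts the motivic Adams spectral sequence into purely algebraic data — is precisely the non-formal phenomenon that makes $\mathbb{S}/\tau$ ``algebraic'', and it forces one to exploit the specific numerology of the motivic weight rather than any general category theory. (One can instead organize the whole argument through Pstragowski's synthetic spectra: after identifying $p$-complete cellular $\C$-motivic spectra with even $BP$-based synthetic spectra, $\mathrm{Mod}_{\mathbb{S}/\tau}$ becomes the ``special fiber'' $\mathrm{Stable}(BP_*BP)$ essentially by construction — but the same degeneration input reappears there as the computation of $\pi_{**}(\mathbb{S}/\tau)$.)
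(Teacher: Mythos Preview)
The paper does not prove this proposition; it is stated with a citation to \cite{GWX} as background, and no argument is supplied in the paper itself. There is therefore no proof here to compare your proposal against.

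That said, your sketch is broadly the Gheorghe--Wang--Xu strategy, but there is a genuine error in the degeneration step. You assert that the $E_2$-page of the motivic $H\F_p$-Adams spectral sequence for $\mathbb{S}/\tau$ is identified, by a change of rings, with $\mathrm{Ext}_{BP_*BP}(BP_*,BP_*)$, and then collapses for weight reasons. That identification is false: the $E_2$-page in question is $\mathrm{Ext}_{\mathcal{A}_*/\tau}(\F_p,\F_p)$, which is the $E_2$-page of the \emph{algebraic Novikov} spectral sequence (this is exactly the content of the companion result cited from \cite{GWX} immediately afterward), and the algebraic Novikov spectral sequence has many nontrivial differentials --- indeed, the bulk of the present paper is devoted to computing them in the $\mathit{tmf}$ case. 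The spectral sequence you actually want is the motivic \emph{Adams--Novikov} spectral sequence for $\mathbb{S}/\tau$, based on $BP^{\C}$ (or $MGL$): its $E_2$-page really is $\mathrm{Ext}_{BP_*BP}(BP_*,BP_*)$, and \emph{that} one collapses at $E_2$ by precisely the weight-counting argument you give (every class sits on the line $2w=s+f$, while $d_r$ preserves $w$, lowers $s$ by $1$, and raises $f$ by $r\ge 2$). With this correction in place, the rest of your outline --- the $E_\infty$ structure on $\mathbb{S}/\tau$, the construction of the comparison functor via the $BP^{\C}/\tau$ Amitsur resolution, and the recognition principle reducing everything to $\pi_{**}(\mathbb{S}/\tau)\cong\mathrm{Ext}_{BP_*BP}(BP_*,BP_*)$ --- matches the approach of \cite{GWX}.
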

\begin{prop}[\cite{GWX}]\label{GWXss}
    The $\C$-motivic Adams spectral sequence for $\mathbb{S}/\tau$ is isomorphic to the algebraic Novikov spectral sequence for $BP_*$.
\end{prop}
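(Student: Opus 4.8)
The plan is to make the $\C$-motivic Adams spectral sequence for $\mathbb{S}/\tau$ algebraically explicit, to relate its input to that of the algebraic Novikov spectral sequence through the structure of $\mathcal{A}^{\C}_{**}/\tau$, and then to match the two spectral sequences --- including the higher differentials --- by transporting the topological Adams resolution of $\mathbb{S}/\tau$ through the equivalence of Proposition~\ref{GWXcat}.

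First I would identify the $\C$-motivic Adams spectral sequence for $\mathbb{S}/\tau$ algebraically. Since $\tau$ acts injectively on $H_{**}(\mathbb{S};\F_p)=\F_p[\tau]$, its cofiber $H_{**}(\mathbb{S}/\tau;\F_p)$ is $\F_p$, concentrated in bidegree $(0,0)$, with the trivial comodule structure over the $\F_p$-Hopf algebra $\mathcal{A}^{\C}_{**}/\tau:=\mathcal{A}^{\C}_{**}\otimes_{\F_p[\tau]}\F_p$; consequently the cobar complex of $\mathcal{A}^{\C}_{**}$ with these coefficients is literally the cobar complex of $\mathcal{A}^{\C}_{**}/\tau$, so the $E_1$-page is $\overline{\mathcal{A}^{\C}_{**}/\tau}^{\,\otimes\bullet}$ and $E_2^{\mathrm{ASS}(\mathbb{S}/\tau)}\cong\ctor_{\mathcal{A}^{\C}_{**}/\tau}(\F_p,\F_p)$. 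I would then record the structure of $\mathcal{A}^{\C}_{**}/\tau$: one has $\mathcal{A}^{\C}_{**}/\tau\cong P_*\otimes_{\F_p}E_*$, where $P_*$ is the polynomial sub-Hopf-algebra generated by the $\xi_i$ and $E_*=(\mathcal{A}^{\C}_{**}/\tau)/\!\!/P_*$ is exterior on the $\tau_j$, primitively (at $p=2$ this amounts to discarding the relation $\tau_i^2=\tau\xi_{i+1}$).

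Next I would align this with the algebraic side. The $I$-adic filtration of $BP_*$ by powers of the augmentation ideal is a filtration by $BP_*BP$-subcomodules, and the associated filtration of the cobar complex of $BP_*BP$ defines the algebraic Novikov spectral sequence; using $BP_*BP/I\cong P_*$ and $\mathrm{gr}_I BP_*=\F_p[q_0,q_1,\dots]$, a change of rings rewrites its $E_1$-page in terms of $\ctor_{P_*}(\F_p,-)$. The key algebraic input is the isomorphism of $P_*$-comodules
$$\mathrm{gr}_I BP_*\;\cong\;\ctor_{E_*}(\F_p,\F_p)=\F_p[\bar\tau_0,\bar\tau_1,\dots],\qquad q_i\longleftrightarrow\bar\tau_i,$$
which matches the right-unit coaction of $BP_*BP$ on $\mathrm{gr}_I BP_*$ with the residual $P_*$-coaction produced by $\psi(\tau_n)=\tau_n\otimes1+\sum_i\xi_{n-i}^{p^i}\otimes\tau_i$ in $\mathcal{A}^{\C}_{**}/\tau$ --- both are the Milnor-primitive coproduct, under the reindexing that converts the motivic weight into the internal Adams--Novikov degree and the motivic Adams filtration into the algebraic Novikov filtration. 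Running this through the Cartan--Eilenberg spectral sequence of $P_*\to\mathcal{A}^{\C}_{**}/\tau\to E_*$ expresses $\ctor_{\mathcal{A}^{\C}_{**}/\tau}(\F_p,\F_p)$ via $\ctor_{P_*}(\F_p,\mathrm{gr}_I BP_*)$ and pins down the full dictionary of trigradings.

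Finally, and this is the crux, I would match the two spectral sequences beyond their early pages. The higher $\C$-motivic Adams differentials for $\mathbb{S}/\tau$ are topological and are not visible from the cobar complex, so I would invoke Proposition~\ref{GWXcat}: the equivalence $\Theta$ is symmetric monoidal and sends $\mathbb{S}/\tau$ to $BP_*$, hence carries the $H\F_p^{\C}$-based Adams tower of $\mathbb{S}/\tau$ to a tower over $BP_*$ in Hovey's stable derived category; I would compute the image of $H\F_p^{\C}\wedge\mathbb{S}/\tau$ and verify that the resulting tower realizes the $I$-adic (algebraic Novikov) filtration, equivalently that the induced filtration of $\pi_{**}(\mathbb{S}/\tau)\cong\ext_{BP_*BP}(BP_*,BP_*)$ is the algebraic Novikov filtration. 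The two spectral sequences then arise from isomorphic filtered objects with isomorphic abutments, which gives the stated isomorphism. I expect the main obstacle to be precisely this identification of towers under $\Theta$ --- pinning down $\Theta(H\F_p^{\C}\wedge\mathbb{S}/\tau)$ and checking that one lands on the algebraic Novikov filtration itself rather than on some abstractly isomorphic one; careful bookkeeping of the reindexing bijection, the multiplicative structure of $\mathbb{S}/\tau$ at $p=2$, and the convergence of all the spectral sequences in play are the remaining points to be checked.
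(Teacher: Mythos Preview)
The paper does not supply a proof of this proposition: it is quoted directly from \cite{GWX} and used as black-box input, so there is no ``paper's own proof'' to compare against. Your outline is therefore not a reconstruction of anything in this paper but rather a sketch of the argument in the cited source.

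That said, your sketch is broadly faithful to the strategy of \cite{GWX}. The identification of $\mathcal{A}^{\C}_{**}/\tau$ with $P_*\otimes E_*$, the use of the Cartan--Eilenberg spectral sequence for the extension $P_*\to\mathcal{A}^{\C}_{**}/\tau\to E_*$, and the matching $\mathrm{gr}_I BP_*\cong\ctor_{E_*}(\F_p,\F_p)$ as $P_*$-comodules are exactly the ingredients that give the isomorphism of $E_2$-pages. You are also right that the nontrivial content lies in the last step: one must show that the equivalence of Proposition~\ref{GWXcat} carries the motivic Adams tower of $\mathbb{S}/\tau$ to the $I$-adic filtration tower of $BP_*$, and not merely to some filtration with the same associated graded. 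In \cite{GWX} this is done by identifying $\Theta(H\F_p^{\C}\wedge\mathbb{S}/\tau)$ explicitly and checking that the resulting cosimplicial object is the one computing the algebraic Novikov spectral sequence; your proposal flags this as the main obstacle but does not carry it out, so as written it is an outline rather than a proof. The bookkeeping issues you mention (reindexing, convergence, the ring structure at $p=2$) are genuine but routine once the tower identification is in hand.
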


The goal of these methods is to not only get new results, but to do so in a way that allows for the least potential indeterminacy. Such indeterminacy is characteristic of homotopical calculations. Proposition \labelcref{GWXcat} and Proposition \labelcref{GWXss} were applied in \cite{IHES} to fully compute the stable homotopy groups of spheres through the 90-stem.

Another key method in \cite{IHES} is to compare the Adams spectral sequence for the $\C$-motivic sphere spectrum to the Adams spectral sequence for the motivic modular forms spectrum $\mathit{mmf}$. The $\C$-motivic spectrum $\mathit{mmf}$ has the property that its Adams $E_2$-page is isomorphic to the cohomology of a particularly nice subalgebra of the $\C$-motivic dual Steenrod algebra, and can be thought of as the $\C$-motivic version of the classical connective topological modular forms spectrum $\mathit{tmf}$ \cite{mmf}. This identification makes the Adams spectral sequence for $\mathit{mmf}$ a sort of finite complexity approximation of the Adams spectral sequence for the sphere. Differentials and hidden extensions in the Adams spectral sequence for $\mathit{mmf}$ can then be pulled back to the Adams spectral sequence for the sphere by analyzing the map induced on Adams spectral sequences by the unit map for $\mathit{mmf}$.

Combining the above, many of the  differentials in \cite{IHES} are computed by analyzing the maps of Adams spectral sequences induced by the diagram

 \begin{equation}\label{dia}
  \centering
 \begin{tikzcd}
	{\mathbb{S}} & {\mathbb{S}/\tau} \\
	{\mathit{mmf}} & {\mathit{mmf}/\tau}
	\arrow[from=1-1, to=2-1]
	\arrow[from=1-1, to=1-2]
	\arrow[from=2-1, to=2-2]
	\arrow[from=1-2, to=2-2].
\end{tikzcd}
\end{equation}
The results in \cite{IHES} then rely on a thorough analysis of the Adams spectral sequences for $\mathbb{S}/\tau$ and $\mathit{mmf}$. In this paper we compute the $\C$-motivic Adams spectral sequence for $\mathit{mmf}/\tau$ filling in the lower right corner of \labelcref{dia}. In particular, the results of this paper were used in \cite{tmfANSS} where the remaining multiplicative uncertainties in the homotopy of $\mathit{tmf}$ were resolved.

Since the Adams spectral sequence for $\mathit{mmf}/\tau$ is isomorphic to the algebraic Novikov spectral sequence for $\mathit{tmf}$, this paper also provides a self contained and independent computation of the Adams-Novikov $E_2$-page for $\mathit{tmf}$. The $E_2$-page of the Adams-Novikov spectral sequence for $\mathit{tmf}$ was originally determined by Bauer using a series of Bockstein spectral sequences computing the cohomology of the elliptic curves Hopf algebroid \cite{bauer}. In contrast to this, our paper computes the $E_2$-page of the Adams-Novikov spectral sequence for $\mathit{tmf}$ without using elliptic curves or information about $BP_*\mathit{tmf}$.

\subsection{Outline}
In Section \labelcref{sec2} we consolidate the necessary background material on the $\C$-motivic Steenrod algebra and the $\C$-motivic May spectral sequence. In Section \labelcref{sec3} we demonstrate our techniques by computing the algebraic Novikov spectral sequence for connective real $K$-theory. In Section \labelcref{sec4} we compute the $E_2$-page $\text{Ext}_{\frac{\mathcal{A}(2)_*}{\tau}}$ of the Adams spectral sequence for $\mathit{mmf}/\tau$ as a ring using a $\C$-motivic May spectral sequence. In Section \labelcref{5} we compute the structure of $\text{Ext}_{\frac{\mathcal{A}(2)_*}{\tau}}$ as an $\text{Ext}_{\mathcal{A}(2)_*}$-module and analyze the maps 
\begin{align*}
    &i_*: \text{Ext}_{\mathcal{A}(2)_*}^{*, *, *} \to \text{Ext}_{\frac{\mathcal{A}(2)_*}{\tau}}^{*, *, *} &&q_*: \text{Ext}_{\frac{\mathcal{A}(2)_*}{\tau}}^{*, *, *} \to \text{Ext}_{\mathcal{A}(2)_*}^{*+1, *-1}
\end{align*}
which occur in the long exact sequence induced by multiplication by $\tau$ on $\text{Ext}_{\mathcal{A}(2)_*}$. In Section \labelcref{sec6} we compute the differentials in the motivic Adams spectral sequence for $\mathit{mmf}/\tau$. In Section \labelcref{sec7} we resolve hidden extensions on the $E_\infty$-page of the Adams spectral sequence for $\mathit{mmf}/\tau$ and remark on a few applications of our computation.

\subsection{Notations and Conventions}
To avoid clutter, we fix the following notations and conventions throughout the paper:
\begin{enumerate}
    \item Where useful, we have attempted to compare graded objects with explicit degrees. When considering graded objects as a whole, we use $\ast$ to denote a single grading and ${\ast, \ast}$ and  ${\ast, \ast, \ast}$ to denote a bigrading or a trigrading respectively. 
    \item All spectra and Adams spectral sequences are assumed to be $\C$-motivic and localized at 2 and $\eta$ unless explicitly stated otherwise. For example, we denote the $\C$-motivic sphere spectrum by $\mathbb{S}$ and the $\C$-motivic Eilenberg-Maclane spectrum for $\F_2$ by $H$.
    \item We will write $\mathcal{A}_*$ and $\mathcal{A}(n)_*$ for the $\C$-motivic dual Steenrod algebra $H_{\ast, \ast} H$ and its appropriate quotients.
    \item We will write $\text{Ext}_{\mathcal{A}_*}(X)$ for $\text{Ext}_{\mathcal{A}_*}(\F_2[\tau], H_{\ast, \ast} X)$ and (we also need the same convention for $\mathcal{A}(n)_*$ and $\frac{\mathcal{A}(n)_*}{\tau}$). When we omit the $X$ it is implied to be the sphere. 
\end{enumerate}

\subsection*{Acknowledgements}

We would like to thank William Balderrama, Bob Bruner, Dan Isasksen, and Andrew Salch for various helpful discussions. We would like to thank the eCHT community, particularly all the great speakers for the Fall 2022 $\mathit{tmf}$ seminar.

\section{Background}\label{sec2}

\subsection{The Motivic Steenrod Algebra}

Recall that the $\C$-motivic dual Steenrod algebra $\mathcal{A}_*$ at the prime 2 is given by
\begin{align*}
    \mathcal{A}_* \cong \frac{\F_2[\tau][\tau_0, \tau_1, ..., \xi_1, \xi_2, ...]}{\tau_i^2 + \tau \xi_{i + 1}}
\end{align*}

with coproduct
\begin{align*}
    &\Delta(\tau_i) = \tau_i \otimes 1 + \sum_{k = 0}^i \xi_{i - k}^{2^k} \otimes \tau_k &&
    \Delta(\xi_i) = \sum_{k = 0}^i \xi_{i - k}^{2^k} \otimes \xi_k.
\end{align*} and bigrading
\begin{align*}
    &|\tau| = (0, -1)&&
    |\tau_i| = (2^{i + 1} - 1, 2^i - 1)&&
    |\xi_i| = (2^{i + 1} - 2, 2^i - 1)
\end{align*}
where the first grading represents topological degree and the second grading is the motivic weight. We consider the quotient Hopf algebra $\mathcal{A}(n)_*$ which is dual to the subalgebra of the $\C$-motivic Steenrod algebra generated by $Sq^1, Sq^2, ...,Sq^{2^n}$ and is given by
\begin{align*}
    \mathcal{A}(n)_* \cong \frac{\F_2[\tau][\tau_0, \tau_1, ..., \tau_n][\xi_1, \xi_2, ..., \xi_n]}{\tau_i^2 + \tau \xi_{i+1}, \xi_1^{2^n}, \xi_{2}^{2^{n - 1}}, ..., \xi_{n}^2, \tau_n^2}.
\end{align*}
We uniformize notation by allowing $n = \infty$ and denoting $\mathcal{A}_*$ by $\mathcal{A}_*(\infty)$. We denote by $\text{Ext}_{\mathcal{A}(n)_*}$ the cohomology $\text{Ext}_{\mathcal{A}(n)_*}(\F_2[\tau], \F_2[\tau])$ of the Hopf algebra $\mathcal{A}(n)_*$. We grade $\text{Ext}^{*, *, *}_{\mathcal{A}(n)_*}$ by stem, filtration, and motivic weight respectively. In order to relate $\C$-motivic homotopy theory to the algebraic Novikov spectral sequence, we consider quotients of $\mathcal{A}(n)_*$ by the element $\tau$.

\begin{example}
    In \cite{hill} Mike Hill computes the cohomology $\text{Ext}_{\mathcal{A}(1)_*}$ of $\C$-motivic $\mathcal{A}(1)_*$. The Hopf algebra $\mathcal{A}(1)_*$ and its quotient by $\tau$ are given by
    \begin{align*}
    &\mathcal{A}(1)_* \cong \frac{\F_2[\tau][\tau_0, \tau_1, \xi_1]}{\tau_0^2 + \tau \xi_{1}, \xi_1^{2}, \tau_1^2}&&\frac{\mathcal{A}(1)_*}{\tau} \cong \frac{\F_2[\tau_0, \tau_1, \xi_1]}{\tau_0^2, \xi_1^{2}, \tau_1^2}.
\end{align*}

\end{example}
\begin{example}
    In \cite{Isa09} Dan Isaksen computes the cohomology $\text{Ext}_{\mathcal{A}(2)_*}$ of $\C$-motivic $\mathcal{A}(2)_*$. The Hopf algebra $\mathcal{A}(2)_*$ and its quotient by $\tau$ are given by
     \begin{align*}
    &\mathcal{A}(2)_* \cong \frac{\F_2[\tau][\tau_0, \tau_1, \tau_2, \xi_1, \xi_2]}{\tau_0^2 + \tau \xi_{1},\tau_1^2 + \tau \xi_{2}, \xi_1^{4}, \xi_{2}^{2},\tau_2^2} && \frac{\mathcal{A}(2)_*}{\tau} \cong \frac{\F_2[\tau_0, \tau_1, \tau_2, \xi_1, \xi_2]}{\tau_0^2, \tau_1^2, \xi_1^{4}, \xi_{2}^{2},\tau_2^2}.
\end{align*} 

The generators for $\text{Ext}_{\mathcal{A}(2)_*}$ as a ring are given in Table \ref{1}.

\end{example}

Multiplication by $\tau$ gives us a short exact sequence

\begin{align*}
    0 \to \mathcal{A}(n)_* \xrightarrow{\tau} \mathcal{A}(n)_* \to \frac{\mathcal{A}(n)_*}{\tau} \to 0
\end{align*}
which induces the long exact sequence 

\begin{center}
\begin{equation}\label{les}
\begin{tikzcd}
  \cdots \rar & \text{Ext}_{\mathcal{A}(n)_*}^{*, *, * + 1} \rar{\tau}
             \ar[draw=none]{d}[name=X, anchor=center]{}
    & \text{Ext}_{\mathcal{A}(n)_*}^{*, *, *} \rar{i_*} & \text{Ext}_{\frac{\mathcal{A}(n)_*}{\tau}}^{*, *, *} \ar[rounded corners,
            to path={ -- ([xshift=2ex]\tikztostart.east)
                      |- (X.center) \tikztonodes
                      -| ([xshift=-2ex]\tikztotarget.west)
                      -- (\tikztotarget)}]{dll}[at end]{q_*} \\      
  &\text{Ext}_{\mathcal{A}(n)_*}^{* - 1, * + 1, * + 1} \rar{\tau}
    & \cdots 
\end{tikzcd}
\end{equation}
\end{center}
on $\text{Ext}$ groups.

\begin{lemma}\label{iring}
    Let $0 \leq n \leq \infty$. The map $i_*: \text{Ext}_{\mathcal{A}(n)_*}^{*, *, *} \to \text{Ext}_{\frac{\mathcal{A}(n)_*}{\tau}}^{*, *, *}$ is a ring map.
\end{lemma}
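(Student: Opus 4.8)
The plan is to realize $i_*$ as the map induced on cohomology by a map of Hopf algebras, so that functoriality of $\Ext$ gives the ring-map property for free. The short exact sequence $0 \to \mathcal{A}(n)_* \xrightarrow{\tau} \mathcal{A}(n)_* \to \frac{\mathcal{A}(n)_*}{\tau} \to 0$ exhibits $\frac{\mathcal{A}(n)_*}{\tau}$ as a quotient Hopf algebra of $\mathcal{A}(n)_*$, with quotient map $\pi\colon \mathcal{A}(n)_* \to \frac{\mathcal{A}(n)_*}{\tau}$. First I would check that $\pi$ is a map of Hopf algebras over $\F_2[\tau]$ (equivalently, that the ideal $(\tau)$ is a Hopf ideal): since $\tau$ is primitive and central, $\Delta(\tau) = \tau \otimes 1 + 1 \otimes \tau$ lies in $(\tau)\otimes \mathcal{A}(n)_* + \mathcal{A}(n)_* \otimes (\tau)$, and $\varepsilon(\tau) = 0$, so $(\tau)$ is a Hopf ideal and $\frac{\mathcal{A}(n)_*}{\tau}$ inherits a Hopf algebra structure making $\pi$ a Hopf algebra map.

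Next I would identify the connecting data in the long exact sequence \labelcref{les} with the change-of-rings map along $\pi$. The coefficient rings match up: $\F_2[\tau]/\tau \cong \F_2$, and the restriction-of-scalars functor along $\pi$ sends the trivial $\frac{\mathcal{A}(n)_*}{\tau}$-comodule $\F_2$ to the trivial $\mathcal{A}(n)_*$-comodule $\F_2$. The map $i_*$ in \labelcref{les} is exactly the map $\Ext_{\mathcal{A}(n)_*}(\F_2[\tau], \F_2[\tau]) \to \Ext_{\mathcal{A}(n)_*}(\F_2[\tau], \F_2)$ induced by reduction mod $\tau$ on the second variable, which under the standard change-of-rings / Cartan–Eilenberg comparison agrees with the map $\Ext_{\mathcal{A}(n)_*}(\F_2,\F_2) \to \Ext_{\frac{\mathcal{A}(n)_*}{\tau}}(\F_2,\F_2)$ induced by $\pi$. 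Each of these is computed by the cobar complex, and the cobar differential together with the concatenation product is natural in the coefficient Hopf algebra; hence the induced map on $\Ext$ respects the Yoneda (equivalently cobar concatenation) product, i.e.\ is a ring map.

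The main obstacle — really the only point requiring care — is pinning down precisely which of several a priori different maps labelled $i_*$ one means, and checking they all coincide. Concretely, one must verify that the boundary map in the long exact sequence of $\Ext_{\mathcal{A}(n)_*}(\F_2[\tau],-)$ associated to $0 \to \F_2[\tau]\xrightarrow{\tau}\F_2[\tau]\to\F_2\to 0$ in the coefficient variable is the same as the change-of-rings map along $\pi$; this is a standard but slightly fiddly comparison of the two long exact sequences (one from a SES of comodule coefficients over a fixed Hopf algebra, one from the Hopf-algebra quotient) via a map of injective resolutions, or more cleanly via the fact that both are computed by the evident map of cobar complexes $C^\bullet(\mathcal{A}(n)_*) \to C^\bullet(\frac{\mathcal{A}(n)_*}{\tau})$ induced by $\pi$. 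Once that identification is in hand, multiplicativity is immediate since the cobar concatenation product is strictly preserved by a map of Hopf algebras. I would write this up by: (i) noting $(\tau)$ is a Hopf ideal; (ii) stating that $i_*$ is induced by $\pi$ on cobar complexes; (iii) observing the cobar product is natural, so $i_*$ is a ring map; and (iv) if desired, a one-line remark that this same map is the one appearing in \labelcref{les}.
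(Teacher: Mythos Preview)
Your proposal is correct and follows essentially the same approach as the paper: the paper's proof simply observes that the quotient $\mathcal{A}(n)_* \to \frac{\mathcal{A}(n)_*}{\tau}$ is a map of Hopf algebras, hence induces a map of cobar complexes preserving the concatenation product, so the induced map on cohomology is a ring map. You are more careful than the paper on one point---explicitly flagging the identification of the map $i_*$ appearing in the long exact sequence \labelcref{les} with the change-of-rings map along $\pi$---which the paper simply takes for granted; one small quibble is that since $\tau$ lies in the ground ring $\F_2[\tau]$ rather than in the augmentation ideal, it is perhaps cleaner to say that base-changing the Hopf algebra along $\F_2[\tau]\to\F_2$ automatically yields a Hopf algebra map, rather than checking that $(\tau)$ is a Hopf ideal via primitivity.
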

\begin{proof}
    The map $i_*: \mathcal{A}(n)_* \to \frac{\mathcal{A}(n)_*}{\tau}$ is a map of Hopf algebras so it commutes with coproducts. This means that $i_*$ induces a map of cochain complexes for the cobar complexes for $\mathcal{A}(n)_*$ and $\frac{\mathcal{A}(n)_*}{\tau}$ respectively. Futhermore the induced map on cobar complexes preserves the concatenation product so the map induced on cohomology is a ring map.
\end{proof}

\begin{remark}
    The map $q_*$ is an $\text{Ext}_{\mathcal{A}(n)_*}$-module map. This follows from the fact that $q_*$ is the connecting morphism in the long exact sequence of $\text{Ext}$ groups induced by a short exact sequence of $\mathcal{A}(n)_*$-modules. 
\end{remark}

We use the following notation consistent with the notation for the algebraic Novikov spectral sequence for the sphere in \cite{aNSS}. 
\begin{notation}\label{barnotation}
Let $x$ be in $\text{Ext}_{\mathcal{A}(n)_*}^{*, *, *}$. We denote an element of $\text{Ext}_{\frac{\mathcal{A}(n)_*}{\tau}}^{*, *, *}$ by $x$ if $i_*(x) = x$. We denote an element of $\text{Ext}_{\frac{\mathcal{A}(n)_*}{\tau}}^{*+1, * - 1, * - 1}$ by $\overline{x}$ if $q_*(\overline{x}) = x$.
\end{notation}

\begin{remark}
 The element $\overline{x}$ is well defined up to elements in the image of $i_*$ because $\im{i_*} = \ker{q_*}$. If $a$ and $b$ are elements of  $\text{Ext}_{\mathcal{A}(n)_*}$ which are annihilated by $\tau$, then it is not necessarily true that $a \cdot \overline{b} = \overline{a} \cdot b$ in $\text{Ext}_{\frac{\mathcal{A}(n)_*}{\tau}}$. This will only be true up to an element in the image of $i_*$. 
\end{remark}

\subsection{Motivic Spectra and $\tau$}

Let $X$ be a $2$-complete cellular $\C$-motivic spectrum such that $H_{\ast, \ast} X$ is $\tau$-torsion-free so $\tau$ induces a short exact sequence 
\begin{align}\label{ses}
    0 \to H_{\ast, \ast}X \xrightarrow{i_*} H_{\ast, \ast}(X/\tau) \xrightarrow{q_*} H_{\ast - 1, \ast + 1}(X) \to 0
\end{align}
on homology. 
\begin{notation}
    We denote by $\text{Ext}_{\mathcal{A}_*}(X)$ the $E_2$-page $\text{Ext}_{\mathcal{A}_*}(\F_2[\tau], H_{\ast, \ast} X)$ of the $\C$-motivic Adams spectral sequence for $X$.
\end{notation} 
The short exact sequence \labelcref{ses} gives us a sequence 

\begin{equation}\label{cc 1}
\begin{tikzcd}
	\cdots & {\text{Ext}_{\mathcal{A}_*}^{s, f, w}(X)} & {\text{Ext}_{\mathcal{A}_*}^{s, f, w}(X/\tau)} & {\text{Ext}_{\mathcal{A}*}^{s - 1, f + 1, w + 1}(X)} & \cdots \\
	\cdots & {\pi_{s, w}X} & {\pi_{s, w}(X/\tau)} & {\pi_{s - 1, w + 1}X} & \cdots
	\arrow["{i_*}", from=1-2, to=1-3]
	\arrow["{q_*}", from=1-3, to=1-4]
	\arrow[from=2-2, to=2-3]
	\arrow[Rightarrow, from=1-2, to=2-2]
	\arrow[Rightarrow, from=1-3, to=2-3]
	\arrow[from=2-3, to=2-4]
	\arrow[Rightarrow, from=1-4, to=2-4]
	\arrow[from=2-1, to=2-2]
	\arrow[from=1-1, to=1-2]
	\arrow[from=1-4, to=1-5]
	\arrow[from=2-4, to=2-5]
\end{tikzcd}
\end{equation}
of Adams spectral sequences in which the rows are exact. 

If $X$ is a ring, the maps $i_*$ and $q_*$ give us a pair of $\text{Ext}_{\mathcal{A}_*}(X)$-module maps 
\begin{align}\label{maps}
    &i_*: \text{Ext}_{\mathcal{A}_*}^{*, *, *}(X) \to \text{Ext}_{\mathcal{A}_*}^{*, *, *}(X/\tau) && q_*: \text{Ext}_{\mathcal{A}_*}^{*, *, *}(X/\tau) \to \text{Ext}_{\mathcal{A}_*}^{*-1, *+1, *+1}(X).
\end{align}
\begin{example}
In \cite{ko} a construction is given for a $\C$-motivic ring spectrum $\mathit{kq}$ whose Betti realization is the classical connective real K-theory spectrum $\mathit{ko}$ and has the property $H_{\ast, \ast} \mathit{kq} \cong \mathcal{A}_* \Box_{\mathcal{A}(1)_*} \F_2[\tau]$ \cite{ko}.    
\end{example}

 \begin{prop}
     The $E_2$-page of the Adams spectral sequence for $\mathit{kq}/\tau$ is isomorphic to $\text{Ext}_{\frac{\mathcal{A}(1)_*}{\tau}}$.
 \end{prop}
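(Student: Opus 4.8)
The plan is to identify the homology comodule $H_{\ast, \ast}(\mathit{kq}/\tau)$ explicitly and then run a change-of-rings argument, exactly parallel to the classical fact that the Adams $E_2$-page for $\mathit{ko}$ is $\text{Ext}_{\mathcal{A}(1)}(\F_2,\F_2)$. The first step is to check that \labelcref{ses} applies to $X=\mathit{kq}$. Since $H_{\ast,\ast}\mathit{kq}\cong \mathcal{A}_*\Box_{\mathcal{A}(1)_*}\F_2[\tau]$ is by definition of the cotensor product the kernel of a map out of $\mathcal{A}_*$, it is an $\F_2[\tau]$-submodule of $\mathcal{A}_*$; and $\mathcal{A}_*$ is a free $\F_2[\tau]$-module on its evident monomial basis, so $H_{\ast,\ast}\mathit{kq}$ is $\tau$-torsion-free. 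Thus \labelcref{ses} yields $H_{\ast,\ast}(\mathit{kq}/\tau)\cong (\mathcal{A}_*\Box_{\mathcal{A}(1)_*}\F_2[\tau])/\tau$.

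The second step is to identify this quotient with $\frac{\mathcal{A}_*}{\tau}\Box_{\frac{\mathcal{A}(1)_*}{\tau}}\F_2$, i.e. to show that the cotensor product commutes with reduction modulo $\tau$. Here I would invoke that $\mathcal{A}_*$ is cofree as an $\mathcal{A}(1)_*$-comodule (the motivic Milnor--Moore phenomenon: $\mathcal{A}(1)_*$ being a quotient Hopf algebra of $\mathcal{A}_*$, one has an isomorphism of left $\mathcal{A}(1)_*$-comodules $\mathcal{A}_*\cong(\mathcal{A}_*\Box_{\mathcal{A}(1)_*}\F_2[\tau])\otimes_{\F_2[\tau]}\mathcal{A}(1)_*$). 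Both tensor factors are free $\F_2[\tau]$-modules, so applying $-\otimes_{\F_2[\tau]}\F_2$ gives $\frac{\mathcal{A}_*}{\tau}\cong\big((\mathcal{A}_*\Box_{\mathcal{A}(1)_*}\F_2[\tau])/\tau\big)\otimes_{\F_2}\frac{\mathcal{A}(1)_*}{\tau}$ as $\frac{\mathcal{A}(1)_*}{\tau}$-comodules, which exhibits $\frac{\mathcal{A}_*}{\tau}$ as cofree over $\frac{\mathcal{A}(1)_*}{\tau}$ and reads off its co-invariants as $(\mathcal{A}_*\Box_{\mathcal{A}(1)_*}\F_2[\tau])/\tau\cong\frac{\mathcal{A}_*}{\tau}\Box_{\frac{\mathcal{A}(1)_*}{\tau}}\F_2$.

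The third step is pure homological algebra. Set $M=\frac{\mathcal{A}_*}{\tau}\Box_{\frac{\mathcal{A}(1)_*}{\tau}}\F_2$, a comodule on which $\tau$ acts as $0$. Because $M\otimes_{\F_2[\tau]}N\cong M\otimes_{\F_2}(N/\tau)$ whenever $\tau M=0$, and the counit coideal of $\mathcal{A}_*$ reduces mod $\tau$ to that of $\frac{\mathcal{A}_*}{\tau}$, the cobar complex computing $\text{Ext}_{\mathcal{A}_*}(\F_2[\tau],M)$ coincides term by term and differential by differential with the cobar complex computing $\text{Ext}_{\frac{\mathcal{A}_*}{\tau}}(\F_2,M)$, so these two $\text{Ext}$ groups agree; then change of rings for the cofree comodule $M$ over the ground field $\F_2$ gives $\text{Ext}_{\frac{\mathcal{A}_*}{\tau}}(\F_2,M)\cong\text{Ext}_{\frac{\mathcal{A}(1)_*}{\tau}}(\F_2,\F_2)=\text{Ext}_{\frac{\mathcal{A}(1)_*}{\tau}}$. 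Composing the isomorphisms of the three steps identifies the $E_2$-page $\text{Ext}_{\mathcal{A}_*}(\F_2[\tau],H_{\ast,\ast}(\mathit{kq}/\tau))$ with $\text{Ext}_{\frac{\mathcal{A}(1)_*}{\tau}}$.

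I expect the main obstacle to be the second step, namely guaranteeing that cotensoring with $\F_2[\tau]$ over $\mathcal{A}(1)_*$ is compatible with $-\otimes_{\F_2[\tau]}\F_2$. The cleanest input is the cofreeness of $\mathcal{A}_*$ over $\mathcal{A}(1)_*$ used above, which is exactly the fact underlying the ordinary change-of-rings computation of $\text{Ext}_{\mathcal{A}_*}(\mathit{kq})$ and should be citable; alternatively one can argue directly that the cokernel of the coaction map $\mathcal{A}_*\to\mathcal{A}_*\otimes_{\F_2[\tau]}\mathcal{A}(1)_*$ in the equalizer presentation of the cotensor product is $\tau$-torsion-free, so that $-\otimes_{\F_2[\tau]}\F_2$ is exact on the relevant two-term complex. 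A third route, bypassing the comodule identification altogether, is to compare the long exact sequence \labelcref{cc 1} for $X=\mathit{kq}$ with the long exact sequence \labelcref{les} for $n=1$ via the isomorphism $\text{Ext}_{\mathcal{A}_*}(\mathit{kq})\cong\text{Ext}_{\mathcal{A}(1)_*}$ and conclude with the five lemma, but establishing naturality of that comparison costs essentially the same effort.
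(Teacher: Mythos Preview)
Your proposal is correct and follows essentially the same route as the paper: establish that $H_{\ast,\ast}\mathit{kq}$ is $\tau$-free so that \labelcref{ses} applies, identify $H_{\ast,\ast}(\mathit{kq}/\tau)$ as an induced comodule, and then invoke change of rings. The paper's proof is considerably terser---it simply asserts the identification $H_{\ast,\ast}(\mathit{kq}/\tau)\cong \mathcal{A}_*\Box_{\frac{\mathcal{A}(1)_*}{\tau}}\F_2[\tau]$ and cites ``a change of rings theorem'' without further comment---whereas you flesh out both the compatibility of cotensor with reduction mod $\tau$ (via Milnor--Moore cofreeness) and the intermediate passage through $\text{Ext}_{\frac{\mathcal{A}_*}{\tau}}$; your formulation of the homology as $\frac{\mathcal{A}_*}{\tau}\Box_{\frac{\mathcal{A}(1)_*}{\tau}}\F_2$ is equivalent to the paper's and arguably cleaner.
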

 \begin{proof}
    Since $H_{\ast, \ast} \mathit{kq}$ is $\tau$-free, the map $\tau$ induces a short exact sequence
\begin{align*}
    0 \to H_{\ast, \ast} \mathit{kq} \to H_{\ast, \ast} \mathit{kq} \to H_{\ast, \ast}(\mathit{kq}/\tau) \to 0
\end{align*}
that gives us $H_{\ast, \ast}(\mathit{kq}/\tau) \cong \mathcal{A}_* \Box_{\frac{\mathcal{A}(1)_*}{\tau}} \F_2[\tau]$.
A change of rings theorem then gives
\begin{align*}
    \text{Ext}_{\mathcal{A}_*}(\mathit{kq}/\tau) \cong \text{Ext}_{\frac{\mathcal{A}(1)_*}{\tau}}
\end{align*}
which is the $E_2$ page of the Adams spectral sequence for $\mathit{kq}/\tau$.
 \end{proof}

\begin{example}
    In \cite{mmf} a construction is given for a $\C$-motivic ring spectrum $\mathit{mmf}$ whose Betti realization is the classical connective topological modular forms spectrum $\mathit{tmf}$ and has the property $H_{\ast, \ast} \mathit{mmf} \cong \mathcal{A}_* \Box_{\mathcal{A}(2)_*} \F_2[\tau]$.
\end{example}
 \begin{prop}
     The $E_2$-page of the Adams spectral sequence for $\mathit{mmf}/\tau$ is isomorphic to $\text{Ext}_{\frac{\mathcal{A}(2)_*}{\tau}}$.
 \end{prop}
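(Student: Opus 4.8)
The plan is to mimic the proof of the preceding proposition for $\mathit{kq}/\tau$, replacing $\mathcal{A}(1)_*$ throughout by $\mathcal{A}(2)_*$. First I would observe that $H_{\ast,\ast}\mathit{mmf} \cong \mathcal{A}_* \Box_{\mathcal{A}(2)_*} \F_2[\tau]$ is $\tau$-torsion-free: it is the kernel of a map of free $\F_2[\tau]$-modules (the difference of the two structure maps defining the cotensor product), hence a submodule of a free module over the principal ideal domain $\F_2[\tau]$, and therefore has no $\tau$-torsion. Multiplication by $\tau$ then yields a short exact sequence
\[
0 \longrightarrow H_{\ast,\ast}\mathit{mmf} \xrightarrow{\ \tau\ } H_{\ast,\ast}\mathit{mmf} \longrightarrow H_{\ast,\ast}(\mathit{mmf}/\tau) \longrightarrow 0
\]
of $\mathcal{A}_*$-comodules, identifying $H_{\ast,\ast}(\mathit{mmf}/\tau)$ with the cokernel $\bigl(\mathcal{A}_* \Box_{\mathcal{A}(2)_*} \F_2[\tau]\bigr)/\tau$.

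Next I would identify this cokernel with $\mathcal{A}_* \Box_{\frac{\mathcal{A}(2)_*}{\tau}} \F_2[\tau]$. The key input is that $\mathcal{A}_*$ is coflat over the quotient Hopf algebra $\mathcal{A}(2)_*$ (indeed it is a cofree $\mathcal{A}(2)_*$-comodule), so the functor $\mathcal{A}_* \Box_{\mathcal{A}(2)_*}(-)$ is exact; applying it to the short exact sequence $0 \to \F_2[\tau] \xrightarrow{\tau} \F_2[\tau] \to \F_2 \to 0$ of $\mathcal{A}(2)_*$-comodules commutes the quotient-by-$\tau$ past the cotensor product. Combined with the fact that $(\tau)$ is a Hopf ideal — so that $\frac{\mathcal{A}(2)_*}{\tau}$ is again a quotient Hopf algebra of $\mathcal{A}_*$ and $\F_2[\tau]$ remains a comodule over it — this gives the desired identification $\bigl(\mathcal{A}_* \Box_{\mathcal{A}(2)_*} \F_2[\tau]\bigr)/\tau \cong \mathcal{A}_* \Box_{\frac{\mathcal{A}(2)_*}{\tau}} \F_2[\tau]$.

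Finally, the change-of-rings theorem for the quotient Hopf algebra $\frac{\mathcal{A}(2)_*}{\tau}$ of $\mathcal{A}_*$ gives
\[
\text{Ext}_{\mathcal{A}_*}(\mathit{mmf}/\tau) \;=\; \text{Ext}_{\mathcal{A}_*}\!\bigl(\F_2[\tau],\, \mathcal{A}_* \Box_{\frac{\mathcal{A}(2)_*}{\tau}} \F_2[\tau]\bigr) \;\cong\; \text{Ext}_{\frac{\mathcal{A}(2)_*}{\tau}},
\]
which is by definition the $E_2$-page of the $\C$-motivic Adams spectral sequence for $\mathit{mmf}/\tau$. I expect the only real subtlety to be the middle step: one must check that the coflatness of $\mathcal{A}_*$ over $\mathcal{A}(2)_*$ is genuinely available in the $\F_2[\tau]$-linear setting, and that reducing mod $\tau$ leaves the relevant comodule structures intact, so that the change-of-rings isomorphism applies verbatim. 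Everything else is formal and runs exactly parallel to the $\mathit{kq}/\tau$ argument.
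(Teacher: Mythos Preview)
Your proposal is correct and follows essentially the same route as the paper's proof: use $\tau$-freeness of $H_{\ast,\ast}\mathit{mmf}$ to get the short exact sequence, identify $H_{\ast,\ast}(\mathit{mmf}/\tau)$ with $\mathcal{A}_* \Box_{\frac{\mathcal{A}(2)_*}{\tau}} \F_2[\tau]$, and then invoke change of rings. The paper's argument is terser and does not spell out the coflatness or $\tau$-freeness justifications you supply, but the logical skeleton is identical.
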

 \begin{proof}
    Since $H_{\ast, \ast} \mathit{mmf}$ is $\tau$-free, the map $\tau$ induces a short exact sequence
\begin{align*}
    0 \to H_{\ast, \ast} \mathit{mmf} \to H_{\ast, \ast} \mathit{mmf} \to H_{\ast, \ast}(\mathit{mmf}/\tau) \to 0
\end{align*}
that gives us $H_{\ast, \ast}(\mathit{mmf}/\tau) \cong \mathcal{A}_* \Box_{\frac{\mathcal{A}(2)_*}{\tau}} \F_2[\tau]$.
A change of rings theorem then gives
\begin{align*}
    \text{Ext}_{\mathcal{A}_*}(\mathit{mmf}/\tau) \cong \text{Ext}_{\frac{\mathcal{A}(2)_*}{\tau}}
\end{align*}
which is the $E_2$ page of the Adams spectral sequence for $\mathit{mmf}/\tau$.
 \end{proof}

\subsection{May Spectral Sequence}\label{MSS}
Much of our work depends on comparing related computations via maps of spectral sequences. The May filtration is given by filtering the cobar complexes for $\mathcal{A}(n)_*$ and $\frac{\mathcal{A}(n)_*}{\tau}$ by their respective augmentation ideals. The associated spectral sequences are the May spectral sequences computing $\text{Ext}_{\mathcal{A}(n)_*}$ and $\text{Ext}_{\frac{\mathcal{A}(n)_*}{\tau}}$ respectively. 
\begin{prop}\label{2.2}
     Let $0 \leq n \leq \infty$. The reduction modulo $\tau$ map $\mathcal{A}(n)_* \to \frac{\mathcal{A}(n)_*}{\tau}$ induces a map of May spectral sequences.
\end{prop}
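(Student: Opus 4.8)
The plan is to realize the reduction map as a filtration-preserving morphism between the two filtered cochain complexes whose associated graded spectral sequences are, by definition, the May spectral sequences, and then to invoke the elementary fact that a filtration-preserving morphism of filtered complexes induces a morphism of the associated spectral sequences. So I first recall the setup: write $\overline{\mathcal{A}(n)_*} = \ker\bigl(\mathcal{A}(n)_* \to \F_2[\tau]\bigr)$ and $\overline{\frac{\mathcal{A}(n)_*}{\tau}} = \ker\bigl(\frac{\mathcal{A}(n)_*}{\tau} \to \F_2\bigr)$ for the augmentation ideals. The reduced cobar complex computing $\text{Ext}_{\mathcal{A}(n)_*}$ has $s$-cochains $\overline{\mathcal{A}(n)_*}^{\otimes_{\F_2[\tau]} s}$ with the coproduct-induced differential, and similarly the cobar complex over $\F_2$ with $s$-cochains $\overline{\frac{\mathcal{A}(n)_*}{\tau}}^{\otimes_{\F_2} s}$ computes $\text{Ext}_{\frac{\mathcal{A}(n)_*}{\tau}}$; the May filtration places a tensor of augmentation-ideal elements in filtration equal to the sum of the augmentation-adic valuations of its factors, and the associated spectral sequence is the May spectral sequence.

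The one structural input I would record is that $\mathcal{A}(n)_*$ is free as an $\F_2[\tau]$-module: one reads off an explicit monomial basis from the presentation, using the relations $\tau_i^2 = \tau \xi_{i+1}$ to put powers of the $\tau_i$ into normal form, and the argument is uniform in $0 \le n \le \infty$. Consequently $- \otimes_{\F_2[\tau]} \F_2$ is exact, so reduction modulo $\tau$ carries $\overline{\mathcal{A}(n)_*}$ onto $\overline{\frac{\mathcal{A}(n)_*}{\tau}}$ and induces isomorphisms $\overline{\mathcal{A}(n)_*}^{\otimes_{\F_2[\tau]} s} \otimes_{\F_2[\tau]} \F_2 \cong \overline{\frac{\mathcal{A}(n)_*}{\tau}}^{\otimes_{\F_2} s}$. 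Composing the projection to the quotient by $\tau$ with these isomorphisms gives, in each cochain degree, an $\F_2$-linear map $C^\bullet(\mathcal{A}(n)_*) \to C^\bullet\!\bigl(\tfrac{\mathcal{A}(n)_*}{\tau}\bigr)$; it commutes with the cobar differentials because reduction modulo $\tau$ is a map of coaugmented coalgebras (the coproduct on $\frac{\mathcal{A}(n)_*}{\tau}$ is literally the mod-$\tau$ reduction of the coproduct on $\mathcal{A}(n)_*$). On cohomology this chain map induces the map $i_*$ appearing in \labelcref{les}.

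It remains to check that this chain map is filtration-preserving for the May filtrations, which is immediate: if $x \in \overline{\mathcal{A}(n)_*}$ lies in the $k$-th power of the augmentation ideal it is a sum of $k$-fold products of augmentation-ideal elements, hence so is its image, which therefore lies in the $k$-th power of $\overline{\frac{\mathcal{A}(n)_*}{\tau}}$, and the same estimate holds tensor factor by tensor factor. Since a filtration-preserving morphism of filtered cochain complexes induces a morphism of the associated spectral sequences, compatible with all differentials and with the induced maps on abutments, this produces the asserted map of May spectral sequences. The only point requiring any care — and the place where a careless argument could slip — is the commutation of the base change $\F_2[\tau] \to \F_2$ with the formation of the cobar complex and of the augmentation-ideal filtration; this is precisely what the freeness of $\mathcal{A}(n)_*$ over $\F_2[\tau]$ guarantees, and everything else is formal.
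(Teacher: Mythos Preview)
Your proof is correct and follows essentially the same approach as the paper: show that reduction modulo $\tau$ is a map of Hopf algebras, hence induces a chain map on cobar complexes, and that it preserves the augmentation-ideal filtration, hence induces a map of May spectral sequences. The paper's argument is more terse and does not discuss freeness over $\F_2[\tau]$; your added care on that point is correct but not strictly needed, since one only requires a well-defined map (not an isomorphism) on each tensor power, and this follows directly from functoriality of the tensor product along the ring map $\F_2[\tau] \to \F_2$.
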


\begin{proof}
    The reduction modulo $\tau$ map $\mathcal{A}(n)_* \to \frac{\mathcal{A}(n)_*}{\tau}$ is a map of Hopf algebras so it commutes with coproducts. This means that the reduction modulo $\tau$ map induces a map of cochain complexes for the cobar complexes for $\mathcal{A}(n)_*$ and $\frac{\mathcal{A}(n)_*}{\tau}$ respectively. Furthermore this map preserves augmentation ideals so it preserves the May filtration on $\mathcal{A}(n)_*$ and $\frac{\mathcal{A}(n)_*}{\tau}$ respectively. We conclude that the reduction modulo $\tau$ map induces a filtration-preserving map of filtered cochain complexes and therefore induces a map of May spectral sequences that commutes with differentials. 
\end{proof}
Let $\mathcal{A}(n)_*^{cl}$ denote the quotient of the classical dual Steenrod algebra which is dual to the subalgebra generated by $Sq^1, Sq^2, ..., Sq^{2^n}$. We have 
\begin{align}
    \mathcal{A}(n)_*[\tau^{-1}] \cong \mathcal{A}(n)_*^{cl}[\tau, \tau^{-1}]
\end{align}
so $\tau$-localization gives us a comparison between the $\C$-motivic dual Steenrod algebra and the classical dual Steenrod algebra.
\begin{prop}\label{2.3}
    The $\tau$-localization map $\mathcal{A}(n)_* \to \mathcal{A}(n)_*^{cl}[\tau, \tau^{-1}]$ induces a map of May spectral sequences.
\end{prop}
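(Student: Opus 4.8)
The plan is to mirror the argument of Proposition \labelcref{2.2}, replacing "reduction modulo $\tau$" with "inversion of $\tau$" and checking that the cobar-level map is still compatible with the May filtration. First I would observe that the localization map $\mathcal{A}(n)_* \to \mathcal{A}(n)_*^{cl}[\tau, \tau^{-1}]$, obtained by inverting the central element $\tau$ and using the identification $\mathcal{A}(n)_*[\tau^{-1}] \cong \mathcal{A}(n)_*^{cl}[\tau,\tau^{-1}]$ displayed above, is a map of Hopf algebroids over the ground ring: $\tau$ is a permanent cycle (indeed a unit after localization) and localizing at a central element commutes with the coproduct, the unit, the counit, and the antipode. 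Consequently, applying the cobar construction degreewise yields a map of cochain complexes $\Omega^\bullet(\mathcal{A}(n)_*) \to \Omega^\bullet(\mathcal{A}(n)_*^{cl}[\tau,\tau^{-1}])$ which, as in Lemma \labelcref{iring}, also preserves the concatenation product.

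Next I would address the filtration. The May filtration on each cobar complex is the filtration by powers of the augmentation ideal $\overline{\mathcal{A}(n)_*}$ in each tensor factor. The subtlety — and this is the one point genuinely different from Proposition \labelcref{2.2} — is that inverting $\tau$ enlarges the ring and one must check that the localized cobar complex still carries a May filtration compatible with the source. The right way to set this up is to note that $\mathcal{A}(n)_*^{cl}[\tau,\tau^{-1}]$ is filtered by declaring $\tau$ to have May weight $0$ (equivalently, filtration quotient by the ideal generated by the positive-degree generators $\tau_i, \xi_i$, which are exactly the images of the corresponding elements of $\mathcal{A}(n)_*$), and that under the isomorphism $\mathcal{A}(n)_*[\tau^{-1}]\cong \mathcal{A}(n)_*^{cl}[\tau,\tau^{-1}]$ this agrees with the localization of the May filtration on $\mathcal{A}(n)_*$. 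Since localization is exact, the associated graded of the localized filtration is the localization of the associated graded, so the filtration on the target is exhaustive and Hausdorff in each cobar degree and the localization map is filtration-preserving. Therefore it induces a map of the associated May spectral sequences commuting with differentials, exactly as in the proof of Proposition \labelcref{2.2}.

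The main obstacle I anticipate is purely bookkeeping rather than conceptual: one must be careful that "the May filtration" on the $\tau$-inverted Hopf algebra is the one in which $\tau$ has filtration degree zero, not the naive filtration one might write down on a polynomial ring in $\tau$ and $\tau^{-1}$, and that with this convention the filtration really is the localization of the source filtration and remains well-behaved (exhaustive, complete, and finite in each internal degree after fixing a stem) so that the spectral sequence converges and the comparison map is defined on every page. Once that identification of filtrations is pinned down, the conclusion follows formally: a filtered map of filtered cochain complexes induces a map of the associated spectral sequences compatible with all differentials and the multiplicative structure, which is the assertion of the proposition.
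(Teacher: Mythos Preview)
Your proposal is correct and follows essentially the same approach as the paper: observe that $\tau$-localization is a map of Hopf algebras, hence induces a map of cobar complexes, and that it preserves the augmentation-ideal (May) filtration, so it induces a map of spectral sequences. Your discussion is in fact more careful than the paper's, which simply asserts that the map preserves augmentation ideals without pausing over the convention that $\tau$ has May weight zero or the well-definedness of the filtration on the localized target.
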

\begin{proof}
    The $\tau$-localization map $\mathcal{A}(n)_* \to \mathcal{A}(n)_*^{cl}[\tau, \tau^{-1}]$ is a map of Hopf algebras so it commutes with coproducts. This means that the $\tau$-localization map induces a map of cochain complexes for the cobar complexes for $\mathcal{A}(n)_*$ and $\mathcal{A}(n)_*^{cl}[\tau, \tau^{-1}]$ respectively. Furthermore this map preserves augmentation ideals so it preserves the May filtration on $\mathcal{A}(n)_*$ and $\mathcal{A}(n)_*^{cl}[\tau, \tau^{-1}]$ respectively. We conclude that the $\tau$-localization map induces a filtration-preserving map of filtered cochain complexes and therefore induces a map of May spectral sequences that commutes with differentials. 
\end{proof}

\section{Computations over \texorpdfstring{$\mathcal{A}(1)_*$}{Lg}}\label{sec3}
We demonstrate our techniques by computing the algebraic Novikov spectral sequence for connective real $K$-theory. We compute the May spectral sequence for $\frac{\mathcal{A}(1)_*}{\tau}$ and analyze the inclusion and projection maps $i_*$ and $q_*$ in \labelcref{les}. The input data for this computation is knowledge of the May spectral sequence for $\mathcal{A}(1)_*$. The cohomology $\text{Ext}_{\mathcal{A}(1)_*}$ of $\mathcal{A}(1)_*$ was computed in \cite{hill} and \cite{ko}. For completeness, we compute $\text{Ext}_{\mathcal{A}(1)_*}$ using a May spectral sequence.

\begin{prop}[\cite{hill}, \cite{ko}]\label{3.1}
    The generators and relations for $\text{Ext}_{\mathcal{A}(1)_*}$ are given in Tables \ref{a} and \ref{b}.
    \end{prop}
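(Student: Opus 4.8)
The plan is to run the $\C$-motivic May spectral sequence for $\mathcal{A}(1)_*$ from scratch and read off the answer recorded in Tables \ref{a} and \ref{b}. First I would compute the associated graded Hopf algebra of $\mathcal{A}(1)_*$ with respect to the May filtration (filtering by powers of the augmentation ideal). Since $\mathcal{A}(1)_*$ is a finite-dimensional exterior-and-polynomial algebra on $\tau_0, \tau_1, \xi_1$ modulo the relations $\tau_0^2 + \tau\xi_1 = \xi_1^2 = \tau_1^2 = 0$, passing to the associated graded separates the relation $\tau_0^2 = \tau\xi_1$ into $\tau_0^2 = 0$ in lowest May filtration, leaving a primitively generated exterior algebra (with the extra $\xi_1$ a square-zero generator). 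The $E_1$-page of the May spectral sequence is then a polynomial algebra on the cobar classes $h_0 = [\tau_0]$, $h_1 = [\xi_1]$ (or $[\tau_1^{2^0}]$-type classes), together with the Massey-product classes $a = \langle h_0, h_1, h_0 \rangle$-type generator and $v = [\xi_1^2]$-type class coming from the higher filtration pieces; I would list the precise polynomial generators with their $(s, f, w, \text{May})$-degrees.

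Next I would compute the May differentials. The key input is the coproduct formulas $\Delta(\tau_i)$ and $\Delta(\xi_i)$ restricted to $\mathcal{A}(1)_*$, from which the leading $d_1$ and $d_2$ differentials are extracted by the usual Leibniz-rule bookkeeping on the cobar complex; one expects a differential of the form $d(b) = h_0^2 v + h_1^3$-type relation (analogous to the classical $d_2(b_{20}) = h_0^2 h_2 + h_1^3$ truncated to $\mathcal{A}(1)$) that produces the relations among $h_0, h_1, a, v$ listed in Table \ref{b}. Since $\mathcal{A}(1)_*$ is small, the spectral sequence collapses after finitely many pages, and I would verify the $E_\infty$-page is additively what is claimed, checking that there are no possible hidden multiplicative extensions by a dimension count or by comparing with the $\tau$-localized classical computation of Proposition \labelcref{2.3}, which recovers the well-known $\text{Ext}_{\mathcal{A}(1)^{cl}_*} = \F_2[h_0, h_1, a, v]/(h_0 h_1, h_1^3, h_1 a, a^2 - h_0^2 v)$.

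The main obstacle is not conceptual but organizational: correctly tracking the motivic weight grading through the May filtration, since the relation $\tau_0^2 = \tau\xi_1$ ties the weight to the May filtration in a way that has no classical analogue, and getting the weights of $a$ and $v$ and of the relations right requires care. A secondary subtlety is justifying that the May spectral sequence converges and that the named differentials are the only ones — I would handle this by the standard finiteness argument (the May $E_1$-page is finitely generated in each bidegree and the filtration is bounded) together with a sparseness/degree-count check in the low-dimensional range where all generators live. Since this is presented as a known result from \cite{hill} and \cite{ko}, included only "for completeness," I would keep the exposition brief and lean on those references for any residual ambiguity.
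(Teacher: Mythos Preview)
Your overall strategy --- run the May spectral sequence for $\mathcal{A}(1)_*$ and compare with the classical computation via $\tau$-localization --- matches the paper's approach. However, your description of the $E_1$-page and the differentials is muddled in ways that would derail the actual computation.

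First, the May $E_1$-page for $\mathcal{A}(1)_*$ is simply $\F_2[\tau][h_0, h_1, h_{20}]$, where $h_{20}$ is the cobar class of $\tau_1$. There is no class ``$v = [\xi_1^2]$'' because $\xi_1^2 = 0$ in $\mathcal{A}(1)_*$; likewise $a$ is not an $E_1$-generator but is detected on a later page by $h_0 h_{20}^2$, and $P$ by $h_{20}^4$. Your invocation of a differential ``analogous to $d_2(b_{20}) = h_0^2 h_2 + h_1^3$'' cannot be right as stated, since $h_2$ does not exist here.

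Second, and more importantly, the paper's argument hinges on a point you relegate to an ``organizational'' subtlety: the actual differentials are $d_1(h_{20}) = h_0 h_1$ (lifted from the classical case via the map of Proposition~\ref{2.3}) and $d_2(h_{20}^2) = \tau h_1^3$. The $\tau$ coefficient is forced precisely by the motivic weight grading --- classically $d_2(h_{20}^2) = h_1^3$, but $h_{20}^2$ has weight $2$ while $h_1^3$ has weight $3$, so the motivic differential must be $\tau h_1^3$. This is the entire content of the motivic computation over the classical one, and it is what produces the relation $\tau h_1^3 = 0$ in Table~\ref{b} rather than $h_1^3 = 0$. Once you have these two differentials the spectral sequence collapses at $E_3$ with no room for hidden extensions.
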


\begin{proof}
Filtering the cobar complex for $\mathcal{A}(1)_*$ by powers of its augmentation ideal gives us a May spectral sequence with $E_1$-page $\F_2[h_0, h_1, h_{20}]$. By Proposition \labelcref{2.3} we have that $\tau$-localization induces a map of May spectral sequences. Therefore the $d_1$-differentials in the May spectral sequence for $\C$-motivic $\mathcal{A}(1)_*$ must recover the classical May $d_1$-differentials for $\mathcal{A}(1)_*$ upon $\tau$-localization. For example we have $d_1(h_{20}) = h_0 h_1$ in the May spectral sequence for $\C$-motivic $\mathcal{A}(1)_*$ since there is an analogous classical May differential for $\mathcal{A}(1)_*$ \cite[Example 3.2.7]{Rav86}. All $d_1$-differentials are determined by $d_1(h_{20})$ via the Leibniz rule. 

The $d_2$-differentials are compatible with the $d_2$-differentials in the May spectral sequence for classical $\mathcal{A}(1)_*$ and must preserve motivic weight. We have a classical May differential $d_2(h_{20}^2) = h_1^3$. However the class $h_1^3$ has motivic weight 3 and $h_{20}^2$ has motivic weight 2. In order to balance motivic weights we must have $d_2(h_{20}^2) = \tau h_1^3$ in the May spectral sequence for $\C$-motivic $\mathcal{A}(1)_*$. 

The May spectral sequence collapses at $E_3$ and there are no hidden extensions.
\end{proof}

\begin{prop}
    The trigraded ring $\text{Ext}_{\frac{\mathcal{A}(1)_*}{\tau}}$ is given by
    \begin{align*}
        \text{Ext}_{\frac{\mathcal{A}(1)_*}{\tau}} \cong \frac{\F_2[h_0, h_1, \overline{h_1^3}]}{h_0 h_1}
    \end{align*}
    with 
     \begin{align*}
        &|h_0| = (0, 1, 0) && |h_1| = (1, 1, 1) &&|\overline{h_1^3}| = (4, 2, 2).
    \end{align*} 
\end{prop}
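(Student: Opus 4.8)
The plan is to compute $\text{Ext}_{\frac{\mathcal{A}(1)_*}{\tau}}$ with a $\C$-motivic May spectral sequence, exactly parallel to the proof of Proposition~\labelcref{3.1}. Filtering the cobar complex of $\frac{\mathcal{A}(1)_*}{\tau}=\F_2[\tau_0,\tau_1,\xi_1]/(\tau_0^2,\tau_1^2,\xi_1^2)$ by powers of its augmentation ideal, the associated graded Hopf algebra is primitively generated on the classes of $\tau_0,\xi_1,\tau_1$, so the $E_1$-page is the polynomial algebra $\F_2[h_0,h_1,h_{20}]$ with $h_0=[\tau_0]$, $h_1=[\xi_1]$, $h_{20}=[\tau_1]$ in tridegrees $(0,1,0)$, $(1,1,1)$, $(2,1,1)$. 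The reduction-mod-$\tau$ map fixes each of these classes, so Proposition~\labelcref{2.2} forces $d_1(h_{20})=h_0h_1$, exactly as in the May spectral sequence for $\mathcal{A}(1)_*$; equivalently this follows directly from $\Delta(\tau_1)=\tau_1\otimes 1+\xi_1\otimes\tau_0+1\otimes\tau_1$. Since $d_1$ is a derivation with $d_1(h_0)=d_1(h_1)=0$, a short computation gives $E_2=\F_2[h_0,h_1,h_{20}^2]/(h_0h_1)$.

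Next I would check that the spectral sequence collapses at $E_2$ with no hidden extensions. A useful bookkeeping observation is that each tridegree of $\F_2[h_0,h_1,h_{20}]$ contains at most one monomial, hence the same is true for every subquotient, and in particular every tridegree of each $E_r$-page is at most one-dimensional. A May differential out of $h_0$, $h_1$, or $h_{20}^2$ would land in tridegree $(-1,2,0)$, $(0,2,1)$, or $(3,3,2)$ respectively, and one checks that all three of these groups already vanish on $E_2$; since $E_2$ is generated as a ring by these three classes, all higher differentials vanish. The one-dimensionality of each tridegree also rules out hidden additive and multiplicative extensions, so $\text{Ext}_{\frac{\mathcal{A}(1)_*}{\tau}}\cong\F_2[h_0,h_1,h_{20}^2]/(h_0h_1)$ as trigraded rings, where the remaining generator $h_{20}^2$ has tridegree $(4,2,2)$.

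It remains to identify $h_{20}^2$ with the class $\overline{h_1^3}$ of Notation~\labelcref{barnotation}; this is the step that uses information beyond the bare ring structure, and is where I expect the only real content to lie. The May differential $d_2(h_{20}^2)=\tau h_1^3$ in the spectral sequence for $\mathcal{A}(1)_*$, established in the proof of Proposition~\labelcref{3.1}, shows that $\tau h_1^3=0$ in $\text{Ext}_{\mathcal{A}(1)_*}$, while $h_1^3$ is itself nonzero there. Hence $h_1^3\in\text{Ext}_{\mathcal{A}(1)_*}^{3,3,3}$ is annihilated by $\tau$, so by exactness of~\labelcref{les} it lies in the image of $q_*$: there is a nonzero class $\bar{x}\in\text{Ext}_{\frac{\mathcal{A}(1)_*}{\tau}}^{4,2,2}$ with $q_*(\bar{x})=h_1^3$. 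Since that group is the one-dimensional span of $h_{20}^2$, we get $\bar{x}=h_{20}^2$ and hence $q_*(h_{20}^2)=h_1^3$, i.e.\ $h_{20}^2=\overline{h_1^3}$. Substituting into the presentation above finishes the proof. The main obstacle is precisely this identification: the May spectral sequence alone produces the ring only up to naming its generators, and pinning the generator down as $\overline{h_1^3}$ requires feeding the $\mathcal{A}(1)_*$-differential $d_2(h_{20}^2)=\tau h_1^3$ into the long exact sequence~\labelcref{les}. Everything else is routine, given that every tridegree in sight is at most one-dimensional.
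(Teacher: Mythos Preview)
Your proof is correct and follows essentially the same May spectral sequence approach as the paper. The only minor differences are that the paper deduces $d_2(h_{20}^2)=0$ by pushing the differential $d_2(h_{20}^2)=\tau h_1^3$ forward along the mod-$\tau$ map of May spectral sequences (Proposition~\labelcref{2.2}) rather than by your direct tridegree check, and it defers the identification $h_{20}^2=\overline{h_1^3}$ to Proposition~\labelcref{Kindecomps} rather than including it here.
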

\begin{remark}
    Our notation for the element $\overline{h_1^3}$ is explained by Notation \labelcref{barnotation} and Proposition \labelcref{Kindecomps}.
\end{remark}
\begin{proof}
Filtering the cobar complex for $\frac{\mathcal{A}(1)_*}{\tau}$ by powers of its augmentation ideal gives us a May spectral sequence with $E_1$-page $\F_2[h_0, h_1, h_{20}]$. The $d_1$-differentials must be compatible with the $d_1$-differentials in Proposition \labelcref{3.1} since the reduction modulo $\tau$ map induces a map of May spectral sequences by Proposition \labelcref{2.2}. Since the differential $d_1(h_{20}) = h_0 h_1$ in the proof of Proposition \labelcref{3.1} does not involve multiplication by $\tau$, it has nontrivial image under the map of May spectral sequences induced by reduction modulo $\tau$. We then have a differential $d_1(h_{20}) = h_0 h_1$ in the May spectral sequence for $\frac{\mathcal{A}(1)_*}{\tau}$.  Likewise the $d_2$-differentials must be compatible with the $d_2$-differentials in Proposition \labelcref{3.1}. For example we have $d_2(h_{20}^2) = \tau h_1^3$ in Proposition \labelcref{3.1} so reduction modulo $\tau$ gives us  $d_2(h_{20}^2) = 0$ in the May spectral sequence for $\frac{\mathcal{A}(1)_*}{\tau}$. Therefore the May spectral sequence for $\frac{\mathcal{A}(1)_*}{\tau}$ collapses at $E_2$ and there are no hidden extensions.
\end{proof}

\begin{prop}\label{3.3}
    The values of the map $i_*: \text{Ext}_{\mathcal{A}(1)_*}^{\ast, \ast, \ast} \to \text{Ext}_{\frac{\mathcal{A}(1)_*}{\tau}}^{\ast, \ast, \ast}$ on the ring generators of $\text{Ext}_{\mathcal{A}(1)_*}$ are given in Table \ref{a}.
\end{prop}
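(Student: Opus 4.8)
I would start from the fact that $i_*$ is a ring homomorphism (Lemma~\labelcref{iring}), so it is determined by its values on the ring generators of $\text{Ext}_{\mathcal{A}(1)_*}$ listed in Table~\ref{a}: $\tau$, $h_0$, $h_1$, and the two classes in stems $4$ and $8$, which I will call $v$ and $w$. The element $\tau$ maps to $0$, since it lies in the image of multiplication by $\tau$, which by the long exact sequence~\labelcref{les} is exactly $\ker i_*$. For $h_0$ and $h_1$ I would invoke Proposition~\labelcref{2.2}: reduction modulo $\tau$ induces a map of May spectral sequences which is the identity on the common $E_1$-page $\F_2[h_0,h_1,h_{20}]$; the classes $h_0$ and $h_1$ are $d_1$-cycles and are not hit (the only $d_1$-differential over $\frac{\mathcal{A}(1)_*}{\tau}$ is $d_1(h_{20})=h_0h_1$, and that May spectral sequence collapses at $E_2$), so they survive to the like-named classes over $\frac{\mathcal{A}(1)_*}{\tau}$ and $i_*(h_0)=h_0$, $i_*(h_1)=h_1$ in the sense of Notation~\labelcref{barnotation}.

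The substance is $v$ and $w$. I would first check that neither is divisible by $\tau$ in $\text{Ext}_{\mathcal{A}(1)_*}$ — immediate from the $\F_2[\tau]$-module structure recorded in Proposition~\labelcref{3.1}, or from the fact that their May representatives, $h_0h_{20}^2$ and $h_{20}^4$ respectively (in the notation of the proof of Proposition~\labelcref{3.1}), do not involve $\tau$ — so that~\labelcref{les} forces $i_*(v)\neq 0$ and $i_*(w)\neq 0$. To pin the images down I would push these representatives forward along the map of May spectral sequences of Proposition~\labelcref{2.2}: in the target, $h_{20}^2$, $h_0h_{20}^2$, and $h_{20}^4$ are $d_1$-cycles that are not multiples of $h_0h_1$, hence survive to nonzero classes, and $h_{20}^2$ — the unique monomial of tridegree $(4,2,2)$ — detects the generator $\overline{h_1^3}$ of $\text{Ext}_{\frac{\mathcal{A}(1)_*}{\tau}}$ in that degree. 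This gives $i_*(v)=h_0\overline{h_1^3}$ and $i_*(w)=\overline{h_1^3}^2$, consistent with the relation $v^2=h_0^2w$ and with $\text{Ext}_{\frac{\mathcal{A}(1)_*}{\tau}}\cong\F_2[h_0,h_1,\overline{h_1^3}]/(h_0h_1)$.

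Once Proposition~\labelcref{3.1} and the ring structure of $\text{Ext}_{\frac{\mathcal{A}(1)_*}{\tau}}$ are in hand, the last step also admits a shorter finish by a pure monomial count: each relevant tridegree of $\F_2[h_0,h_1,\overline{h_1^3}]/(h_0h_1)$ is at most one-dimensional, so a nonzero value of $i_*$ on a generator is forced, and the May spectral sequence comparison is only a backstop for a tridegree containing two or more monomials (which does not occur here). Accordingly, \textbf{the main obstacle is the non-$\tau$-divisibility of $v$ and $w$}; with that established, the ring structure of the source and the explicit description of the target leave no remaining freedom, and supplying that non-divisibility is precisely what the May spectral sequence computation behind Proposition~\labelcref{3.1} is for.
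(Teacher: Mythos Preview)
Your argument is correct and matches the paper's approach: the paper also observes that the ring generators (called $a$ and $P$ there, your $v$ and $w$) are not $\tau$-divisible, hence map nontrivially under $i_*$, and then notes that each target tridegree in $\text{Ext}_{\frac{\mathcal{A}(1)_*}{\tau}}$ contains a unique nonzero class, forcing the values. Your May spectral sequence comparison via Proposition~\labelcref{2.2} is a valid extra justification, but as you yourself note in the final paragraph, the paper dispenses with it and goes straight to the monomial count---which is all that is needed here.
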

\begin{proof}
The values of $i_*$ are computed by inspection using \labelcref{les}. Specifically the kernel of $i_*$ consists of the $\tau$-divisible elements of $\text{Ext}_{\mathcal{A}(1)_*}$. The ring generators of $\text{Ext}_{\mathcal{A}(1)_*}$ are not $\tau$-divisible so they take nontrivial values under $i_*$. The particular values of $i_*$ on these generators are forced for degree reasons. For example, we have that $a$ has degree $(4, 3, 2)$ and the only element of $\text{Ext}_{\frac{\mathcal{A}(1)_*}{\tau}}$ with the same degree is $h_0 \mathord{\cdot} \overline{h_1^3}$ so we must have $i_*(a) = h_0 \mathord{\cdot} \overline{h_1^3}$. Similarly the degree of $P$ is $(8, 4, 4)$ and the only element of $\text{Ext}_{\frac{\mathcal{A}(1)_*}{\tau}}$ with the same degree is $\overline{h_1^3} \mathord{\cdot} \overline{h_1^3}$.
\end{proof}
 By Lemma \labelcref{iring} we have that $i_*$ is a ring homomorphism so Proposition \labelcref{3.3} totally determines the behavior of $i_*$ on $\text{Ext}_{\mathcal{A}(1)_*}$.

\begin{prop}
    The indecomposables of $\text{Ext}_{\frac{\mathcal{A}(1)_*}{\tau}}^{\ast, \ast, \ast}$ as an $\text{Ext}_{\mathcal{A}(1)_*}$-module are $1$ and $\overline{h_1^3}$.
\end{prop}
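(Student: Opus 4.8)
The plan is to show that, as an $\text{Ext}_{\mathcal{A}(1)_*}$-module, $\text{Ext}_{\frac{\mathcal{A}(1)_*}{\tau}}$ is generated by $1$ and $\overline{h_1^3}$, and that neither of these lies in the submodule generated by the other together with lower-dimensional elements. First I would recall the ring structure just established: $\text{Ext}_{\frac{\mathcal{A}(1)_*}{\tau}} \cong \F_2[h_0, h_1, \overline{h_1^3}]/(h_0 h_1)$, so additively it is spanned by monomials $h_0^a \overline{h_1^3}^c$ and $h_1^b \overline{h_1^3}^c$ with $a,b \geq 0$ and $c \geq 0$. The $\text{Ext}_{\mathcal{A}(1)_*}$-module structure is induced via the ring map $i_*$, whose values on generators are recorded in Table~\ref{a}; in particular $i_*(h_0) = h_0$, $i_*(h_1) = h_1$, $i_*(v) = 0$ for the generator $v$ in degree $(3,1,2)$ (since $v$ is $\tau$-divisible, as it equals $\tau$ times something — this is why $q_*$ produces $\overline{h_1^3}$), and $i_*(a) = h_0 \cdot \overline{h_1^3}$, $i_*(P) = \overline{h_1^3} \cdot \overline{h_1^3}$ as computed in Proposition~\ref{3.3}.

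The key computation is then: the $\text{Ext}_{\mathcal{A}(1)_*}$-submodule generated by $1$ is exactly $\im i_*$, which by the ring structure of $\text{Ext}_{\mathcal{A}(1)_*}$ and the values of $i_*$ contains all $h_0^a$, all $h_1^b$, the element $h_0 \cdot \overline{h_1^3}$ (from $a$), the element $\overline{h_1^3}^2$ (from $P$), and all products thereof — hence every monomial $h_0^a \overline{h_1^3}^c$ with $a \geq 1$ or $c$ even, and every $h_1^b \overline{h_1^3}^{2k}$. What is \emph{not} hit is precisely $\overline{h_1^3}^{2k+1}$ (for $k \geq 0$) and $h_1^b \overline{h_1^3}^{2k+1}$ for $b \geq 1$. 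Multiplying the single extra generator $\overline{h_1^3}$ by the submodule generated by $1$ recovers all of these: $\overline{h_1^3} \cdot h_1^b \overline{h_1^3}^{2k} = h_1^b \overline{h_1^3}^{2k+1}$ and $\overline{h_1^3} \cdot \overline{h_1^3}^{2k} = \overline{h_1^3}^{2k+1}$. So $\{1, \overline{h_1^3}\}$ generates the whole module, and no proper subset does: $1$ is not in $\im i_*$ shifted up — actually $1$ is indecomposable since $\text{Ext}_{\mathcal{A}(1)_*}$ acts through $i_*$ and $1 = i_*(1)$ but $1$ is not in the image of multiplication by any positive-degree element of $\text{Ext}_{\mathcal{A}(1)_*}$ on lower-degree classes (there are none in negative degree); and $\overline{h_1^3}$, in degree $(4,2,2)$, cannot be written as $x \cdot 1$ with $x \in \text{Ext}_{\mathcal{A}(1)_*}$ because $i_*$ vanishes on $\tau$-divisibles and the only non-$\tau$-divisible element of $\text{Ext}_{\mathcal{A}(1)_*}$ in degree $(4,2,2)$ would have to map to $\overline{h_1^3}$, contradicting $\overline{h_1^3} \notin \im i_*$.

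I would organize the proof as: (1) list the $\F_2$-basis of $\text{Ext}_{\frac{\mathcal{A}(1)_*}{\tau}}$ from the ring presentation; (2) identify $\im i_*$ explicitly using Proposition~\ref{3.3} and the ring structure of $\text{Ext}_{\mathcal{A}(1)_*}$ from Tables~\ref{a} and \ref{b}, observing it is the span of the "even" monomials described above; (3) check $\overline{h_1^3} \cdot \im i_*$ together with $\im i_*$ exhausts the basis; (4) verify minimality by a degree-count argument showing $\overline{h_1^3}$ is not decomposable. The main obstacle I anticipate is step (2): pinning down $\im i_*$ precisely requires knowing the full ring structure of $\text{Ext}_{\mathcal{A}(1)_*}$ (the well-known pattern with generators $h_0, h_1, v, a, P$ and relations $h_0 h_1 = 0$, $h_1 v = 0$, $v^2 = h_0^2 \cdot (\text{something})$ etc.) and carefully tracking which products of generators are $\tau$-divisible — equivalently, sit in $\ker i_* = \tau \cdot \text{Ext}_{\mathcal{A}(1)_*}$ — versus which survive. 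Once that bookkeeping is done the module-generation claim is immediate from the multiplicative relations.
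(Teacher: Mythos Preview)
Your approach is essentially the paper's: identify $\im i_*$ as the submodule generated by $1$, observe that the complement consists exactly of the monomials $h_1^b\,\overline{h_1^3}^{2k+1}$, and note these are all $\overline{h_1^3}$ times something in $\im i_*$ (since $h_1 = i_*(h_1)$ and $\overline{h_1^3}^2 = i_*(P)$). The paper's proof is the same argument in two sentences.

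One correction: there is no generator ``$v$'' in degree $(3,1,2)$ in $\text{Ext}_{\mathcal{A}(1)_*}$. The ring generators (Table~\ref{a}) are only $h_0, h_1, a, P$; nothing is $\tau$-divisible among the indecomposables, and the reason $q_*(\overline{h_1^3}) = h_1^3$ is that $h_1^3$ is $\tau$-\emph{torsion} (from the relation $\tau h_1^3 = 0$ in Table~\ref{b}), not that anything is $\tau$-divisible. Drop the parenthetical about $v$ entirely --- the rest of your bookkeeping using $i_*(a) = h_0\cdot\overline{h_1^3}$ and $i_*(P) = \overline{h_1^3}^2$ is correct and is all you need.
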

\begin{proof}
    Observe that the map $i_*$ equips $\text{Ext}_{\frac{\mathcal{A}(1)_*}{\tau}}$ with an $\text{Ext}_{\mathcal{A}(1)_*}$-module structure such that $1$ is the only $\text{Ext}_{\mathcal{A}(1)_*}$-module indecomposable in the image of $i_*$. By Proposition \labelcref{3.3} we have that the only elements of $\text{Ext}_{\frac{\mathcal{A}(1)_*}{\tau}}$ that are not in the image of $i_*$ are of the form $h_1^\ell \cdot \overline{h_1^3}^{2k + 1}$. Since $h_1$ and $\overline{h_1^3}^2$ are in the image of $i_*$ the only $\text{Ext}_{\mathcal{A}(1)_*}$-module indecomposables of $\text{Ext}_{\frac{\mathcal{A}(1)_*}{\tau}}$ are $1$ and $\overline{h_1^3}$. 
\end{proof}

\begin{prop}\label{Kindecomps}
    The values of the map $q_*: \text{Ext}_{\frac{\mathcal{A}(1)_*}{\tau}}^{\ast, \ast, \ast} \to \text{Ext}_{\mathcal{A}(1)_*}^{*-1, *+1, *+1}$ on the $\text{Ext}_{\mathcal{A}(1)_*}$-module indecomposables of $\text{Ext}_{\frac{\mathcal{A}(1)_*}{\tau}}$ are given by
    \begin{align*}
        &q_*(1) = 0 && q_*(\overline{h_1^3}) = h_1^3.
    \end{align*}
\end{prop}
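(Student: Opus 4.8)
The plan is to read off both values directly from the long exact sequence \labelcref{les} (with $n=1$), using exactness together with the degree bookkeeping already established in the previous two propositions. For $q_*(1) = 0$ there is nothing to compute: since $i_*$ is a ring map by Lemma \labelcref{iring}, it carries the unit to the unit, so $1 = i_*(1)$ lies in $\im i_* = \ker q_*$ by exactness of \labelcref{les}. (Alternatively, $q_*(1)$ would have to lie in $\text{Ext}_{\mathcal{A}(1)_*}^{-1,1,1}$, which vanishes by connectivity.)

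For $q_*(\overline{h_1^3}) = h_1^3$ the first step is to match source and target degrees. The class $\overline{h_1^3}$ has tridegree $(4,2,2)$, so by \labelcref{les} its image under $q_*$ lies in $\text{Ext}_{\mathcal{A}(1)_*}^{3,3,3}$, which is precisely the tridegree of $h_1^3$. Next I would invoke the proof of Proposition \labelcref{3.1}: the motivic May differential $d_2(h_{20}^2) = \tau h_1^3$ (as opposed to the classical $d_2(h_{20}^2) = h_1^3$) shows both that $\tau h_1^3 = 0$ and that $h_1^3 \neq 0$ in $\text{Ext}_{\mathcal{A}(1)_*}$. By exactness of \labelcref{les}, $\ker\!\big(\tau\colon \text{Ext}_{\mathcal{A}(1)_*}^{3,3,3}\to \text{Ext}_{\mathcal{A}(1)_*}^{3,3,2}\big) = \im\!\big(q_*\colon \text{Ext}_{\frac{\mathcal{A}(1)_*}{\tau}}^{4,2,2}\to \text{Ext}_{\mathcal{A}(1)_*}^{3,3,3}\big)$, so $h_1^3$ lies in the image of $q_*$.

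It then remains to identify the preimage. A short monomial count in $\F_2[h_0,h_1,\overline{h_1^3}]/(h_0h_1)$ using $|h_0|=(0,1,0)$, $|h_1|=(1,1,1)$, $|\overline{h_1^3}|=(4,2,2)$ shows that $\text{Ext}_{\frac{\mathcal{A}(1)_*}{\tau}}^{4,2,2}$ is one-dimensional, spanned by $\overline{h_1^3}$. Since $q_*$ restricted to this line has $h_1^3\neq 0$ in its image, it is injective there, and therefore $q_*(\overline{h_1^3}) = h_1^3$. This also retroactively justifies the name $\overline{h_1^3}$ from the previous proposition via Notation \labelcref{barnotation}.

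I expect the only genuinely delicate point to be the nonvanishing of $h_1^3$ in $\text{Ext}_{\mathcal{A}(1)_*}$: this is what forces $q_*(\overline{h_1^3})$ to be nonzero rather than possibly $0$, and it hinges on the distinction between the motivic differential $d_2(h_{20}^2)=\tau h_1^3$ and its classical shadow — exactly the input recorded in the proof of Proposition \labelcref{3.1}. Everything else is exactness of \labelcref{les} and a degree count.
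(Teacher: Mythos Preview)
Your proof is correct and follows essentially the same approach as the paper: both argue via exactness of \labelcref{les} that $q_*$ surjects onto the $\tau$-torsion of $\text{Ext}_{\mathcal{A}(1)_*}$, observe that $h_1^3$ is $\tau$-torsion, and then pin down the preimage by a degree count. You have simply filled in more detail than the paper does --- in particular, the explicit treatment of $q_*(1)=0$ and the justification that $h_1^3 \neq 0$ via the May differential --- but the underlying argument is the same.
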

\begin{proof}
The values of $q_*$ are computed by inspecting the long exact sequence \labelcref{les}. Specifically $q_*$ surjects onto the $\tau$-torsion in $\text{Ext}_{\mathcal{A}(1)_*}$. Since $h_1^3$ is $\tau$-torsion, it is in the image of $q_*$. The only possibility is that $q_*(\overline{h_1^3}) = h_1^3$.
\end{proof}
Since $q_*$ is an $\text{Ext}_{\mathcal{A}(1)_*}$-module map Proposition \labelcref{Kindecomps} completely determines the behavior of $q_*$ on $\text{Ext}_{\frac{\mathcal{A}(1)_*}{\tau}}$. 

Figure \labelcref{kochart} displays $\text{Ext}_{\mathcal{A}(1)_*}$. The chart in Figure \labelcref{kochart} is graded so that the horizontal axis corresponds to the stem and the vertical axis corresponds to Adams filtration. Vertical lines denote multiplication by $h_0$ and diagonal lines of slope $1$ denote multiplication by $h_1$. A vertical arrow denotes an infinite sequence of multiplication by $h_0$. An arrow of slope 1 denotes an infinite sequence of multiplication by $h_1$. A class in Figure \labelcref{kochart} is colored gray if it is $\tau$-periodic and is colored red if it is $\tau$-torsion. 
\begin{remark}
Recall that $\text{Ext}_{\mathcal{A}(1)_*}$ is isomorphic to the $E_2$-page of the Adams spectral sequence for $\mathit{kq}$. There is no room for Adams differentials in the Adams spectral sequence for $\mathit{kq}$ or for hidden extensions on the $E_\infty$-page.
\end{remark}
Figure \labelcref{komtchart} displays $\text{Ext}_{\frac{\mathcal{A}(1)_*}{\tau}}$. As with Figure \labelcref{kochart}, the horizontal axis corresponds to stem and the vertical axis corresponds to Adams filtration. A class in Figure \labelcref{komtchart} is colored gray if it is in the image of $i_*$ and is colored red if it takes a nontrivial value under $q_*$. An extension is colored green if it is an extension from a class with nontrivial value under $q_*$ to a class in the image of $i_*$. Since $i_*(P) = \overline{h_1^3}^2$, we denote the element $\overline{h_1^3}^2$ of $\text{Ext}_{\frac{\mathcal{A}(1)_*}{\tau}}$ by $P$ as in Notation \labelcref{barnotation}.
\begin{remark}
    Recall that $\text{Ext}_{\frac{\mathcal{A}(1)_*}{\tau}}$ is isomorphic to the $E_2$-page of the Adams spectral sequence for $\mathit{kq}/\tau$. There is no room for Adams differentials in the Adams spectral sequence for $\mathit{kq}/\tau$ or for hidden extensions on the $E_\infty$-page.
\end{remark}

  \begin{table}[hbt!]
\centering
\caption{$\text{Ext}_{\mathcal{A}(1)_*}$ generators}
\begin{tabular}{l l l l} 
 \toprule
 $(s, f, w)$ & gen. & May & $i_*(-)$ \\ [0.5ex] 
 \hline
  (0, 1, 0) & $h_0$ & $h_0$& $h_0$   \\ 
 (1, 1, 1) & $h_1$ & $h_1$& $h_1$ \\ 
 (4, 3, 2) & $a$ & $h_0 h_{20}^2$ & $h_0 \mathord{\cdot} \overline{h_1^3}$\\
 (8, 4, 4) & $P$ & $h_{20}^4$& $\overline{h_1^3}^2$\\
 \bottomrule
\end{tabular}
\label{a}
\end{table}

\begin{table}[hbt!]
\centering
\caption{$\text{Ext}_{\mathcal{A}(1)_*}$ relations}
\begin{tabular}{l l} 
 \toprule
  $(s, f, w)$ & relation\\ [0.5ex] 
 \hline
   (1, 2, 1) & $h_0 h_1$\\ (3, 3, 2) & $\tau h_1^3$\\ (5, 4, 3) & $h_1 a$ \\ (8, 6, 4) & $a^2 \mathord{+} h_0^2 P$ \\\bottomrule
\end{tabular}
\label{b}
\end{table}
\newpage
\begin{figure}
		\begin{tikzpicture}
		[scale=0.83,
		>=stealth,
		label distance=0,
		label position=below,
		every label/.style={
			inner sep=0,
			scale=0.60},
		every path/.style={thick},
		dot/.style={circle,
			inner sep=0,
			minimum size=0.09cm},
		tau0/.style={
			draw={gray},
			fill={gray}},
		tau1/.style={
			draw={red},
			fill={red}},
		tau2/.style={
			draw={darkgreen},
			fill={darkgreen}},
		tau3/.style={
			draw={amber},
			fill={amber}},
		tau4/.style={
			draw={purple},
			fill={purple}},
		tauextn/.style={draw={red}},
		tau2extn/.style={draw={orange}},
		tau3extn/.style={draw={darkgreen}},
		tau4extn/.style={draw={orange}},
		tau5extn/.style={draw={orange}},
		tau6extn/.style={draw={orange}},
		cl/.style={->},
		mot/.style={draw={darkcyan}},
		tower/.style={->},
            tsq/.style={rectangle, 
			inner sep=0,
			minimum size=0.10cm,
			scale=1.0},
		h0/.style={draw={gray}},
		h1/.style={draw={gray}},
		h2/.style={draw={gray}},
		d1/.style={draw={blue}},
		d2/.style={draw={darkcyan}},
		d3/.style={draw={red}},
		d4/.style={draw={darkgreen}},
		d5/.style={draw={blue}},
		d6/.style={draw={orange}},
		d7/.style={draw={orange}},
		d9/.style={draw={orange}},
		h0tower/.style={draw={gray}, tower},
		h1tower/.style={draw={gray}, tower},
        th0tower/.style={draw={red}, tower},
        th1tower/.style={draw={red}, tower},
        t2h0tower/.style={draw={darkgreen}, tower}, 
        t2h1tower/.style={draw={darkgreen}, tower},
        t3h0tower/.style={draw={amber}, tower}, 
        t3h1tower/.style={draw={amber}, tower}]
			
		\draw[h0, tau0] (0.0,0) -- (0.0,1);
		\draw[h1, tau0] (0.0,0) -- (1.0,1);
		\node[dot,tau0, label=$ $] at (0.0,0) {};
		\draw[h0tower] (0.0,1) -- +(0.0,0.7);
		\node[dot, tau0, label=135:$ h_0$] at (0.0,1) {};
		\draw[h1, tau0] (1.0,1) -- (2.0,2);
		\node[dot, tau0, label=135:$ h_1$] at (1.0,1) {};
		\draw[h1, tau1] (2.0,2) -- (3.0,3);
		\node[dot, tau0, label=$ $] at (2.0,2) {};
		\draw[th1tower] (3.0,3) -- +(0.7,0.7);
		\node[dot, tau1, label=$ $] at (3.0,3) {};
		\draw[h0tower] (4.0,3) -- +(0.0,0.7);
		\node[dot, tau0, label=$ a$] at (4.0,3) {};
		\draw[h0, tau0] (8.0,4) -- (8.0,5);
		\draw[h1, tau0] (8.0,4) -- (9.0,5);
		\node[dot, tau0, label=$ P$] at (8.0,4) {};
		\draw[h0tower] (8.0,5) -- +(0.0,0.7);
		\node[dot, tau0] at (8.0,5) {};
		\draw[h1, tau0] (9.0,5) -- (10.0,6);
		\node[dot, tau0] at (9.0,5) {};
		\draw[h1, tau1] (10.0,6) -- (11.0,7);
		\node[dot, tau0, label=$ $] at (10.0,6) {};
		\draw[th1tower] (11.0,7) -- +(0.7,0.7);
		\node[dot, tau1, label=$ $] at (11.0,7) {};
		\draw[h0tower] (12.0,7) -- +(0.0,0.7);
		\node[dot, tau0, label=$ P a$] at (12.0,7) {};
	\end{tikzpicture}
\caption{$\text{Ext}_{\mathcal{A}(1)_*}$\label{kochart}}
\end{figure}
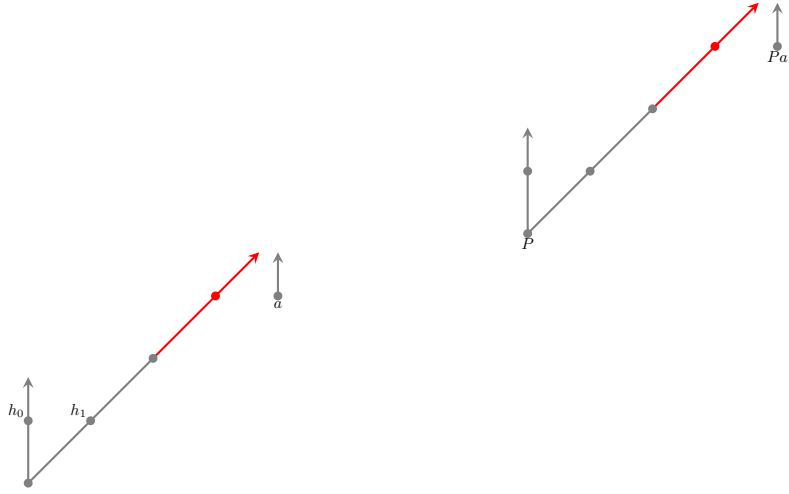

\begin{figure}
		\begin{tikzpicture}
		[scale=0.83,
		>=stealth,
		label distance=0,
		label position=below,
		every label/.style={
			inner sep=0,
			scale=0.60},
		every path/.style={thick},
		dot/.style={circle,
			inner sep=0,
			minimum size=0.09cm},
		tau0/.style={
			draw={gray},
			fill={gray}},
		tau1/.style={
			draw={red},
			fill={red}},
		tau2/.style={
			draw={darkgreen},
			fill={darkgreen}},
		tau3/.style={
			draw={amber},
			fill={amber}},
		tau4/.style={
			draw={purple},
			fill={purple}},
		tauextn/.style={draw={red}},
		tau2extn/.style={draw={orange}},
		tau3extn/.style={draw={darkgreen}},
		tau4extn/.style={draw={orange}},
		tau5extn/.style={draw={orange}},
		tau6extn/.style={draw={orange}},
		cl/.style={->},
		mot/.style={draw={darkcyan}},
		tower/.style={->},
		h0/.style={draw={gray}},
		h1/.style={draw={gray}},
		h2/.style={draw={gray}},
		d1/.style={draw={blue}},
		d2/.style={draw={darkcyan}},
		d3/.style={draw={red}},
		d4/.style={draw={darkgreen}},
		d5/.style={draw={blue}},
		d6/.style={draw={orange}},
		d7/.style={draw={orange}},
		d9/.style={draw={orange}},
		h0tower/.style={draw={gray}, tower},
		h1tower/.style={draw={gray}, tower},
        th0tower/.style={draw={red}, tower},
        th1tower/.style={draw={red}, tower},
        t2h0tower/.style={draw={darkgreen}, tower}, 
        t2h1tower/.style={draw={darkgreen}, tower},
        t3h0tower/.style={draw={amber}, tower}, 
        t3h1tower/.style={draw={amber}, tower}]

		\draw[h0, tau0] (0.0,0) -- (0.0,1);
		\draw[h1tower] (0.0,0) -- +(0.7,0.7);
		\node[dot, tau0, label=$ $] at (0.0,0) {};
		\draw[h0tower] (0.0,1) -- +(0.0,0.7);
		\node[dot, tau0, label=135:$ h_0$] at (0.0,1) {};
		\draw[h0, tau3extn] (4.0,2) -- (4.0,3);
		\draw[th1tower] (4.0,2) -- +(0.7,0.7);
		\node[dot, tau1, label=$ \overline{h_1^3}$] at (4.0,2) {};
		\draw[h0tower] (4.0,3) -- +(0.0,0.7);
		\node[dot, tau0, label=135:$ a$] at (4.0,3) {};
		\draw[h0, tau0] (8.0,4) -- (8.0,5);
		\draw[h1tower] (8.0,4) -- +(0.7,0.7);
		\node[dot, tau0, label=$ P$] at (8.0,4) {};
		\draw[h0tower] (8.0,5) -- +(0.0,0.7);
		\node[dot, tau0] at (8.0,5) {};
		\draw[h0, tau3extn] (12.0,6) -- (12.0,7);
		\draw[th1tower] (12.0,6) -- +(0.7,0.7);
		\node[dot, tau1, label=$ P \overline{h_{1}^3}$] at (12.0,6) {};
		\draw[h0tower] (12.0,7) -- +(0.0,0.7);
		\node[dot, tau0, label=135:$ P a$] at (12.0,7) {};
	\end{tikzpicture}
	\caption{$\text{Ext}_{\frac{\mathcal{A}(1)_*}{\tau}}$\label{komtchart}}
\end{figure}
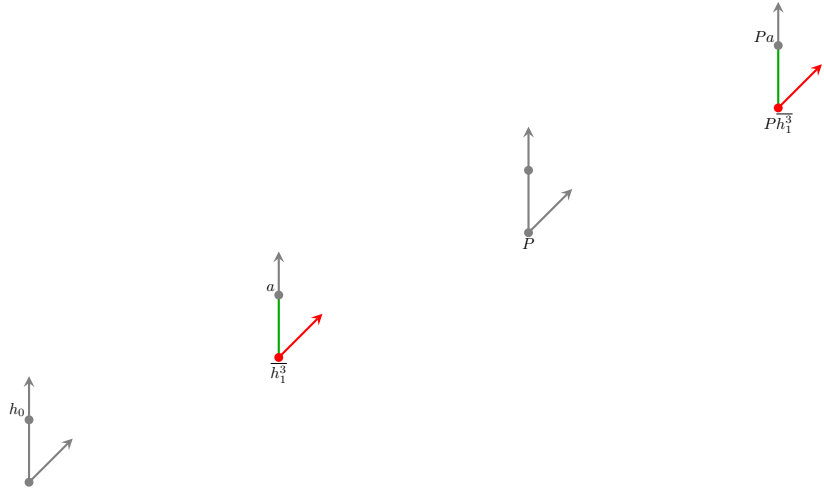

\newpage
\section{The Motivic May Spectral Sequence for \texorpdfstring{$\frac{\mathcal{A}(2)_*}{\tau}$}{Lg}}\label{sec4}

The $E_2$-page of the Adams spectral sequence for $\mathit{mmf}/\tau$ is isomorphic to $\text{Ext}_{\frac{\mathcal{A}(2)_*}{\tau}}$. In this section we compute $\text{Ext}_{\frac{\mathcal{A}(2)_*}{\tau}}$ as a ring using the May spectral sequence of Section \labelcref{MSS}.

\begin{prop}\label{4.1}
    The generators and $d_1$-differentials for the motivic May spectral sequence for $\frac{\mathcal{A}(2)_*}{\tau}$ are given in Table \ref{2}.
\end{prop}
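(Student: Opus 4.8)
The plan is to run the motivic May spectral sequence for $\frac{\mathcal{A}(2)_*}{\tau}$ obtained by filtering its cobar complex by powers of the augmentation ideal, exactly as in the $\frac{\mathcal{A}(1)_*}{\tau}$ computation of Section \labelcref{sec3}. First I would identify the $E_1$-page. The associated graded of $\frac{\mathcal{A}(2)_*}{\tau}$ with respect to the augmentation-ideal filtration is a primitively generated Hopf algebra, and it is the reduction modulo $\tau$ of the corresponding associated graded of $\mathcal{A}(2)_*$: the relations $\tau_i^2 = \tau\xi_{i+1}$ raise May filtration, so they become $\tau_i^2 = 0$ in the associated graded of either Hopf algebra, and the two associated gradeds differ only in having ground ring $\F_2[\tau]$ versus $\F_2$. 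Since the associated graded of $\mathcal{A}(2)_*$ is free over $\F_2[\tau]$ and its cohomology — the May $E_1$-page for $\mathcal{A}(2)_*$, computed in \cite{Isa09} — is a free polynomial $\F_2[\tau]$-algebra, reduction modulo $\tau$ commutes with passage to cohomology, and the $E_1$-page for $\frac{\mathcal{A}(2)_*}{\tau}$ is the corresponding polynomial $\F_2$-algebra on the classes dual to the six generators $\bar\tau_0, \bar\tau_1, \bar\tau_2, \bar\xi_1, \bar\xi_1^2, \bar\xi_2$ of the associated graded, the list of surviving $\bar\xi$-powers being dictated by the truncations $\xi_1^4 = \xi_2^2 = 0$. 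Recording these generators together with their internal degrees (stem, homological degree, motivic weight, and May weight) produces the generator portion of Table \ref{2}.

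Next I would compute the $d_1$-differentials; it suffices to do this on the polynomial generators, since the Leibniz rule propagates $d_1$ to all of $E_1$. On $E_1$ the differential $d_1$ is the cobar differential of the associated graded Hopf algebra, i.e.\ it is induced by the leading non-primitive part of the coproduct of $\frac{\mathcal{A}(2)_*}{\tau}$: a generator dual to $\bar\xi_i$ contributes the familiar sum of products read off from $\Delta'(\xi_i) = \sum_{0<k<i}\xi_{i-k}^{2^k}\otimes\xi_k$, a generator dual to $\bar\tau_i$ (for $i\geq 1$) contributes the sum read off from $\Delta'(\tau_i) = \sum_{k<i}\xi_{i-k}^{2^k}\otimes\tau_k$ (this is the source of the differential $d_1(h_{20}) = h_0 h_1$ in the $\mathcal{A}(1)_*$ case of Proposition \labelcref{3.1}), and the generators dual to $\bar\tau_0$, $\bar\xi_1$, and $\bar\xi_1^2$ are $d_1$-cycles. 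Crucially none of these formulas involves $\tau$: by Proposition \labelcref{2.2}, reduction modulo $\tau$ is a map out of the May spectral sequence for $\mathcal{A}(2)_*$, in which — just as for $\mathcal{A}(1)_*$ in Proposition \labelcref{3.1}, where $\tau$ first intervenes only at $d_2$ — every $d_1$ is already $\tau$-free and hence survives the reduction unchanged. By Proposition \labelcref{2.3}, inverting $\tau$ recovers the classical May spectral sequence for $\mathcal{A}(2)$ \cite[Section 3.2]{Rav86}, providing an independent consistency check of every $d_1$. This fills in the $d_1$-column of Table \ref{2}.

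I expect the main obstacle to be bookkeeping rather than conceptual difficulty: the argument is the $\frac{\mathcal{A}(1)_*}{\tau}$ computation with a longer generator list. The real work is to assign every generator the correct multidegree so that the target of each putative $d_1$ is forced, to confirm the generator list is complete — that the truncations $\xi_1^4 = \xi_2^2 = \tau_2^2 = 0$ delete exactly the classes they should and introduce no spurious ones — and to check that the comparison maps of Propositions \labelcref{2.2} and \labelcref{2.3} determine, rather than merely constrain, each differential. I would organize all of this by transporting Isaksen's May spectral sequence data for $\mathcal{A}(2)_*$ from \cite{Isa09} across reduction modulo $\tau$ term by term.
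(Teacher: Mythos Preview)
Your proposal is correct and essentially matches the paper's approach: the paper observes directly that the associated graded of $\frac{\mathcal{A}(2)_*}{\tau}$ is primitively generated, invokes \cite[Lemma 3.1.9]{Rav86} to get $E_1 \cong \F_2[h_0, h_1, h_2, h_{20}, h_{21}, h_{30}]$, and then reads off each $d_1$ straight from the reduced coproduct formulas for $\tau_0, \xi_1, \xi_1^2, \tau_1, \xi_2, \tau_2$ in the cobar complex. You add an extra layer of comparison with Isaksen's $\mathcal{A}(2)_*$ computation via Proposition \labelcref{2.2}, which is valid but unnecessary here since the coproduct formulas contain no $\tau$ and can be read off directly---the paper saves the comparison argument for Proposition \labelcref{4.2}, where the $d_2$-differentials genuinely differ from the $\mathcal{A}(2)_*$ case.
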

\begin{proof}
Computing the coproduct on the indecomposables of $\frac{\mathcal{A}(2)_*}{\tau}$ shows that $\text{Gr} \frac{\mathcal{A}(2)_*}{\tau}$ is primitively generated. Since $\Ext$ over a primitively generated flat graded commutative Hopf algebra is polynomial on its generators \cite[Lemma 3.1.9]{Rav86}, we have 
\begin{align*}
    E_1 = \text{Ext}_{\text{Gr} \frac{\mathcal{A}(2)_*}{\tau}} \cong \F_2[h_0, h_1, h_2, h_{20}, h_{21}, h_{30}].
\end{align*}

The $d_1$ differentials are all derived from the reduced coproduct $\overline{\Delta}$ for the $\C$-motivic dual Steenrod algebra. We have 
\begin{align*}
    &\overline{\Delta}(\tau_0) = 0 &&\overline{\Delta}(\xi_1) = 0 && \overline{\Delta}(\xi_1^2) = 0 \\
    &\overline{\Delta}(\tau_1) = \xi_1 \otimes \tau_0 && \overline{\Delta}(\xi_2) = \xi_1^2 \otimes \xi_1 && \overline{\Delta}(\tau_2) = \xi_2 \otimes \tau_0 + \xi_1^2 \otimes \tau_1.
\end{align*}
The definition of the differentials in the cochain complex for the cobar complex of $\mathcal{A}_*$ gives
\begin{align*}
&d_1(h_{0}) = 0 &&d_1(h_{1}) = 0 &&d_1(h_{2}) = 0\\
    &d_1(h_{20}) = h_0 h_1 &&d_1(h_{21}) = h_1 h_2 &&d_1(h_{30}) = h_0 h_{21} + h_2 h_{20}.
\end{align*}
\end{proof}

\begin{prop}\label{4.2}
The $E_2$-page of the May spectral sequence for $\frac{\mathcal{A}(2)_*}{\tau}$ is given by the generators and relations in Table \ref{table:1} and Table \ref{table:2}. The $d_2$-differentials are given in the fourth column of Table \ref{table:1}. 
\end{prop}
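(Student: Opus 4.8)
The plan is to run the May spectral sequence for $\frac{\mathcal{A}(2)_*}{\tau}$ through the $d_2$ differentials, with the guiding principle that everything must be compatible with the two comparison maps of May spectral sequences established earlier: the reduction map $\mathcal{A}(2)_* \to \frac{\mathcal{A}(2)_*}{\tau}$ (Proposition \labelcref{2.2}) and the $\tau$-localization map (Proposition \labelcref{2.3}). First I would take the $E_1$-page $\F_2[h_0, h_1, h_2, h_{20}, h_{21}, h_{30}]$ and compute the $E_2$-page as the cohomology of $d_1$, where $d_1(h_{20}) = h_0h_1$, $d_1(h_{21}) = h_1h_2$, $d_1(h_{30}) = h_0h_{21} + h_2h_{20}$, and $d_1$ vanishes on $h_0, h_1, h_2$ (Proposition \labelcref{4.1}). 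This is a Koszul-type computation: the $d_1$-cohomology is generated by $h_0, h_1, h_2$, the products $b_{20} = h_{20}^2$, $b_{21} = h_{21}^2$, $b_{30} = h_{30}^2$, and the ``boundary-straddling'' classes such as $h_0 h_{30} + h_{20} h_{21}$ and $h_2 h_{30} + h_{21}^2$ (or whichever combinations survive), together with relations like $h_0 h_1 = 0$, $h_1 h_2 = 0$, and the Koszul syzygies among the generators. The upshot is the generators-and-relations presentation recorded in Tables \ref{table:1} and \ref{table:2}; I would verify this list is closed under multiplication and that the relations are exactly the $d_1$-boundaries plus the evident syzygies, checking tridegrees against the tables.

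Next I would determine the $d_2$-differentials. The key input is Proposition \labelcref{2.3}: the classical May $d_2$-differentials for $\mathcal{A}(2)^{cl}_*$ are known (e.g. $d_2(b_{20}) = h_1^3 + h_0^2 h_2$, $d_2(b_{21}) = \ldots$, $d_2(h_0 h_{30} + h_{20}h_{21}) = \ldots$ in Tangora/Isaksen's notation), and after $\tau$-inverting the motivic $d_2$-differentials must recover these. Simultaneously, Proposition \labelcref{2.2} forces compatibility with the $d_2$-differentials in the May spectral sequence for $\mathcal{A}(2)_*$ itself: those differentials are known from \cite{Isa09}, and any that do not involve a factor of $\tau$ map to the analogous $d_2$ in our spectral sequence, while any that carry a factor of $\tau$ become zero after reduction mod $\tau$. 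So the recipe is: take each motivic $\mathcal{A}(2)_*$ May $d_2$-differential, set $\tau = 0$, and read off the result. For example, if $d_2(b_{20}^2) = \tau(\ldots)$ motivically over $\mathcal{A}(2)_*$ (forced by weight-balancing as in the $\mathcal{A}(1)$ case), then it becomes $d_2(b_{20}^2) = 0$ over $\frac{\mathcal{A}(2)_*}{\tau}$; whereas $d_2$'s with no $\tau$ survive. I would tabulate the value of $d_2$ on each generator this way, filling in the fourth column of Table \ref{table:1}, and then extend by the Leibniz rule, being careful because $\frac{\mathcal{A}(2)_*}{\tau}$ is not polynomial so the Leibniz rule must be applied within the given presentation.

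The main obstacle I anticipate is twofold. The computational bookkeeping obstacle is that the $d_1$-cohomology over $\frac{\mathcal{A}(2)_*}{\tau}$ is genuinely larger than over $\mathcal{A}(2)_*$ — reducing mod $\tau$ destroys relations of the form $\tau x = (\text{something})$, so classes that were decomposable or that died against a $\tau$-multiple now persist, and one must carefully identify all the new ``$\overline{x}$''-type generators (in the sense of Notation \labelcref{barnotation}) and the new relations among them. The conceptual obstacle is pinning down which motivic $\mathcal{A}(2)_*$ May differentials actually carry a factor of $\tau$: weight considerations (the motivic weight of the source and target must agree, and $\tau$ has weight $-1$) resolve most cases uniquely, exactly as in the proof of Proposition \labelcref{3.1} where $d_2(h_{20}^2) = \tau h_1^3$ was forced, but there may be a handful of differentials where weight alone does not decide the coefficient and one must appeal to $\tau$-localization (Proposition \labelcref{2.3}) and the known classical answer, or to the ring structure, to break the tie. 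I would handle these case by case, and once the $d_2$'s are nailed down the claimed $E_2$-presentation follows by taking $d_2$-cohomology; a dimension count against the known rank of $\text{Ext}_{\frac{\mathcal{A}(2)_*}{\tau}}$ in low stems (cross-checked via the long exact sequence \labelcref{les} and the known $\text{Ext}_{\mathcal{A}(2)_*}$) serves as a consistency check that nothing has been missed.
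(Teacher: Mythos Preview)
Your approach is correct and matches the paper's: compute the $E_2$-page as the $d_1$-cohomology using Proposition \labelcref{4.1}, then determine the $d_2$-differentials by transporting the known motivic $\mathcal{A}(2)_*$ May $d_2$-differentials from \cite{Isa09} along the reduction-mod-$\tau$ map (Proposition \labelcref{2.2}), setting $\tau = 0$. The paper's proof is more streamlined than your outline suggests---it simply cites \cite{Isa09} for the motivic $d_2$'s (e.g.\ $d_2(b_{20}) = \tau h_1^3 + h_0^2 h_2$) and reduces mod $\tau$, so the ``obstacles'' you anticipate (identifying which differentials carry a $\tau$, new $\overline{x}$-type generators at the $E_2$ level) do not arise: the $E_1$-pages for $\mathcal{A}(2)_*$ and $\frac{\mathcal{A}(2)_*}{\tau}$ differ only by the presence of $\tau$, the $d_1$'s involve no $\tau$, and the motivic $d_2$'s are already tabulated in \cite{Isa09}.
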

\begin{proof}
The $E_2$-page of the May spectral sequence for $\frac{\mathcal{A}(2)_*}{\tau}$ is computed using the generators and $d_1$-differentials from Proposition \labelcref{4.1}. Recall from Section \labelcref{MSS} that reduction modulo $\tau$ induces a map of May spectral sequences, so the $d_2$-differentials in the May spectral sequence for $\frac{\mathcal{A}(2)_*}{\tau}$ must be compatible with the $d_2$-differentials in the motivic May spectral sequence for $\mathcal{A}(2)_*$. For example, we have 
\begin{align*}
    d_2(b_{20}) = \tau h_1^3 + h_0^2 h_2
\end{align*}
in the May spectral sequence for $\mathcal{A}(2)_*$ \cite{Isa09}. Therefore we have
\begin{align*}
    d_2(b_{20}) = h_0^2 h_2
\end{align*}
in the May spectral sequence for $\frac{\mathcal{A}(2)_*}{\tau}$.
\end{proof}

\begin{prop}\label{maygens}
$\text{Ext}_{\frac{\mathcal{A}(2)_*}{\tau}}$ has generators and relations given in Table \ref{3} and Table \ref{4}. 
\end{prop}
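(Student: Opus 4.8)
The statement asserts that we can read off a complete set of ring generators and relations for $\text{Ext}_{\frac{\mathcal{A}(2)_*}{\tau}}$ from Tables \ref{3} and \ref{4}. By Proposition \labelcref{4.2} we already possess the $E_2$-page of the May spectral sequence for $\frac{\mathcal{A}(2)_*}{\tau}$ as a ring, together with all of its $d_2$-differentials; so the remaining work is to determine the higher differentials, identify the $E_\infty$-page, and then resolve the hidden multiplicative extensions in the May filtration to recover the actual ring structure of $\text{Ext}_{\frac{\mathcal{A}(2)_*}{\tau}}$.

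First I would form $E_3$ by taking $d_2$-homology of the ring described in Proposition \labelcref{4.2}, a finite computation that can be organized degree by degree in the trigrading (stem, Adams filtration, motivic weight). For the higher differentials the key tool is Proposition \labelcref{2.2}: reduction modulo $\tau$ induces a map of May spectral sequences $\mathcal{A}(2)_* \to \frac{\mathcal{A}(2)_*}{\tau}$, and on $E_1$-pages this is the quotient by the ideal generated by $\tau$ (the associated graded Hopf algebras agree after inverting filtration), hence surjective; by multiplicativity it stays surjective on every page. Isaksen's computation of $\text{Ext}_{\mathcal{A}(2)_*}$ records all higher May differentials for $\mathcal{A}(2)_*$, and reducing each of them modulo $\tau$ — that is, setting $\tau = 0$ — yields the corresponding differential for $\frac{\mathcal{A}(2)_*}{\tau}$. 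In particular a purely $\tau$-divisible differential $d_r(x) = \tau y$ downstairs becomes $d_r(x) = 0$ upstairs, which is precisely what allows new classes $\overline{x}$ to survive to $E_\infty$. Surjectivity of the reduction map guarantees these are all the differentials, so we obtain $E_\infty$. As a consistency check I would compare the resulting $\F_2$-dimension in each trigraded degree against the prediction of the long exact sequence \labelcref{les}, i.e.\ against $\dim \cok(\tau) + \dim \ker(\tau)$ computed from Isaksen's $\text{Ext}_{\mathcal{A}(2)_*}$.

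With $E_\infty$ in hand I would resolve the hidden extensions using the same comparison maps and the long exact sequence. Generators and products lying in the image of $i_*$ are handled by Lemma \labelcref{iring}: since $i_*$ is a ring homomorphism, any relation among such classes is the image of the corresponding relation in $\text{Ext}_{\mathcal{A}(2)_*}$, which is known. For the classes $\overline{x}$ of Notation \labelcref{barnotation} one uses that $q_*$ is an $\text{Ext}_{\mathcal{A}(2)_*}$-module map: from $q_*(a \cdot \overline{b}) = a \cdot q_*(\overline{b}) = a \cdot b$ together with $\ker q_* = \im i_*$ one reads off each product $a \cdot \overline{b}$ up to a class in the image of $i_*$, and the residual ambiguity is then removed by sparseness of the relevant trigraded piece or by naturality with respect to the $\tau$-localization map of Proposition \labelcref{2.3} into the classical May spectral sequence. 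The few products of two $\overline{x}$-type classes are pinned down by recognizing them, where possible, as $i_*$ of a product in $\text{Ext}_{\mathcal{A}(2)_*}$ — as happens over $\mathcal{A}(1)_*$, where $\overline{h_1^3}^2 = i_*(P)$ — and otherwise for degree reasons.

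The main obstacle is this last extension-resolution step. It is complicated by the phenomenon flagged after Notation \labelcref{barnotation}: the bar notation is only well defined modulo the image of $i_*$, so one cannot freely manipulate identities such as $a\overline{b} = \overline{a}b$, and each multiplicative relation in Table \ref{4} must be justified by one specific mechanism above, with all indeterminacies tracked so that no relation is used to "prove itself." By contrast, the higher May differentials are essentially mechanical once Isaksen's $\mathcal{A}(2)_*$ computation is available and the surjectivity of the reduction-modulo-$\tau$ map is invoked.
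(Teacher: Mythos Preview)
Your proposal has a genuine gap: the claim that the reduction-modulo-$\tau$ map ``stays surjective on every page'' is false. Surjectivity on $E_1$ does not propagate to higher pages, even when the kernel is generated by the permanent cycle $\tau$. Concretely, on the May $E_2$-page for $\mathcal{A}(2)_*$ we have $d_2(h_1 b_{20}) = \tau h_1^4$ (using $h_0 h_1 = 0$), so $h_1 b_{20}$ does not survive to $E_3$ on the source side; any other preimage of $h_1 b_{20}$ has the form $h_1 b_{20} + \tau y$, and for this to be a $d_2$-cycle one would need $d_2(y) = h_1^4$, which is impossible since $h_1^4$ is a permanent cycle. Hence the class $\overline{h_1^4}$ in the $E_3$-page for $\frac{\mathcal{A}(2)_*}{\tau}$ is not in the image of the comparison map, and your argument ``surjectivity of the reduction map guarantees these are all the differentials'' breaks down precisely on the new classes you most need to control.

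The paper's proof bypasses this entirely and is far shorter: after computing $E_3$ from Proposition~\ref{4.2}, one simply observes that \emph{for degree reasons} there are no possible higher differentials and no possible hidden extensions. The point, made explicit in the remark following the proposition, is that the motivic weight spreads classes into distinct tridegrees, so in each case there is no available target. Your elaborate machinery for hidden extensions via $i_*$, $q_*$, and the long exact sequence is likewise unnecessary here --- the trigrading alone suffices. Your instinct to cross-check dimensions against \labelcref{les} is sound as a sanity check, but it is not needed for the proof.
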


\begin{proof}
The $E_3$-page of the May spectral sequence for $\frac{\mathcal{A}(2)_*}{\tau}$ is computed using Proposition \labelcref{4.2}. For degree reasons there are no possible higher differentials and no possible hidden extensions. 
\end{proof}

\begin{remark}
    The elimination of higher differentials in the proof of Proposition \labelcref{maygens} is easier than the corresponding classical argument. The motivic weight spreads elements into distinct degrees and rules out many possibilities.
\end{remark}
\begin{remark}
    The notation for the generators 
    \begin{align*}
        &\overline{h_1^4} && \overline{h_1^2 c} &&\overline{u}
    \end{align*} is justified in Section \ref{5} where we show that these classes have the property
    \begin{align*}
        &q_*(\overline{h_1^4}) = h_1^4 && q_*(\overline{h_1^2 c}) = h_1^2 c && q_*(\overline{u}) = u
    \end{align*}
    where $q_*$ is the projection map from \labelcref{les}.
\end{remark}
We record some Massey products in $\text{Ext}_{\frac{\mathcal{A}(2)_*}{\tau}}$ which will be used in Section \labelcref{sec7}.
\begin{lemma}\label{mas}
We have the Massey products
    \begin{align*}
        & \overline{h_1^4} = \langle h_1, h_0, h_0 h_2 \rangle && c = \langle h_1, h_0, h_2^2 \rangle && u = \langle h_1, h_2, h_2^2 \rangle
    \end{align*}
    in $\text{Ext}_{\frac{\mathcal{A}(2)_*}{\tau}}$. There is no indeterminacy.
\end{lemma}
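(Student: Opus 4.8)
The plan is to compute each of these three Massey products directly in the cobar complex for $\frac{\mathcal{A}(2)_*}{\tau}$, using the $d_1$-differentials recorded in Proposition~\labelcref{4.1} and the May-spectral-sequence identifications from Proposition~\labelcref{4.2}. In each case the strategy is the same: find explicit cobar cocycles representing the three bracketed classes, exhibit the required nullhomotopies of the two products (these exist by the relations $h_0 \cdot h_0 h_2 = 0$, $h_0 \cdot h_2^2 = 0$, and $h_2 \cdot h_2^2 = 0$ in $\text{Ext}_{\frac{\mathcal{A}(2)_*}{\tau}}$, or alternatively lift the analogous relations known classically and over $\mathcal{A}(2)_*$), form the Massey product cocycle, and identify its class by a degree count against Table~\ref{3}. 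The tridegrees make this painless: $\langle h_1, h_0, h_0 h_2\rangle$ lands in stem/filtration/weight matching $\overline{h_1^4}$, $\langle h_1, h_0, h_2^2\rangle$ matches $c$, and $\langle h_1, h_2, h_2^2\rangle$ matches $u$, and in each tridegree there is a unique nonzero class.

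An efficient route avoiding explicit cobar bookkeeping is to use naturality of Massey products under the maps of Proposition~\labelcref{2.2} and Proposition~\labelcref{2.3}. Over classical $\mathcal{A}(2)_*$ the brackets $\langle h_1, h_0, h_0 h_2\rangle$, $\langle h_1, h_0, h_2^2\rangle$, $\langle h_1, h_2, h_2^2\rangle$ are known (these are essentially the defining brackets for $c_0$, $u$ and the like in Bruner's and Isaksen's tables), and over $\mathcal{A}(2)_*$ itself the $\C$-motivic analogues hold up to possible $\tau$-multiples; applying the ring map $i_* \colon \text{Ext}_{\mathcal{A}(2)_*} \to \text{Ext}_{\frac{\mathcal{A}(2)_*}{\tau}}$, which by Lemma~\labelcref{iring} is a ring map and hence respects Massey products of elements it carries along, pins down the brackets in $\text{Ext}_{\frac{\mathcal{A}(2)_*}{\tau}}$ once one checks that $h_1, h_0, h_2, h_0 h_2, h_2^2$ all pull back from $\text{Ext}_{\mathcal{A}(2)_*}$ and that no $\tau$ ambiguity survives (it cannot, since $\tau = 0$ in the quotient). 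I would present whichever of the two arguments is shorter given the conventions already set up; the direct cobar computation is self-contained and probably cleanest here.

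The indeterminacy claim is the part requiring genuine care rather than bookkeeping. The indeterminacy of $\langle a, b, c\rangle$ is $a \cdot \text{Ext} + \text{Ext} \cdot c$ in the appropriate tridegree, so for each bracket I must check that both $h_1 \cdot \text{Ext}_{\frac{\mathcal{A}(2)_*}{\tau}}^{s',f',w'}$ and $(h_0 h_2) \cdot \text{Ext}_{\frac{\mathcal{A}(2)_*}{\tau}}^{s'',f'',w''}$ (respectively $h_2^2 \cdot(-)$, etc.) vanish in the relevant tridegree. This is a finite check against the generators-and-relations presentation in Table~\ref{3} and Table~\ref{4}: one lists every element in the source tridegree, multiplies by $h_1$ (resp. $h_0 h_2$, $h_2^2$), and confirms the product is zero using the relations. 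The main obstacle is thus purely organizational --- making sure the tridegree arithmetic is done correctly and that no product-term is overlooked in the low-dimensional range where these classes live --- and I expect it to go through cleanly precisely because, as the earlier remark notes, the motivic weight separates classes and kills most a priori candidates.
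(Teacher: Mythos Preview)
Your direct cobar approach would work in principle, but the paper takes a cleaner third route you do not mention: May's convergence theorem \cite{May}. The May $d_2$-differentials $d_2(b_{20}) = h_0^2 h_2$, $d_2(h_0(1)) = h_0 h_2^2$, $d_2(b_{21}) = h_2^3$ (together with the $d_1$-differentials killing $h_0 h_1$ and $h_1 h_2$) immediately yield
\[
h_1 b_{20} \in \langle h_1, h_0, h_0 h_2\rangle,\qquad h_1 h_0(1) \in \langle h_1, h_0, h_2^2\rangle,\qquad h_1 b_{21} \in \langle h_1, h_2, h_2^2\rangle,
\]
and these May names detect $\overline{h_1^4}$, $c$, $u$ respectively by Table~\ref{3}. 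This avoids all explicit cobar bookkeeping. Your indeterminacy check via $h_1\cdot\text{Ext} + \text{Ext}\cdot(h_0 h_2)$ etc.\ is the standard one and is fine; the paper instead argues indeterminacy by observing that each target $h_0^2 h_2$, $h_0 h_2^2$, $h_2^3$ is hit by a unique May $d_2$, so the nullhomotopy is essentially forced.

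Your naturality route via $i_*$ has a genuine gap for the first bracket. In $\text{Ext}_{\mathcal{A}(2)_*}$ one has $h_0^2 h_2 = \tau h_1^3 \neq 0$ (this is exactly the content of the May differential $d_2(b_{20}) = \tau h_1^3 + h_0^2 h_2$ over $\mathcal{A}(2)_*$), so $\langle h_1, h_0, h_0 h_2\rangle$ is not even defined in $\text{Ext}_{\mathcal{A}(2)_*}$ and there is nothing to push forward. Consistently, $\overline{h_1^4}$ is not in the image of $i_*$ at all --- it is one of the $\text{Ext}_{\mathcal{A}(2)_*}$-module indecomposables of Table~\ref{12}, with $q_*(\overline{h_1^4}) = h_1^4$. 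The naturality argument could be salvaged for $c$ and $u$ (where the relevant products $h_0 h_2^2$ and $h_2^3$ do vanish over $\mathcal{A}(2)_*$), but not uniformly, so it is not the right framework here.
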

\begin{proof}
    We can compute these using the May $d_2$-differentials and the May convergence theorem \cite{May}. Proposition \labelcref{4.2} gives us May $d_2$-differentials 
    \begin{align*}
        &d_2(b_{20}) = h_0^2 h_2 &&d_2(h_0(1)) = h_0 h_2^2 &&d_2(b_{21}) = h_2^3.
    \end{align*} 
    Since $h_0 h_1 $ and $h_1 h_2$ are both killed by $d_1$-differentials, we have  
    \begin{align*}
        & h_1 b_{20} \subset \langle h_1, h_0, h_0 h_2 \rangle && h_1 h_0(1) \subset \langle h_1, h_0, h_2^2 \rangle && h_1 b_{21} \subset \langle h_1, h_2, h_2^2 \rangle
    \end{align*}
    where $\overline{h_1^4}$, $c$, and $u$ are detected by $h_1 b_{20}$, $h_1 h_0(1)$, and $h_1 b_{21}$ respectively. The lack of indeterminacy can be checked by inspection. For example, the element $h_0^2 h_2$ is hit by a unique May $d_2$-differential so there is a unique choice of nullhomotopy when constructing $\langle h_1, h_0, h_0 h_2 \rangle$.
\end{proof}

\section{Inclusion and Projection for \texorpdfstring{$\text{Ext}_{\frac{\mathcal{A}(2)_*}{\tau}}$}{Lg}}\label{5}

In this section we compute the values of the inclusion and projection maps in the long exact sequence

\[\begin{tikzcd}
	\cdots & {\text{Ext}_{\mathcal{A}(2)_*}^{*, *, *}} & {\text{Ext}_{\frac{\mathcal{A}(2)_*}{\tau}}^{*, *, *}} & {\text{Ext}_{\mathcal{A}(2)*}^{\mathord{*-1}, \mathord{*+1},\mathord{*+1}}} & \cdots
	\arrow["i_*", from=1-2, to=1-3]
	\arrow["q_*", from=1-3, to=1-4]
	\arrow[from=1-1, to=1-2]
	\arrow[from=1-4, to=1-5]
\end{tikzcd}\]
We record the values of inclusion and projection by renaming the elements of $\text{Ext}_{\frac{\mathcal{A}(2)_*}{\tau}}$ according to their behavior under $i_*$ and $q_*$.

\begin{prop}\label{5.1}
    The values of $i_*$ on the ring generators of $\text{Ext}_{\mathcal{A}(2)_*}$ are given in the third column of Table \ref{1}.
\end{prop}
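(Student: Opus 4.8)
The plan is to compute $i_*$ on each ring generator of $\text{Ext}_{\mathcal{A}(2)_*}$ listed in Table \ref{1} by exploiting three facts: (1) by Lemma \labelcref{iring}, $i_*$ is a ring homomorphism, so it suffices to track the ring generators; (2) by exactness of \labelcref{les}, the kernel of $i_*$ is exactly the $\tau$-divisible part of $\text{Ext}_{\mathcal{A}(2)_*}$, so any generator that is not $\tau$-divisible must map to something nonzero; and (3) $i_*$ strictly preserves the trigrading $(s,f,w)$, so the target is pinned down by comparing with the explicit generators-and-relations description of $\text{Ext}_{\frac{\mathcal{A}(2)_*}{\tau}}$ from Proposition \labelcref{maygens} (Tables \ref{3} and \ref{4}). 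So the proof is essentially a finite degree-by-degree bookkeeping argument.

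First I would separate the generators of $\text{Ext}_{\mathcal{A}(2)_*}$ into those which are $\tau$-periodic (equivalently, not killed by any power of $\tau$) and those which are $\tau$-torsion. For the $\tau$-periodic generators — $h_0$, $h_1$, $h_2$, and the polynomial-type classes like $c$, $\alpha$, $\beta$, $w_1$, $w_2$, $d$, $e$, etc. — the class $i_*(x)$ is nonzero, and in most cases the degree $(s,f,w)$ already contains a unique nonzero element of $\text{Ext}_{\frac{\mathcal{A}(2)_*}{\tau}}$, forcing the value. When there is more than one element in that degree, I would cut down the ambiguity using multiplicative relations: since $i_*$ is a ring map, $i_*(x)$ must satisfy the image under $i_*$ of every relation $x$ satisfies in $\text{Ext}_{\mathcal{A}(2)_*}$, and it must be compatible with $i_*$ on lower-degree generators whose products land on $x$. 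This is exactly the style of argument already used in Proposition \labelcref{3.3} for the $\mathcal{A}(1)$ case.

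For the $\tau$-torsion generators the key observation is that such a class $x$ lies in $\im q_*$ is relevant only for naming the preimage; for $i_*$ itself, a $\tau$-torsion class $x$ with $\tau x = 0$ need not be $\tau$-divisible, so $i_*(x)$ may still be nonzero. Here I would use Notation \labelcref{barnotation}: the elements $\overline{h_1^4}$, $\overline{h_1^2 c}$, $\overline{u}$ of $\text{Ext}_{\frac{\mathcal{A}(2)_*}{\tau}}$ are defined by $q_*(\overline{h_1^4}) = h_1^4$ and similarly, and the Massey product identities of Lemma \labelcref{mas} together with the May spectral sequence computation pin down which monomials in Table \ref{3} these are. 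Then for a $\tau$-torsion ring generator $x$ of $\text{Ext}_{\mathcal{A}(2)_*}$, either $x$ is $\tau$-divisible (so $i_*(x)=0$) or its image is a specific barred monomial or ordinary monomial identified by degree plus the relation/module-structure constraints.

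I expect the main obstacle to be the generators sitting in degrees where $\text{Ext}_{\frac{\mathcal{A}(2)_*}{\tau}}$ has several classes of the same trigrading — this genuinely happens because the quotient ring has the extra barred indecomposables $\overline{h_1^4}, \overline{h_1^2 c}, \overline{u}$ floating around, doubling up some degrees. Resolving these cases requires combining (a) the ring relations satisfied by the source generator, (b) the known action of $i_*$ on all lower generators via multiplicativity, and possibly (c) comparison with the classical/$\tau$-localized picture via Proposition \labelcref{2.3}, to eliminate the spurious candidates. Everything else is routine degree-matching against Tables \ref{1}, \ref{3}, and \ref{4}.
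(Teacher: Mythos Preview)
Your approach is essentially the same as the paper's: use exactness of \labelcref{les} to conclude $\ker i_* = \tau\cdot\text{Ext}_{\mathcal{A}(2)_*}$, note that the ring generators are not $\tau$-divisible, and then pin down the images by matching tridegrees against the explicit basis of $\text{Ext}_{\frac{\mathcal{A}(2)_*}{\tau}}$ from Proposition \labelcref{maygens}. The paper's proof is three sentences and simply asserts that the values are ``forced for degree reasons,'' without invoking any of the auxiliary machinery you anticipate (multiplicative relations, classical comparison via Proposition \labelcref{2.3}, etc.).

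Two minor corrections. First, the list of ring generators you mention ($\alpha$, $\beta$, $w_1$, $w_2$) is not the one in Table \ref{1}; the actual generators are $h_0, h_1, h_2, P, c, u, a, d, n, e, g, \Delta h_1, \Delta c, \Delta u, \Delta^2$. Second, your worry about tridegree collisions is overstated for this particular computation: at each of the fifteen tridegrees in Table \ref{1}, the target $\text{Ext}_{\frac{\mathcal{A}(2)_*}{\tau}}$ is one-dimensional, so the nonzero image is unique and no disambiguation via relations is needed. Your contingency plan for resolving ambiguities is sound in principle, but here it is never triggered.
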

\begin{proof}
The values of $i_*$ are computed by inspection using the exactness of the sequence \labelcref{les}. Specifically recall that the kernel of $i_*$ is precisely the $\tau$-divisible elements of $\text{Ext}_{\mathcal{A}(2)_*}$. The ring generators of $\text{Ext}_{\mathcal{A}(2)_*}$ are not divisible by $\tau$ and therefore have nontrivial image under $i_*$. The specific values of $i_*$ on the generators in Table \ref{1} are forced for degree reasons.
\end{proof}
 By Lemma \labelcref{iring} we have that $i_*$ is a ring homomorphism so Proposition \labelcref{5.1} totally determines the behavior of $i_*$ on $\text{Ext}_{\mathcal{A}(2)_*}$.
\begin{prop}
The indecomposables of $\text{Ext}_{\frac{\mathcal{A}(2)_*}{\tau}}$ as an $\text{Ext}_{\mathcal{A}(2)_*}$-module are given in Table \ref{12}.
\end{prop}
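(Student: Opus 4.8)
The plan is to use the long exact sequence \labelcref{les} to reduce the statement to the (finite) problem of finding the $\text{Ext}_{\mathcal{A}(2)_*}$-module generators of the $\tau$-torsion submodule $T := \ker\big(\tau\colon \text{Ext}_{\mathcal{A}(2)_*} \to \text{Ext}_{\mathcal{A}(2)_*}\big)$. Since $i_*$ is a ring map (Lemma \labelcref{iring}) with $\ker i_* = \tau\cdot\text{Ext}_{\mathcal{A}(2)_*}$, its image is the cyclic submodule generated by $1$, and the only module indecomposable contained in $\im i_*$ is $1$, for degree reasons. Since $q_*$ is an $\text{Ext}_{\mathcal{A}(2)_*}$-module map (the Remark following Lemma \labelcref{iring}) with $\ker q_* = \im i_*$ and image all of $T$, the sequence \labelcref{les} restricts to a short exact sequence of $\text{Ext}_{\mathcal{A}(2)_*}$-modules
\[
0 \longrightarrow \text{Ext}_{\mathcal{A}(2)_*}/\tau \xrightarrow{\ i_*\ } \text{Ext}_{\frac{\mathcal{A}(2)_*}{\tau}} \xrightarrow{\ q_*\ } T \longrightarrow 0,
\]
where $T$ is regarded as a submodule of $\text{Ext}_{\mathcal{A}(2)_*}$ with the regrading of \labelcref{les}.

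Next I would record the following piece of general nonsense. If $\{x_i\}$ is a minimal set of $\text{Ext}_{\mathcal{A}(2)_*}$-module generators of $T$, and $\overline{x_i}$ are chosen with $q_*(\overline{x_i}) = x_i$ as in Notation \labelcref{barnotation}, then $\{1\}\cup\{\overline{x_i}\}$ is a minimal set of module generators of $\text{Ext}_{\frac{\mathcal{A}(2)_*}{\tau}}$. It generates because $1$ generates $\im i_* = \ker q_*$ while the $\overline{x_i}$ surject onto $T$, so the two families together surject onto both ends of the short exact sequence. It is minimal because $1$ is forced (it is the only class in tridegree $(0,0,0)$), and because applying the module map $q_*$ to a hypothetical relation $\overline{x_i} = \sum_j a_j m_j$ with the $a_j$ of positive degree would express $x_i = \sum_j a_j\, q_*(m_j)$ as a decomposable element of $T$, contradicting the minimality of $\{x_i\}$. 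Hence the module indecomposables of $\text{Ext}_{\frac{\mathcal{A}(2)_*}{\tau}}$ are precisely $1$ together with a choice of lift of each module generator of $T$.

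It then remains to read off $T$ and a minimal generating set for it from Isaksen's ring presentation of $\text{Ext}_{\mathcal{A}(2)_*}$ in Table \labelcref{1}. Here I would use two tools. First, by Proposition \labelcref{2.3} the $\tau$-localization map detects $\tau$-torsion: an element lies in $T$ if and only if it dies in the classical $\mathcal{A}(2)_*^{cl}$-cohomology, so $T$ is spanned by the "classically trivial" classes. Second, the motivic weight spreads the monomials into distinct tridegrees, which rigidly constrains both which classes can be $\tau$-torsion and which can serve as module generators, exactly as in the weight-balancing arguments used for the May differentials above. This identifies the module generators of $T$; the expectation, consistent with the Remark preceding this section, is that they are $h_1^4$, $h_1^2 c$, and $u$, whose bar-lifts are already named in Table \labelcref{3}. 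One then checks that every other $\tau$-torsion class is an $\text{Ext}_{\mathcal{A}(2)_*}$-multiple of one of these (e.g.\ $h_1^{4+k} = h_1^k\cdot h_1^4$, and the $g$-power multiples of the three basic classes, which remain $\tau$-torsion), and that products of two or more bar classes either lie in $\im i_*$ or are $i_*$-multiples of a single bar class, so introduce no new indecomposables. Combining with $1$ gives exactly the list in Table \labelcref{12}.

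The main obstacle is this last step: honestly determining $T$ and a minimal generating set for it. In the $\mathcal{A}(1)_*$ case the $\tau$-torsion was a single $h_1$-tower, so the analogous bookkeeping was trivial; for $\mathcal{A}(2)_*$ the ring $\text{Ext}_{\mathcal{A}(2)_*}$ has a genuinely intricate multiplicative structure, and one must work through the relations in Table \labelcref{1} with some care, tracking which $\tau$-torsion classes are multiplicative consequences of the basic ones. As emphasized throughout the paper, the saving grace is the extra motivic-weight grading, which makes this bookkeeping far more rigid, and hence far shorter, than the corresponding purely classical analysis.
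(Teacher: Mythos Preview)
Your general framework---the short exact sequence of $\text{Ext}_{\mathcal{A}(2)_*}$-modules and the bijection between nontrivial module indecomposables of $\text{Ext}_{\frac{\mathcal{A}(2)_*}{\tau}}$ and module generators of the $\tau$-torsion $T$---is correct and is a perfectly good way to organize the argument. The error is in your concrete conclusion. You assert that $T$ is generated as an $\text{Ext}_{\mathcal{A}(2)_*}$-module by $h_1^4$, $h_1^2 c$, and $u$, and that ``products of two or more bar classes either lie in $\im i_*$ or are $i_*$-multiples of a single bar class.'' Both claims are false. The table you are asked to verify has \emph{twelve} entries, not four: the products $\overline{u}^2$, $\overline{u}^3$, $\overline{u}\cdot\overline{h_1^4}$, $\overline{u}\cdot\overline{h_1^2 c}$, $\overline{u}^2\cdot\overline{h_1^4}$, $\overline{u}^2\cdot\overline{h_1^2 c}$, $\overline{u}^3\cdot\overline{h_1^4}$, $\overline{u}^3\cdot\overline{h_1^2 c}$ are all genuine new module indecomposables. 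Correspondingly, $T$ has many more module generators than you list (e.g.\ $\tau h_2 g$, $cd$, $\Delta u$, $\Delta c\cdot d$, \dots), and these are not $\text{Ext}_{\mathcal{A}(2)_*}$-multiples of $h_1^4$, $h_1^2 c$, or $u$.

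The paper takes a different and shorter route that avoids analyzing $T$ altogether. It works directly inside $\text{Ext}_{\frac{\mathcal{A}(2)_*}{\tau}}$, whose ring presentation has already been computed in Section~\labelcref{sec4}. From Table~\labelcref{1} one sees that among the twelve ring generators of $\text{Ext}_{\frac{\mathcal{A}(2)_*}{\tau}}$ exactly three---$\overline{h_1^4}$, $\overline{h_1^2 c}$, $\overline{u}$---are not in $\im i_*$, so the module indecomposables are precisely the monomials in those three that are not divisible by an element of $\im i_*$. The relations in Table~\labelcref{4} give $\overline{h_1^4}^{\,2}=h_1^2P$, $\overline{h_1^4}\cdot\overline{h_1^2 c}=Pc$, $\overline{h_1^2 c}^{\,2}=Pd$ in $\im i_*$, and Table~\labelcref{1} gives $\overline{u}^{\,4}=\Delta^2$ in $\im i_*$. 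This cuts the monomials down to $\overline{u}^{\,a}\cdot x$ with $0\le a\le 3$ and $x\in\{1,\overline{h_1^4},\overline{h_1^2 c}\}$, which is exactly Table~\labelcref{12}. Your approach can be salvaged, but you would have to do the honest bookkeeping on $T$ that you flagged as ``the main obstacle''; the paper sidesteps that by exploiting the ring structure on the mod-$\tau$ side.
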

\begin{proof}
By inspection on tridegrees, none of the elements listed in Table \ref{12} are in the image of $i_*$. The $\text{Ext}_{\mathcal{A}(2)_*}$-module structure of  $\text{Ext}_{\frac{\mathcal{A}(2)_*}{\tau}}$ is determined by the map $i_*$. A product of the ring generators in Table \ref{3} is an $\text{Ext}_{\mathcal{A}(2)_*}$-module indecomposable if it is not divisible by an element in the image of $i_*$. The ring generators of $\text{Ext}_{\frac{\mathcal{A}(2)_*}{\tau}}$ which are not in the image of $i_*$ are precisely $\overline{h_1^4}$, $\overline{h_1^2 c}$, and $\overline{u}$ so we consider products consisting only of these three elements. Recall from Proposition \ref{5.1} that $i_*(\Delta^2) = \overline{u}^4$. Furthermore the relations in Table \ref{4} give us
\begin{align*}
    &i_*(h_1^2 P) = \overline{h_1^4} \mathord{\cdot} \overline{h_1^4} &&i_*(P c) = \overline{h_1^4} \mathord{\cdot} \overline{h_1^2 c} &&i_*(P d) = \overline{h_1^2 c} \mathord{\cdot} \overline{h_1^2 c}.
\end{align*}
Therefore the elements listed in Table \ref{12} are the only elements of $\text{Ext}_{\frac{\mathcal{A}(2)_*}{\tau}}$ which are not divisible by an element in the image of $i_*$.
\end{proof}

We can now justify the naming convention for the elements in our charts.

\begin{prop}\label{qmod}
The values of $q_*$ on the $\text{Ext}_{\mathcal{A}(2)_*}$-module indecomposables of $\text{Ext}_{\frac{\mathcal{A}(2)_*}{\tau}}$ are given in the third column of Table \ref{12}.
\end{prop}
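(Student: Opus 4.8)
The plan is to argue exactly as in the proof of Proposition \ref{Kindecomps}, reading all the values off the long exact sequence \labelcref{les}. By exactness, $\im q_*$ is the $\tau$-torsion subgroup of $\text{Ext}_{\mathcal{A}(2)_*}$ and $\ker q_* = \im i_*$. Since $1 = i_*(1)$ lies in the image of $i_*$, we get $q_*(1) = 0$; and by the previous proposition none of the other module indecomposables in Table \ref{12} lies in $\im i_* = \ker q_*$, so each of them has a \emph{nonzero} image under $q_*$, lying in the $\tau$-torsion of $\text{Ext}_{\mathcal{A}(2)_*}$ in the tridegree obtained from Table \ref{3} by the shift $(s,f,w)\mapsto(s-1,f+1,w+1)$.

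First I would handle the three ring generators $\overline{h_1^4}$, $\overline{h_1^2 c}$, $\overline{u}$ that are not in the image of $i_*$. Their tridegrees are recorded in Table \ref{3}, and the corresponding shifted tridegrees of $\text{Ext}_{\mathcal{A}(2)_*}$ each contain a single nonzero $\tau$-torsion class, namely $h_1^4$, $h_1^2 c$, and $u$ respectively, as one reads off Isaksen's computation \cite{Isa09} (Table \ref{1}). This forces $q_*(\overline{h_1^4}) = h_1^4$, $q_*(\overline{h_1^2 c}) = h_1^2 c$, and $q_*(\overline{u}) = u$, which is precisely what makes the bar notation of Notation \ref{barnotation} consistent. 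As a cross-check one can compare with the Massey products of Lemma \ref{mas}: $q_*$ is a module map, hence compatible with the multiplications occurring in those brackets, so it carries each bracket in $\text{Ext}_{\frac{\mathcal{A}(2)_*}{\tau}}$ onto the matching bracket in $\text{Ext}_{\mathcal{A}(2)_*}$.

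For the remaining module indecomposables in Table \ref{12} --- the higher powers of $\overline{u}$ together with their products with $\overline{h_1^4}$ and $\overline{h_1^2 c}$ --- I would exploit that $q_*$ is a map of $\text{Ext}_{\mathcal{A}(2)_*}$-modules, so $q_*\big(i_*(y)\cdot z\big) = y\cdot q_*(z)$, together with the relations of Table \ref{4} and the values of $i_*$ from Proposition \ref{5.1}; the latter already place many products of bar-classes (such as $\overline{h_1^4}^{\,2} = i_*(h_1^2 P)$, $\overline{u}^{\,4} = i_*(\Delta^2)$, $\overline{h_1^4}\cdot\overline{h_1^2 c} = i_*(Pc)$, $\overline{h_1^2 c}^{\,2} = i_*(Pd)$) in $\ker q_*$, and the rest are then pinned down by the shifted-tridegree bookkeeping in $\text{Ext}_{\mathcal{A}(2)_*}$. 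The hard part will be the denser tridegrees of $\text{Ext}_{\mathcal{A}(2)_*}$, where the $\tau$-torsion subgroup can fail to be one-dimensional and a naive degree count is not decisive; there one must use the module structure over $\text{Ext}_{\mathcal{A}(2)_*}$ and the multiplicative relations of Table \ref{4} (and, if needed, the bracket descriptions of Lemma \ref{mas}) to separate the possibilities. The motivic weight is a real help here, since it splits classes into distinct tridegrees and removes many candidates outright.
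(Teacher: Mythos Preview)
Your approach is essentially identical to the paper's: use exactness of \labelcref{les} to see that $q_*$ surjects onto the $\tau$-torsion of $\text{Ext}_{\mathcal{A}(2)_*}$, and then pin down each value by inspecting the unique $\tau$-torsion class in the shifted tridegree. The paper's proof is in fact terser than yours---it simply asserts that all the values in Table~\ref{12} are ``forced for degree reasons'' and gives $q_*(\overline{h_1^4})$, $q_*(\overline{h_1^2 c})$, $q_*(\overline{u})$, $q_*(\overline{u}^2)=\tau h_2 g$ as representative examples, without invoking the module structure, the relations of Table~\ref{4}, or the Massey products. Your anticipated ``hard part'' (multi-dimensional $\tau$-torsion in a target tridegree) does not actually occur for the indecomposables in Table~\ref{12}, so the extra machinery you propose is unnecessary here, though harmless.
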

\begin{proof}
The values of $q_*$ are computed by inspecting the long exact sequence \labelcref{les}. Specifically the map $q_*$ surjects onto the $\tau$-torsion elements of $\text{Ext}_{\mathcal{A}(2)_*}$. The values of $q_*$ on the $\text{Ext}_{\mathcal{A}(2)_*}$-module indecomposables of $\text{Ext}_{\frac{\mathcal{A}(2)_*}{\tau}}$ are then forced for degree reasons. For example we have 
\begin{align}\label{qring}
        &q_*(\overline{h_1^4}) = h_1^4 && q_*(\overline{h_1^2 c}) = h_1^2 c && q_*(\overline{u}) = u && q_*(\overline{u}^2) = \tau h_2 g.
    \end{align}

The values of $q_*$ on other module indecomposables are determined similarly. 
\end{proof}
\begin{remark}
    All values of $q_*$ are determined by the values of $q_*$ on the $\text{Ext}_{\mathcal{A}(2)_*}$-module indecomposables of $\text{Ext}_{\frac{\mathcal{A}(2)_*}{\tau}}$ and the ring structure of $\text{Ext}_{\mathcal{A}(2)_*}$ \cite{Isa09}. For example, since $q_*$ is an $\text{Ext}_{\mathcal{A}(2)_*}$-module map, we have $q_*(d \cdot \overline{u}^2) = d \cdot q_*(\overline{u}^2)$. Since $q_*(\overline{u}^2) = \tau h_2 g$ we have $d \cdot q_*(\overline{u}^2) = \tau h_2 g d$. Since $h_2 d = h_0 e$ in $\text{Ext}_{\mathcal{A}(2)_*}$ we conclude that $q_*(d \cdot \overline{u}^2) = \tau h_0 e g$. 
\end{remark}

\section{Motivic Adams differentials for \texorpdfstring{$\mathit{mmf}/\tau$}{Ag}}\label{sec6}

In this section we compute the differentials in the motivic Adams spectral sequence for $\mathit{mmf}/\tau$. The ring structure of $\text{Ext}_{\frac{\mathcal{A}(2)_*}{\tau}}$ is robust enough for all differentials to follow from a single differential via Leibniz propagation. There are several ways to derive the initial seed differential. We proceed by comparison with the motivic Adams spectral sequence for $\mathbb{S}/\tau$ which has been computed by machine up to the 110-stem \cite{aNSS}.

\begin{thm}
    The $d_2$-differentials on the ring generators of the Adams $E_2$-page for $\mathit{mmf}/\tau$ are given in Table \ref{3}. 
\end{thm}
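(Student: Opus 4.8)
The plan is to establish a single "seed" $d_2$-differential in the motivic Adams spectral sequence for $\mathit{mmf}/\tau$ by pulling it back along the map of Adams spectral sequences induced by the unit map $\mathbb{S}/\tau \to \mathit{mmf}/\tau$, and then to propagate all remaining $d_2$-differentials by the Leibniz rule using the ring structure of $\text{Ext}_{\frac{\mathcal{A}(2)_*}{\tau}}$ from Proposition \labelcref{maygens}. Since the motivic Adams spectral sequence for $\mathbb{S}/\tau$ is isomorphic to the algebraic Novikov spectral sequence for the sphere (Proposition \labelcref{GWXss}) and has been computed by machine through the 110-stem \cite{aNSS}, the map of spectral sequences $\text{Ext}_{\frac{\mathcal{A}_*}{\tau}} \to \text{Ext}_{\frac{\mathcal{A}(2)_*}{\tau}}$ (induced by the ring map of Hopf algebras $\frac{\mathcal{A}_*}{\tau} \to \frac{\mathcal{A}(2)_*}{\tau}$, exactly as in the proof of Lemma \labelcref{iring}) transports the known differentials. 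The relevant seed is the first nontrivial one; from the structure of Table \ref{3} it will be a $d_2$ hitting a class built from $h_1$, $h_0$, $h_2$, and the class $\overline{h_1^4}$ (or equivalently the one landing in the $h_0^2 h_2$-line, compatible with the May $d_2(b_{20}) = h_0^2 h_2$ from Proposition \labelcref{4.2}).

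**Next I would** verify that the map $\mathbb{S}/\tau \to \mathit{mmf}/\tau$ detects this seed differential nontrivially: one checks on $E_2$-pages that the source and target classes in $\text{Ext}_{\frac{\mathcal{A}_*}{\tau}}$ map to the named classes in $\text{Ext}_{\frac{\mathcal{A}(2)_*}{\tau}}$ and are not annihilated. This is a finite check on tridegrees, made easier by the fact that the motivic weight separates classes (as noted in the remark after Proposition \labelcref{maygens}); in each relevant tridegree there is essentially a unique candidate. Since maps of spectral sequences commute with differentials, the seed differential in $\text{Ext}_{\frac{\mathcal{A}(2)_*}{\tau}}$ follows. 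Then I would apply the Leibniz rule: because $d_2$ is a derivation, knowing $d_2$ on a generating set determines it everywhere, and conversely the value of $d_2$ on the seed forces its value on every other ring generator via multiplicative relations in Table \ref{4} — each generator either is $\tau$-torsion-free and its $d_2$ is pinned by comparison, or its $d_2$ is forced by a product relation connecting it to classes whose differentials are already known. Classes like $\overline{h_1^2 c}$ and $\overline{u}$ inherit differentials compatibly with $q_*$ (equation \labelcref{qring}) and with the known algebraic Novikov differentials for $\mathit{tmf}$.

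**The main obstacle** will be confirming that the propagated list is \emph{complete and consistent} — that no additional $d_2$-differential is possible beyond those forced by Leibniz, and that the forced differentials don't produce a contradiction (e.g., $d_2$ of a relation in Table \ref{4} must vanish). For the consistency check one applies $d_2$ to each defining relation and verifies the output is zero on the $E_2$-page; for completeness one notes that any differential not in the list would have to originate on an $\text{Ext}_{\mathcal{A}(2)_*}$-module indecomposable not already accounted for, and a tridegree count (using the weight grading to cut down cases) rules these out, exactly as higher May differentials were eliminated in Proposition \labelcref{maygens}. A secondary subtlety is the ambiguity in the bar-classes: since $\overline{x}$ is only well-defined modulo $\im i_*$ (the remark after Notation \labelcref{barnotation}), one must check the stated differentials are independent of this choice, which holds because $i_*$-classes are permanent cycles (they come from $\mathit{mmf}$, whose Adams spectral sequence has its own differentials compatible via $i_*$) so altering a representative changes $d_2$ only by $d_2$ of an $i_*$-class, which is itself in $\im i_*$.
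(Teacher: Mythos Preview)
Your overall strategy matches the paper's: import one seed differential from the motivic Adams spectral sequence for $\mathbb{S}/\tau$ along the unit map, then propagate by Leibniz and $d_2\circ d_2 = 0$. However, your identification of the seed is wrong, and the reason is a conflation of two different spectral sequences. The differential $d_2(b_{20}) = h_0^2 h_2$ from Proposition~\ref{4.2} is a \emph{May} differential used to compute the Adams $E_2$-page $\text{Ext}_{\frac{\mathcal{A}(2)_*}{\tau}}$; it is not an Adams differential and has no direct bearing on the present theorem. No Adams $d_2$ in Table~\ref{3} lands in a class built only from $h_0,h_1,h_2,\overline{h_1^4}$.

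The paper's actual seed is $d_2(e) = h_1^2 d$, imported from the known algebraic Novikov differential $d_2(e_0) = h_1^2 d_0$ for the sphere. From there the paper proceeds exactly as you outline: the relation $cu = h_1^2 e$ in Table~\ref{4} and Leibniz give $c\cdot d_2(u) = h_1^4 d \neq 0$, forcing $d_2(u) = h_1^2 c$; then $d_2\circ d_2 = 0$ gives $d_2(c) = d_2(d) = d_2(g) = 0$; the relation $c\cdot\overline{h_1^4} = h_1^2\overline{h_1^2 c}$ forces $d_2(\overline{h_1^2 c}) = 0$; and the relation $cd = u\cdot\overline{h_1^2 c} + h_1^2 c\cdot\overline{u}$ forces $d_2(\overline{u}) = \overline{h_1^2 c}$. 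Your ``main obstacle'' paragraph is largely unnecessary: once $d_2$ is known on every ring generator it is determined everywhere by Leibniz, so there is no separate completeness check to perform. Finally, your claim that ``$i_*$-classes are permanent cycles'' is false as stated (e.g.\ $e = i_*(e)$ supports $d_2(e) = h_1^2 d$); what is true, and what you go on to use correctly, is that $d_2$ of an $i_*$-class lies in $\im i_*$ because $i_*$ is a map of spectral sequences.
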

\begin{proof}
The differentials on $h_0$, $h_1$, $h_2$, $\overline{h_1^4}$, and $P$ are all trivial for degree reasons. The differentials on $e$, $u$, $\overline{h_1^2 c}$, and $\overline{u}$ are determined below in Lemmas  \ref{lem6.2}, \ref{lem6.3}, \ref{6.3}, and \ref{6.4}. The differentials on $c$, $d$, and $g$ are determined by the differentials on $u$, $\overline{u}$, and $e$ along with the fact that $d_2 \circ d_2 = 0$.
\end{proof}
\begin{lemma}\label{lem6.2}
$d_2(e) = h_1^2 d$.
\end{lemma}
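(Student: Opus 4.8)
The plan is to obtain this differential by naturality from the motivic Adams spectral sequence for $\mathbb{S}/\tau$, which by Proposition \labelcref{GWXss} is the algebraic Novikov spectral sequence for $BP_*$ and has been computed by machine well past the $17$-stem in \cite{aNSS}. Smashing the unit map $\mathbb{S} \to \mathit{mmf}$ with $\mathbb{S}/\tau$ gives the right-hand vertical arrow $\mathbb{S}/\tau \to \mathit{mmf}/\tau$ of diagram \labelcref{dia} and hence a map of $\C$-motivic Adams spectral sequences. On $E_2$-pages this is a ring map $\text{Ext}_{\mathcal{A}_*}(\mathbb{S}/\tau) \to \text{Ext}_{\mathcal{A}_*}(\mathit{mmf}/\tau) \cong \text{Ext}_{\frac{\mathcal{A}(2)_*}{\tau}}$ that is compatible with the long exact sequences \labelcref{les} for $\mathbb{S}$ and for $\mathit{mmf}$.

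First I would verify that this comparison map sends the classes $h_1$, $d$, $e$ of $\text{Ext}_{\mathcal{A}_*}(\mathbb{S}/\tau)$ to the classes of the same name in Table \ref{3}. Using compatibility with \labelcref{les}, this reduces to the corresponding statement for the unit $\text{Ext}_{\mathcal{A}_*} \to \text{Ext}_{\mathcal{A}(2)_*}$ — that $h_1$, $d_0$, $e_0$ map to $h_1$, $d$, $e$ — which is forced on tridegrees once one observes that $d_0$ and $e_0$ are $\tau$-torsion-free, hence not killed by reduction modulo $\tau$. Next I would invoke the $\mathbb{S}/\tau$ computation of \cite{aNSS}, in which one finds the $d_2$-differential $d_2(e) = h_1^2 d$; naturality of Adams differentials then transports this to $d_2(e) = h_1^2 d$ in the spectral sequence for $\mathit{mmf}/\tau$, with no room for a correction term since $e$ is literally the image of $e$. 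Finally I would read off from Table \ref{4} that $h_1^2 d$ is nonzero in $\text{Ext}_{\frac{\mathcal{A}(2)_*}{\tau}}$, in tridegree $(16,6,10)$, so that the transported differential is nontrivial.

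The step demanding the most care — more bookkeeping than a true obstacle — is the identification of named classes across the comparison map: one must be sure that $e$ genuinely lies in its image, so the differential is transported verbatim, and that the motivic weights match, so that $h_1^2 d$ is exactly the image of $h_1^2 d_0$ rather than merely a summand. Both points are settled by inspecting tridegrees against the chart for $\text{Ext}_{\mathcal{A}(2)_*}$ in \cite{Isa09} and the chart for $\text{Ext}_{\frac{\mathcal{A}(2)_*}{\tau}}$ built in Section \labelcref{sec4}. Should a proof independent of \cite{aNSS} be wanted, the alternative is to compute $d_2(e)$ directly by a Moss-convergence argument from a Toda bracket expression for $e_0$, in the style of the May-convergence computations in the proof of Lemma \labelcref{mas}; but the comparison is shorter and is the route indicated in the text.
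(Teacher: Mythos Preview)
Your proposal is correct and follows essentially the same approach as the paper: both use the unit map $\mathbb{S}/\tau \to \mathit{mmf}/\tau$ to import the differential $d_2(e_0) = h_1^2 d_0$ from \cite{aNSS} by naturality. The only cosmetic difference is in the identification of classes under the comparison map --- the paper argues that $e_0$ and $e$ share the same cobar representative in the respective May spectral sequences, whereas you reduce to the map $\text{Ext}_{\mathcal{A}_*} \to \text{Ext}_{\mathcal{A}(2)_*}$ and argue on tridegrees; both are equally valid and amount to the same verification.
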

\begin{proof}
We have a unit map $\epsilon:\mathbb{S}/\tau \to \mathit{mmf}/\tau$ that induces a map on $\C$-motivic Adams spectral sequences. Recall the differential $$d_2(e_0) = h_1^2 d_0$$ in the $\C$-motivic Adams $E_2$-page for $\mathbb{S}/\tau$ (also known as the algebraic Novikov $E_2$-page) \cite{aNSS}. Since $e_0$ and $e$ have the same cobar representative in the May spectral sequence for $\mathcal{A}_*$ and $\mathcal{A}(2)_*$ respectively, we have 
\begin{align*}
    &\epsilon_*(h_1) = h_1 &&\epsilon_*(d_0) = d &&\epsilon_*(e_0) = e.
\end{align*} 
Since $\epsilon_*$ is a map of Adams spectral sequences, we have
\[d_2(e) = h_1^2 d.\qedhere\]
\end{proof}
\begin{remark}
    The proof of Lemma \labelcref{lem6.2} is not really homotopical and can be phrased in terms of $BP_*BP$-comodules. From this perspective, the $\C$-motivic Adams spectral sequence for $\mathbb{S}/\tau$ is the algebraic Novikov spectral sequence for $BP_*$. The algebraic Novikov spectral sequence for $BP_*$ can be computed by machine \cite{wang}.
\end{remark}
\begin{lemma}\label{lem6.3}
    $d_2(u) = h_1^2 c$.
\end{lemma}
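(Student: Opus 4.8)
The plan is to mimic the proof of Lemma \labelcref{lem6.2} exactly, using the unit map $\epsilon : \mathbb{S}/\tau \to \mathit{mmf}/\tau$ and comparing with the already-computed motivic Adams spectral sequence for $\mathbb{S}/\tau$. First I would recall from \cite{aNSS} that the algebraic Novikov spectral sequence for $BP_*$ (equivalently, the $\C$-motivic Adams $E_2$-page for $\mathbb{S}/\tau$) contains the differential $d_2(\overline{u}') = h_1^2 c'$ — wait, more precisely, I need the differential on the class which $\epsilon_*$ sends to $u$. Looking at the relevant element, there is a class in $\text{Ext}_{\mathcal{A}_*}(\mathbb{S}/\tau)$ — call it by the name it has in \cite{aNSS} — whose cobar representative in the $\C$-motivic May spectral sequence for $\mathcal{A}_*$ agrees with the cobar representative of $u$ in the May spectral sequence for $\frac{\mathcal{A}(2)_*}{\tau}$, since $u$ is represented by $h_1 b_{21}$ (equivalently arises from the Massey product $\langle h_1, h_2, h_2^2\rangle$ by Lemma \labelcref{mas}), and $b_{21}$, $h_1$, $h_2$ all lift along the quotient $\mathcal{A}_* \to \mathcal{A}(2)_*$. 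Hence $\epsilon_*$ sends that class to $u$, and likewise $\epsilon_*(h_1) = h_1$ and $\epsilon_*$ of the relevant $c$-class is $c$.

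The second step is to quote the differential $d_2$ on that class in the Adams spectral sequence for $\mathbb{S}/\tau$ from \cite{aNSS}, which reads (after renaming) $d_2(u) = h_1^2 c$ at the relevant tridegree. Since $\epsilon_*$ is a map of spectral sequences it commutes with $d_2$, so applying $\epsilon_*$ to the source differential and using the three identifications $\epsilon_*(h_1) = h_1$, $\epsilon_*(c) = c$, $\epsilon_*(u) = u$ yields $d_2(u) = h_1^2 c$ in the Adams spectral sequence for $\mathit{mmf}/\tau$, provided $h_1^2 c$ is nonzero in $\text{Ext}_{\frac{\mathcal{A}(2)_*}{\tau}}$ at that tridegree, which it is by the ring structure in Table \ref{4}. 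This is essentially the whole argument; it is the same template as Lemma \labelcref{lem6.2}.

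The main obstacle, such as it is, is the bookkeeping needed to justify the identifications $\epsilon_*(u) = u$ and $\epsilon_*(c) = c$: one must check that the cobar/May representatives of these classes lift along $\mathcal{A}_* \to \mathcal{A}(2)_*$ and that the target class $h_1^2 c$ is not hit by an earlier differential and does not vanish for a dimension-counting reason. Given that Lemma \labelcref{mas} already pins down $u = \langle h_1, h_2, h_2^2 \rangle$ with no indeterminacy, and that the analogous Massey product description holds in $\text{Ext}_{\mathcal{A}_*}(\mathbb{S}/\tau)$ by naturality of Massey products under $\epsilon_*$, the identification is forced. So I expect no real difficulty — the proof should be three or four sentences, structurally identical to the proof of Lemma \labelcref{lem6.2}, with the one substantive input being the citation of the source $d_2$-differential in \cite{aNSS}.

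\begin{proof}[Proof sketch]
As in the proof of Lemma \labelcref{lem6.2}, use the unit map $\epsilon : \mathbb{S}/\tau \to \mathit{mmf}/\tau$, which induces a map of $\C$-motivic Adams spectral sequences. Recall the differential $d_2(u) = h_1^2 c$ in the $\C$-motivic Adams $E_2$-page for $\mathbb{S}/\tau$ (the algebraic Novikov $E_2$-page) from \cite{aNSS}. Since $u$ and $c$ have the same cobar representatives in the May spectral sequences for $\mathcal{A}_*$ and $\mathcal{A}(2)_*$ respectively (both lift along the quotient $\mathcal{A}_* \to \mathcal{A}(2)_*$, e.g.\ via the Massey products of Lemma \labelcref{mas}), we have $\epsilon_*(h_1) = h_1$, $\epsilon_*(u) = u$, and $\epsilon_*(c) = c$. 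As $\epsilon_*$ commutes with $d_2$, applying it to the source differential gives $d_2(u) = h_1^2 c$ in the Adams spectral sequence for $\mathit{mmf}/\tau$, where $h_1^2 c \neq 0$ by the ring structure of $\text{Ext}_{\frac{\mathcal{A}(2)_*}{\tau}}$.
\end{proof}
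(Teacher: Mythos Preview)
Your approach has a genuine gap: the class $u \in \text{Ext}_{\frac{\mathcal{A}(2)_*}{\tau}}^{11,3,7}$ is not in the image of $\epsilon_*$, so there is no differential in the Adams spectral sequence for $\mathbb{S}/\tau$ to import. The obstruction is the presence of $h_3$ in the full Steenrod algebra. In the May spectral sequence for $\frac{\mathcal{A}_*}{\tau}$ one has $d_2(b_{21}) = h_2^3 + h_1^2 h_3$; the term $h_1^2 h_3$ vanishes only after passing to $\mathcal{A}(2)_*$, where $\xi_1^4 = 0$. Thus in $\text{Ext}_{\frac{\mathcal{A}_*}{\tau}}$ the relation $h_2^3 = h_1^2 h_3 \neq 0$ holds, so the bracket $\langle h_1, h_2, h_2^2 \rangle$ is not even defined there, and the May representative $h_1 b_{21}$ supports the nontrivial differential $d_2(h_1 b_{21}) = h_1^3 h_3$ rather than surviving. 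Your claim that ``$u$ has the same cobar representative'' conflates the expression $h_1 b_{21}$ making sense on the May $E_1$-page with its representing a permanent cycle; over $\frac{\mathcal{A}_*}{\tau}$ it does not.

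This is precisely why the paper does \emph{not} repeat the template of Lemma~\labelcref{lem6.2} here. As announced at the start of Section~\labelcref{sec6}, only the single seed differential $d_2(e) = h_1^2 d$ is imported from $\mathbb{S}/\tau$; everything else is propagated internally by the Leibniz rule. For Lemma~\labelcref{lem6.3} the paper uses the relation $c u = h_1^2 e$ from Table~\ref{4}: applying $d_2$ and Lemma~\labelcref{lem6.2} gives $d_2(c)\cdot u + c\cdot d_2(u) = h_1^4 d$, and a short case analysis on the (at most one) possible nonzero value of $d_2(c)$ forces $d_2(u) = h_1^2 c$.
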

\begin{proof}
    Recall from Table \ref{4} that $c u = h_1^2 e$. Then by Lemma \ref{lem6.2} and the Leibniz rule, we have $c \cdot d_2(u) = h_1^4 d$. Therefore $d_2(u)$ is nonzero so we must have $d_2(u) = h_1^2 c$ for degree reasons.
\end{proof}
\begin{proof}
    \textbf{Alt:} Recall from Table \ref{4} that $c u = h_1^2 e$. First we show that $d_2(c) = 0$. The only possible nonzero value of $d_2(c)$ is $h_1^2 \overline{h_1^4}$. If $d_2(c) = h_1^2 \overline{h_1^4}$ then we must have $d_2(u) = 0$ since $d_2 \circ d_2 = 0$. Lemma \ref{lem6.2} and the Leibniz rule then give us $h_1^4 d = h_1^2 \overline{h_1^4} \cdot u$. Recall from Table \ref{4} that $\overline{h_1^4} \cdot u = h_1^2 d + h_1^4 \overline{u}$. This gives us $h_1^4 d + h_1^6 \overline{u} = h_1^4 d$ which is a contradiction so $d_2(c) = 0$. Now by Lemma \ref{lem6.2} and the Leibniz rule, we have $c \cdot d_2(u) = h_1^4 d$. Therefore $d_2(u)$ is nonzero so we must have $d_2(u) = h_1^2 c$ for degree reasons.
\end{proof}
\begin{lemma}\label{6.3}
    $d_2(\overline{h_1^2 c}) = 0$.
\end{lemma}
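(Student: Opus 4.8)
The plan is to show the target tridegree of $d_2(\overline{h_1^2 c})$ is too small to accommodate a nonzero class, after first reducing the question along the projection map. First I would read off the tridegree of $\overline{h_1^2 c}$ from Table \ref{3}; since $d_2$ has tridegree $(-1,2,0)$ in the motivic Adams spectral sequence, this pins down the tridegree in which $d_2(\overline{h_1^2 c})$ must live. Using the generators of Table \ref{3} together with the relations of Table \ref{4} I would then enumerate every monomial of $\text{Ext}_{\frac{\mathcal{A}(2)_*}{\tau}}$ sitting in that tridegree; the point is that this list is very short. As remarked after Proposition \ref{maygens}, the motivic weight separates classes into distinct degrees and eliminates most a priori candidates, which is what makes this step tractable.

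Second, for whatever candidate $Y$ survives, I would use that $q_*$ is induced by a map of spectra and hence commutes with Adams $d_2$-differentials. By Notation \ref{barnotation} and Proposition \ref{qmod}, $q_*(\overline{h_1^2 c}) = h_1^2 c$, so $q_*\bigl(d_2(\overline{h_1^2 c})\bigr) = d_2(h_1^2 c)$, the latter computed in the Adams spectral sequence for $\mathit{mmf}$, whose $E_2$-page is $\text{Ext}_{\mathcal{A}(2)_*}$. The class $h_1^2 c$ lies in a low stem in which this spectral sequence has no differentials (equivalently, it is a permanent cycle, detecting $\eta^2\epsilon$), so $d_2(h_1^2 c) = 0$, whence $d_2(\overline{h_1^2 c}) \in \ker q_* = \im i_*$. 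This kills every candidate $Y$ not in the image of $i_*$.

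Third, to dispose of any surviving candidate of the form $Y = i_*(z)$, I would multiply by $\overline{h_1^4}$. Since $d_2(\overline{h_1^4}) = 0$ for degree reasons, we get $d_2(\overline{h_1^4}\cdot\overline{h_1^2 c}) = \overline{h_1^4}\cdot d_2(\overline{h_1^2 c})$; on the other hand $\overline{h_1^4}\cdot\overline{h_1^2 c} = i_*(Pc)$ by the relation recorded in Table \ref{4}, and because $i_*$ commutes with $d_2$ this differential equals $i_*(d_2(Pc))$, which vanishes since $Pc$ is again a permanent cycle in the Adams spectral sequence for $\mathit{mmf}$. Hence $\overline{h_1^4}\cdot d_2(\overline{h_1^2 c}) = 0$, and if multiplication by $\overline{h_1^4}$ is injective on the relevant tridegree — checked against the $\text{Ext}_{\mathcal{A}(2)_*}$-module structure of Section \ref{5} — we conclude $d_2(\overline{h_1^2 c}) = 0$.

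The main obstacle is the bookkeeping in the first step: the whole argument rests on the target tridegree being small enough that Steps 2 and 3 leave no room. If some candidate unexpectedly survives both the $q_*$ reduction and multiplication by $\overline{h_1^4}$, the fallback is the method of Lemma \ref{lem6.2}: pull the differential back along the unit map $\mathbb{S}/\tau \to \mathit{mmf}/\tau$, using that $\overline{h_1^2 c_0}$ is a $d_2$-cycle in the machine-computed algebraic Novikov spectral sequence for $BP_*$ \cite{aNSS} and maps to $\overline{h_1^2 c}$ modulo $\im i_*$, on which $d_2$ is controlled by the Adams spectral sequence for $\mathit{mmf}$.
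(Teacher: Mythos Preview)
Your plan is correct and will go through, but it takes a different and somewhat longer route than the paper. The paper never invokes $q_*$ or the Adams spectral sequence for $\mathit{mmf}$: instead it uses $d_2(c)=0$ (obtained internally from Lemma~\ref{lem6.3} via $d_2\circ d_2=0$) together with the relation $c\cdot\overline{h_1^4}=h_1^2\cdot\overline{h_1^2 c}$ at $(13,6,8)$ in Table~\ref{4} --- not the relation $\overline{h_1^4}\cdot\overline{h_1^2 c}=Pc$ at $(16,7,9)$ that you use. This yields $h_1^2\cdot d_2(\overline{h_1^2 c})=0$, and since the sole candidate $Ph_1^2$ in degree $(10,6,6)$ satisfies $h_1^2\cdot Ph_1^2=Ph_1^4\neq 0$, the differential vanishes. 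Your Step~2 is in fact redundant here because that sole candidate $Ph_1^2$ already lies in $\im i_*$, so projecting along $q_*$ rules nothing out; your Step~3 does the real work, and it is the same injectivity-of-multiplication trick as the paper's, just with $\overline{h_1^4}$ in place of $h_1^2$ and with the vanishing of $d_2(Pc)$ imported from the Adams spectral sequence for $\mathit{mmf}$ rather than derived from the already-established Lemma~\ref{lem6.3}. Both versions succeed; the paper's is preferable in context because it keeps Section~\ref{sec6} self-contained modulo the single seed differential of Lemma~\ref{lem6.2}.
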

\begin{proof}
By Lemma \ref{lem6.3} we have $d_2(c) = 0$ since $d_2 \circ d_2 = 0$. Therefore $d_2(c \cdot \overline{h_1^4}) = 0$ since $d_2(\overline{h_1^4}) = 0$ for degree reasons. The relations in Table \ref{4} give us $c \cdot \overline{h_1^4} = h_1^2 \cdot \overline{h_1^2 c}$ so we have $h_1^2 \cdot d_2(\overline{h_1^2 c}) = 0$. We can not have $d_2(\overline{h_1^2 c}) = P h_1^2$ since $P h_1^2 \cdot h_1^2 \neq 0$. Therefore we must have $d_2(\overline{h_1^2 c}) = 0$.
\end{proof}
\begin{lemma}\label{6.4}
    $d_2(\overline{u}) = \overline{h_1^2 c}$. 
\end{lemma}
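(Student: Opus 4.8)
The plan is to compute $d_2(\overline{u})$ by propagating the differential $d_2(u) = h_1^2 c$ of Lemma \ref{lem6.3} through a relation in Table \ref{4} that ties $\overline{u}$ to classes whose $d_2$ we already control. The relation I would use is the one already invoked in the proof of Lemma \ref{lem6.3}, which after rearranging reads
\[
    h_1^4 \, \overline{u} = \overline{h_1^4} \cdot u + h_1^2 d .
\]
The inputs are: $d_2(\overline{h_1^4}) = 0$, which holds for degree reasons (as in the main theorem); $d_2(u) = h_1^2 c$ by Lemma \ref{lem6.3}; the relation $\overline{h_1^4} \cdot c = h_1^2 \cdot \overline{h_1^2 c}$ from Table \ref{4}; and the observation that the potentially troublesome term $h_1^2 \, d_2(d)$ never has to be evaluated, since applying $d_2$ to Lemma \ref{lem6.2} together with $d_2 \circ d_2 = 0$ gives $h_1^2 \, d_2(d) = d_2(h_1^2 d) = d_2(d_2(e)) = 0$.

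With these in hand, applying $d_2$ to the displayed relation and using the Leibniz rule yields
\[
    h_1^4 \, d_2(\overline{u}) = \overline{h_1^4} \cdot d_2(u) + h_1^2 \, d_2(d) = h_1^2 \bigl( \overline{h_1^4} \cdot c \bigr) = h_1^4 \cdot \overline{h_1^2 c}.
\]
Since $h_1^4 \cdot \overline{h_1^2 c}$ is nonzero by the ring structure recorded in Tables \ref{3} and \ref{4}, the class $d_2(\overline{u})$ is nonzero; and by inspection of tridegrees the only class in the tridegree of $d_2(\overline{u})$ not annihilated by $h_1^4$ is $\overline{h_1^2 c}$ — the same flavour of argument used to rule out $d_2(\overline{h_1^2 c}) = P h_1^2$ in Lemma \ref{6.3}. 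This forces $d_2(\overline{u}) = \overline{h_1^2 c}$. It is also consistent with $q_*$ being a map of Adams spectral sequences, since $q_*(\overline{u}) = u$ and $q_*(\overline{h_1^2 c}) = h_1^2 c$.

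I expect the delicate point to be that last step: verifying that $h_1^4 \cdot \overline{h_1^2 c}$ genuinely does not vanish and that no $h_1^4$-torsion correction term can live in the tridegree of $d_2(\overline{u})$. Both are finite checks against the multiplicative structure near the relevant stem, and as elsewhere in this section the motivic weight eliminates most of the candidate corrections; the Leibniz propagation itself is routine once the correct relation from Table \ref{4} has been located.
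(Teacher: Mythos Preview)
Your argument is correct and uses the same Leibniz-propagation strategy as the paper, but it pivots on a different relation from Table~\ref{4}. The paper applies $d_2$ to the relation $cd = u \cdot \overline{h_1^2 c} + h_1^2 c \cdot \overline{u}$; using $d_2(c) = d_2(d) = 0$ and Lemma~\ref{6.3} ($d_2(\overline{h_1^2 c}) = 0$) together with $d_2(u) = h_1^2 c$, this yields $h_1^2 c \cdot d_2(\overline{u}) = h_1^2 c \cdot \overline{h_1^2 c} \neq 0$, forcing the result. Your route via $h_1^4\,\overline{u} = \overline{h_1^4}\cdot u + h_1^2 d$ requires one extra rewrite (through $c\,\overline{h_1^4} = h_1^2\,\overline{h_1^2 c}$) but has the mild advantage of not invoking Lemma~\ref{6.3} at all, so it is logically independent of that step. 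Your caution about possible $h_1^4$-torsion corrections in the target tridegree is unnecessary: $\overline{h_1^2 c}$ is the \emph{only} class in tridegree $(11,4,6)$, so once $h_1^4\,d_2(\overline{u}) \neq 0$ the conclusion is immediate.
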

\begin{proof}
Table \ref{4} gives us the relation $ cd = u \cdot \overline{h_1^2 c} + h_1^2 c \cdot \overline{u}$. By Lemma \ref{lem6.3} and the Leibniz rule we have 
\begin{align*}
     0 = h_1^2 c \cdot \overline{h_1^2 c} + h_1^2 c \cdot d_2(\overline{u}).
\end{align*}
Table \labelcref{4} gives us that $h_1^2 c \cdot \overline{h_1^2 c} \neq 0$ so we have $d_2(\overline{u}) = \overline{h_1^2 c}$.
\end{proof}

All higher differentials must be zero for degree reasons so the Adams spectral sequence for $\mathit{mmf}/\tau$ collapses at the $E_3$-page.

\section{The homotopy of \texorpdfstring{$\mathit{mmf}/\tau$}{Bg}}\label{sec7}

In this section we resolve some extensions in $\pi_{\ast, \ast} mmf/\tau$ that are not detected in the Adams $E_\infty$-page. We use these hidden extensions to give a full list of generators and relations for the bigraded homotopy ring $\pi_{\ast, \ast} mmf/\tau$. We then establish some hidden values of the inclusion and projection maps on homotopy.

\subsection{Ring Structure of $\pi_{\ast, \ast} \mathit{mmf}/\tau$}
See Definition 2.10 in \cite{IHES} for a precise definition of a hidden extension in the Adams spectral sequence. We say that a hidden $a$-extension from $b$ to $c$ is $D$-periodic if there is a hidden $a$-extension from $D^n b$ to $D^n c$ for all $n$. There are two notable extensions in $\pi_{\ast, \ast} \mathit{mmf}/\tau$ which are hidden on the Adams $E_\infty$-page for $\mathit{mmf}/\tau$. Namely there is a hidden $h_2$-extension from $h_2^2$ to $h_1 c$, and there is a hidden $h_1$-extension from $h_0^2 a$ to $P \overline{h_1^4}$. These are proved in Lemma \labelcref{lma1} and Lemma \labelcref{lma3} respectively. All hidden extensions in the Adams spectral sequence for $\mathit{mmf}/\tau$ are obtained from these two using periodicity. 
\begin{lemma}\label{lma1}
There is a hidden $h_2$-extension from $h_2^2$ to $h_1 c$ in $\pi_{*, *}\mathit{mmf}/\tau$.
\end{lemma}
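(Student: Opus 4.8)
The plan is to establish the hidden $h_2$-extension from $h_2^2$ to $h_1 c$ by pulling it back from a known hidden extension in the Adams spectral sequence for $\mathbb{S}/\tau$ via the unit map $\epsilon: \mathbb{S}/\tau \to \mathit{mmf}/\tau$, exactly as in the proof of Lemma \labelcref{lem6.2}. First I would recall from \cite{aNSS} that there is a hidden $h_2$-extension from $h_2^2$ to $h_1 c_0$ in the $\C$-motivic Adams spectral sequence for $\mathbb{S}/\tau$ (equivalently the algebraic Novikov spectral sequence for $BP_*$). Then, using that $h_2$, $h_2^2$, and $c_0$ have the same cobar/May representatives over $\mathcal{A}_*$ as the corresponding classes over $\mathcal{A}(2)_*$ do — so that $\epsilon_*(h_2) = h_2$, $\epsilon_*(h_2^2) = h_2^2$, and $\epsilon_*(c_0) = c$ — I would argue that $\epsilon_*$ carries the hidden extension to a hidden $h_2$-extension (or possibly trivial) from $h_2^2$ to $h_1 c$ in $\pi_{*,*}\mathit{mmf}/\tau$. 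To conclude it is genuinely nonzero, I would check that $h_1 c$ is itself nonzero in $\pi_{*,*}\mathit{mmf}/\tau$ (it survives the Adams spectral sequence and is not itself hit by any extension absorbing it), and that there is no room in the relevant tridegree for the extension to be hit by a class of higher Adams filtration.

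An alternative, more self-contained route I would keep in reserve is to use the Massey product description from Lemma \labelcref{mas}, which gives $u = \langle h_1, h_2, h_2^2 \rangle$ in $\text{Ext}_{\frac{\mathcal{A}(2)_*}{\tau}}$ and $c = \langle h_1, h_0, h_2^2 \rangle$. Combined with the Adams differential $d_2(u) = h_1^2 c$ from Lemma \labelcref{lem6.3}, one can run a Moss convergence theorem argument: the surviving class detecting the relevant Toda bracket in homotopy, together with the nullhomotopies witnessing $h_1 \cdot h_2 = 0$ and $h_2 \cdot h_2^2 = 0$ on $E_\infty$, produces a hidden $h_2$-extension on $h_2^2$ landing in the expected spot. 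The key input is that $h_1 h_2 = 0$ already on the $E_2$-page (indeed $h_1 h_2$ is hit by a May $d_1$), so the bracket is defined with controlled indeterminacy, and the target $h_1 c$ is the unique class of the right tridegree.

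The main obstacle I anticipate is not producing a candidate extension but \emph{verifying nontriviality and ruling out shorter targets}: I must confirm that in the tridegree of $h_1 c$ (stem $9$, weight as dictated by $|h_1 c|$), there is no class of strictly higher Adams filtration through which the $h_2$-multiplication could factor, so that the extension genuinely lands on $h_1 c$ rather than being hidden behind something else or vanishing. This requires a careful look at the charts for $\text{Ext}_{\frac{\mathcal{A}(2)_*}{\tau}}$ near the relevant stems, which the tables and figures referenced earlier should make routine. A secondary point to handle carefully is the compatibility of names under $\epsilon_*$: I need the cobar representatives to match so that the pullback lands on the class I have named $c$ and not on some $h_1$- or $P$-multiple; this is a bookkeeping check against Table \ref{3} and the May spectral sequence computation of Section \labelcref{sec4}.
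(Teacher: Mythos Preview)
Your primary approach via the unit map $\epsilon: \mathbb{S}/\tau \to \mathit{mmf}/\tau$ is valid and would work, but it is not what the paper does. The paper instead uses your ``alternative'' route --- Moss's convergence theorem applied to $u = \langle h_1, h_2, h_2^2\rangle$ together with the differential $d_2(u) = h_1^2 c$ --- and this is the preferred argument because it is self-contained: the paper's philosophy (see the remark following the lemma) is to \emph{deduce} the classical relation $\nu^3 + \eta^2\sigma = \eta\epsilon$ from this lemma rather than import it, so relying on the hidden extension in $\pi_{*,*}\mathbb{S}/\tau$ would reverse that logical flow.

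However, your sketch of the Moss argument has the logic backwards. You write about ``the surviving class detecting the relevant Toda bracket'' and ``nullhomotopies witnessing $h_2 \cdot h_2^2 = 0$,'' but the point is precisely the opposite: $u$ does \emph{not} survive (it supports $d_2$), so Moss's theorem tells you the Toda bracket $\langle h_1, h_2, h_2^2\rangle$ is \emph{not defined} in $\pi_{*,*}\mathit{mmf}/\tau$. Hence one of the adjacent products $h_1 h_2$ or $h_2 \cdot h_2^2 = h_2^3$ must be nonzero in homotopy. Since $\pi_{4,3}\mathit{mmf}/\tau = 0$ forces $h_1 h_2 = 0$, you conclude $h_2^3 \neq 0$, and $h_1 c$ is the only class in the correct bidegree. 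This is the complete argument --- no further filtration checks or bookkeeping against tables is needed.
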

\begin{proof}
Recall from Table \ref{3} that $d_2(u) = h_1^2 c$ and $u = \langle h_1, h_2, h_2^2 \rangle$ in $\text{Ext}_{\frac{\mathcal{A}(2)_*}{\tau}}$. Since $u$ supports an Adams differential, Moss's convergence theorem says that the bracket $\langle h_1, h_2, h_2^2 \rangle$ is not defined on $\pi_{\ast, \ast} \mathit{mmf}/\tau$ \cite{moss} \cite{Evamoss}. Therefore either $h_1 h_2 \neq 0$ or $h_2^3 \neq 0$ in $\pi_{\ast, \ast} \mathit{mmf}/\tau$. Certainly $h_1 h_2 = 0$ since $\pi_{4, 3} \mathit{mmf}/\tau = 0$ so we must have $h_2^3 \neq 0$. The only class in the appropriate bidegree is $h_1 c$.
\end{proof}
\begin{remark}
    In this section we use the notational scheme described in Notation \labelcref{barnotation}. In particular the classes $h_0 \mathord{\cdot}\overline{u}$, $\overline{u}^2$, and $\overline{u}^4$ are denoted by $a$, $\overline{\tau h_2 g}$, and $\Delta^2$ respectively.
\end{remark}
\begin{remark}
The hidden extension in Lemma \labelcref{lma1} is $g$-periodic and $\overline{\tau h_2 g}$-periodic.
\end{remark}
\begin{remark}
 Recall that in the $E_2$-page of the classical Adams spectral sequence for the sphere we have $h_2^3 + h_1^2 h_3 = 0$, but $\nu^3 + \eta^2 \sigma \neq 0$ in homotopy. The relation $\nu^3 + \eta^2 \sigma = \eta \epsilon$  was first proved by Toda using a homotopical argument with Toda brackets \cite{toda}. Analyzing the natural map $\mathbb{S} \to \mathit{mmf}/\tau$ provides a more elementary algebraic proof of this relation. The relation $\nu^3 + \eta^2 \sigma = \eta \epsilon$ follows immediately from Lemma \labelcref{lma1} and the fact that $\sigma = 0$ in $\pi_{\ast, \ast}\mathit{mmf}$. 
\end{remark}
\begin{lemma}\label{lma2}
    There is a Toda bracket $\langle h_0, h_0 h_2, P \rangle$ in $\pi_{\ast, \ast} \mathit{mmf}/\tau$ which is detected by $h_0^2 a$ in the Adams spectral sequence for $\mathit{mmf}/\tau$.
\end{lemma}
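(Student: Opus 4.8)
The plan is to realize the Toda bracket $\langle h_0, h_0 h_2, P \rangle$ by exhibiting the two relevant products that vanish in homotopy, showing the bracket is defined, and then identifying the detecting class by inspection of the Adams $E_\infty$-page for $\mathit{mmf}/\tau$. First I would check that the bracket is defined: we need $h_0 \cdot (h_0 h_2) = 0$ and $(h_0 h_2) \cdot P = 0$ in $\pi_{\ast, \ast} \mathit{mmf}/\tau$. The relation $h_0^2 h_2 = 0$ already holds in $\text{Ext}_{\mathcal{A}(2)_*}$ (it is killed by the May differential $d_2(b_{20}) = h_0^2 h_2$ in the $\tau$-quotient, as recorded in Proposition \labelcref{4.2}), hence on the $E_\infty$-page, and there is no room for a hidden extension target in the relevant bidegree, so $h_0^2 h_2 = 0$ in homotopy. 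Similarly $h_0 h_2 P = 0$: the product $h_0 h_2 P$ is already zero in $\text{Ext}_{\frac{\mathcal{A}(2)_*}{\tau}}$ (using $h_0^2 h_2 = 0$ and the ring structure of Table \ref{4}), and one checks there is no hidden extension producing it. Hence the three-fold bracket $\langle h_0, h_0 h_2, P \rangle \subseteq \pi_{\ast, \ast} \mathit{mmf}/\tau$ is defined, sitting in the bidegree of $h_0 h_2 + P$ shifted by one stem, i.e.\ the bidegree of $h_0^2 a$.

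Next I would pin down the detecting class. By the standard bound on Adams filtration of Toda brackets (the filtration of an element of $\langle h_0, h_0 h_2, P\rangle$ is at least the sum of the filtrations minus one), any element of the bracket has Adams filtration at least $1 + 2 + 4 - 1 = 6$, and lives in the appropriate stem. Consulting the chart / Table for $\text{Ext}_{\frac{\mathcal{A}(2)_*}{\tau}}$, the only nonzero class of that stem with filtration $\geq 6$ surviving to $E_\infty$ is $h_0^2 a$ (where $a = h_0 \mathord{\cdot} \overline{u}$ per Notation \labelcref{barnotation}); the classes of lower filtration in that stem are excluded by the filtration bound, and there are no competing classes of filtration $6$ or higher. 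So if the bracket contains a nonzero element it must be detected by $h_0^2 a$, and I would rule out $0 \in \langle h_0, h_0 h_2, P\rangle$ by noting either that the bracket has no indeterminacy (the indeterminacy is $h_0 \cdot \pi_{\ast,\ast} + \pi_{\ast,\ast} \cdot P$ in the relevant bidegrees, which one checks is zero) together with a nontriviality argument, or — more robustly — by invoking the algebraic Bockstein / Massey product description: the corresponding Massey product $\langle h_0, h_0 h_2, P\rangle$ in $\text{Ext}_{\frac{\mathcal{A}(2)_*}{\tau}}$ contains $h_0^2 a$ and is computed via the May convergence theorem from the $d_2$-differential $d_2(b_{20}) = h_0^2 h_2$, exactly as in Lemma \labelcref{mas}, and this Massey product converges to the Toda bracket since $P$ is a permanent cycle with no differentials entering or leaving the relevant region.

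The main obstacle I expect is verifying that the Toda bracket is nonzero, i.e.\ that the Massey product representative $h_0^2 a$ (equivalently the claimed detecting class) genuinely survives to represent an element of the homotopy bracket rather than being a boundary or having its bracket element equal to something of higher filtration than expected. This is handled by a Moss-convergence-theorem argument: one checks that in the Adams spectral sequence for $\mathit{mmf}/\tau$ there are no differentials $d_2$ or $d_3$ hitting or emanating from the classes $h_0$, $h_0 h_2$, $P$, or $h_0^2 a$ (all of which follow from the differential computation of Section \labelcref{sec6}, where the only nonzero $d_2$'s are on $e, u, \overline{u}$ and their consequences, and the spectral sequence collapses at $E_3$), so Moss's convergence theorem applies and the Massey product $\langle h_0, h_0 h_2, P\rangle$ on the $E_2$-page, which contains $h_0^2 a$, converges to the Toda bracket $\langle h_0, h_0 h_2, P\rangle$ in $\pi_{\ast,\ast}\mathit{mmf}/\tau$, detected by $h_0^2 a$. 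Finally I would remark that all the products and brackets in sight are $D$-periodic (where $D = \overline{u}^4 = \Delta^2$), so the statement propagates, though the bare statement as phrased only needs the single instance.
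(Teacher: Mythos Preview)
Your argument has a genuine gap: the product $h_0 h_2 \cdot P$ is \emph{not} zero on the Adams $E_2$-page $\text{Ext}_{\frac{\mathcal{A}(2)_*}{\tau}}$. From Table~\ref{4} the relation at $(11,5,6)$ reads $h_0\cdot\overline{h_1^2 c} = P h_2$, so $P h_2$ is a nonzero class, and multiplying by $h_0$ gives $P h_0 h_2 = h_0^2\cdot\overline{h_1^2 c}$, which is again nonzero (there is no relation killing it in Table~\ref{4}). Thus the Massey product $\langle h_0, h_0 h_2, P\rangle$ is simply not defined in $\text{Ext}_{\frac{\mathcal{A}(2)_*}{\tau}}$, and your appeal to the May differential $d_2(b_{20})=h_0^2 h_2$ only witnesses the vanishing of $h_0\cdot(h_0 h_2)$, not of $(h_0 h_2)\cdot P$. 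This also explains why your filtration bound $1+2+4-1=6$ does not match the actual filtration $5$ of $h_0^2 a$.

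The paper's proof instead works on the Adams $E_3$-page. From $d_2(\overline{u})=\overline{h_1^2 c}$ and $a=h_0\overline{u}$ one gets the Adams differential $d_2(h_0 a)=h_0^2\overline{h_1^2 c}=P h_0 h_2$, so $P h_0 h_2$ becomes zero on $E_3$. The Massey product $\langle h_0, h_0 h_2, P\rangle$ is therefore defined on $E_3$ and, using $h_0 a$ as the nullhomotopy of $h_0 h_2\cdot P$, contains $h_0\cdot(h_0 a)=h_0^2 a$ in filtration $5$. Moss's convergence theorem then produces the Toda bracket detected by $h_0^2 a$. (Incidentally, the indeterminacy is not zero as you claim: the paper records it as the $h_0$-multiples $h_0^{2+k}a$.) The missing idea in your attempt is precisely this use of an \emph{Adams} $d_2$-differential, rather than a May differential, to make the bracket defined.
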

\begin{proof}
    Recall that we have a $d_2$-differential $d_2(h_0 a) = P h_0 h_2$ in the Adams spectral sequence for $\mathit{mmf}/\tau$. This gives us a Massey product $h_0^2 a = \langle h_0, h_0 h_2, P \rangle$ in the $E_3$-page of the Adams spectral sequence for $\mathit{mmf}/\tau$. Moss's convergence theorem gives us the Toda bracket $\langle h_0, h_0 h_2, P \rangle$ in $\pi_{\ast, \ast} \mathit{mmf}/\tau$ which is detected by $h_0^2 a$ in the Adams spectral sequence for $\mathit{mmf}/\tau$. This Toda bracket has indeterminacy given by $h_0^{2 + k} a$.
\end{proof}
\begin{lemma}\label{lma3}
There is a hidden $h_1$-extension from $h_0^2 a$ to $P \overline{h_1^4}$ in $\pi_{{\ast, \ast}} \mathit{mmf}/\tau$.
\end{lemma}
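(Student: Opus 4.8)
The plan is to deduce this hidden extension from the Toda bracket of Lemma \ref{lma2} by sliding $h_1$ into it. By Lemma \ref{lma2} there is an element $x$ of the Toda bracket $\langle h_0, h_0 h_2, P \rangle$ in $\pi_{\ast, \ast} \mathit{mmf}/\tau$ that is detected by $h_0^2 a$ on the Adams $E_\infty$-page. I would show that $h_1 \cdot x$ is detected by $P \overline{h_1^4}$; since $h_1 \cdot h_0^2 a = (h_0 h_1) \cdot h_0 a = 0$ on the $E_\infty$-page and $P \overline{h_1^4}$ has strictly larger Adams filtration, this is precisely a hidden $h_1$-extension from $h_0^2 a$ to $P \overline{h_1^4}$. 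The key input is the Massey product $\overline{h_1^4} = \langle h_1, h_0, h_0 h_2 \rangle$ of Lemma \ref{mas}.

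First I would promote that Massey product to a Toda bracket in $\pi_{\ast, \ast} \mathit{mmf}/\tau$. The two defining products vanish: $h_0 \cdot h_0 h_2 = 0$ because $h_0^2 h_2$ is already zero in $\text{Ext}_{\frac{\mathcal{A}(2)_*}{\tau}}$ by Proposition \ref{4.2}, and $h_1 \cdot h_0 = 0$ because $h_0 h_1 = 0$ in $\text{Ext}_{\frac{\mathcal{A}(2)_*}{\tau}}$ (this is the May differential $d_1(h_{20}) = h_0 h_1$ of Proposition \ref{4.1}) together with the fact that the $\mathit{mmf}/\tau$ homotopy in stem $1$ is generated in Adams filtration $1$. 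Since $h_1$, $h_0$, $h_0 h_2$ and $\overline{h_1^4}$ are all permanent cycles, the Adams spectral sequence for $\mathit{mmf}/\tau$ collapses at $E_3$, and a bidegree check rules out crossing differentials, Moss's convergence theorem \cite{moss} shows that $\langle h_1, h_0, h_0 h_2 \rangle$ is defined in homotopy and contains an element $y$ detected by $\overline{h_1^4}$. Because Lemma \ref{mas} records that this Massey product has no indeterminacy on the $E_2$-page, the homotopy indeterminacy of $\langle h_1, h_0, h_0 h_2 \rangle$ consists of classes of Adams filtration strictly larger than that of $\overline{h_1^4}$.

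Now I would apply the three-fold shuffle $h_1 \cdot \langle h_0, h_0 h_2, P \rangle = \langle h_1, h_0, h_0 h_2 \rangle \cdot P$, which holds with no sign since we work at the prime $2$ and is legitimate because $h_1 h_0 = 0$, $h_0 \cdot h_0 h_2 = 0$ and $h_0 h_2 \cdot P = 0$ in $\pi_{\ast, \ast} \mathit{mmf}/\tau$. This places $h_1 \cdot x$ inside $\langle h_1, h_0, h_0 h_2 \rangle \cdot P = y P + I P$, where $I$ is the homotopy indeterminacy of $\langle h_1, h_0, h_0 h_2 \rangle$. The product $y P$ is detected by $\overline{h_1^4} \cdot P = P \overline{h_1^4}$, which is a nonzero permanent cycle on $E_\infty$ (a bidegree check shows it is not the target of a $d_2$), while $I P$ lies in strictly larger Adams filtration by the previous paragraph. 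Hence $h_1 \cdot x$ is detected by $P \overline{h_1^4}$, completing the argument.

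The step I expect to require the most care is the verification that Moss's convergence theorem applies — confirming the absence of a crossing differential — together with the filtration bookkeeping needed to pass from the set-level shuffle identity to the conclusion that the specific element $h_1 \cdot x$ is detected by $P \overline{h_1^4}$. Everything else reduces to bidegree computations inside the already-computed ring $\text{Ext}_{\frac{\mathcal{A}(2)_*}{\tau}}$.
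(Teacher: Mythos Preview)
Your proposal is correct and follows essentially the same approach as the paper: use Lemma~\ref{lma2} to detect $h_0^2 a$ as $\langle h_0, h_0 h_2, P\rangle$, shuffle with $h_1$ to obtain $\langle h_1, h_0, h_0 h_2\rangle \cdot P$, and invoke Moss's convergence theorem together with the Massey product of Lemma~\ref{mas} to identify the result as $P\overline{h_1^4}$. Your write-up is in fact more careful than the paper's, which leaves the crossing-differential check and the indeterminacy/filtration bookkeeping implicit.
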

\begin{proof}
Lemma \labelcref{lma2} gives us that $h_0^2 a$ detects the Today bracket $\langle h_0, h_0 h_2, P \rangle$. We can shuffle brackets to get $h_1 \langle h_0, h_0 h_2, P \rangle = \langle h_1, h_0, h_0 h_2\rangle P$. Recall from Lemma \labelcref{mas} that $\overline{h_1^4} = \langle h_1, h_0, h_0 h_2 \rangle$ in $\text{Ext}_{\frac{\mathcal{A}(2)_*}{\tau}}$ which gives us an analogous Toda bracket in $\pi_{\ast, \ast} \mathit{mmf}/\tau$ by Moss's convergence theorem. 
\end{proof}
\begin{remark}
The hidden extension in Lemma \labelcref{lma3} is $P$-periodic and $\overline{\tau h_2 g}$-periodic.
\end{remark}

These hidden extensions let us compute a minimal set of generators and relations for $\pi_{\ast, \ast} \mathit{mmf}/\tau$ as a bigraded ring.

\begin{prop}
The bigraded homotopy ring $\pi_{\ast, \ast} \mathit{mmf}/\tau$ is given by the generators and relations in Table \ref{13} and Table \ref{14}.

\end{prop}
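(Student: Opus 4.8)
The plan is to assemble $\pi_{\ast, \ast} \mathit{mmf}/\tau$ from the Adams $E_\infty$-page computed in Section \labelcref{sec6} together with the two hidden extensions established in Lemmas \labelcref{lma1} and \labelcref{lma3}. First I would recall that the Adams spectral sequence for $\mathit{mmf}/\tau$ collapses at $E_3$, so $E_\infty = E_3$ is the associated graded of $\pi_{\ast, \ast} \mathit{mmf}/\tau$ with respect to the Adams filtration; this is obtained explicitly from the ring $\text{Ext}_{\frac{\mathcal{A}(2)_*}{\tau}}$ of Table \labelcref{3}--\labelcref{4} by deleting the classes that support or receive a $d_2$ (namely $e$, $u$, $\overline{u}$, $\overline{h_1^2 c}$ and their propagated multiples). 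The underlying $\F_2$-module of $\pi_{\ast, \ast} \mathit{mmf}/\tau$ is then forced: there are no additive extension problems because everything is $2$-torsion of order exactly $2$ in each tridegree (the $h_0$-towers on $h_0$, $P$, etc.\ are already visible as genuine $h_0$-towers, as in the $\mathit{kq}/\tau$ case).

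Next I would read off a generating set. The surviving ring generators of $\text{Ext}_{\frac{\mathcal{A}(2)_*}{\tau}}$ that persist to $E_\infty$ give ring generators of $\pi_{\ast, \ast}\mathit{mmf}/\tau$, and by $g$- and $\overline{\tau h_2 g}$-periodicity (invoked in the remarks after Lemmas \labelcref{lma1} and \labelcref{lma3}) we also need the periodicity operators; I would take the list appearing in Table \labelcref{13}. For the relations: every relation in $E_\infty$ lifts to a relation in homotopy \emph{possibly corrected by a term of strictly higher Adams filtration}, and the only such corrections are governed by hidden extensions. By the stated claim that all hidden extensions are generated by the two in Lemmas \labelcref{lma1} and \labelcref{lma3} under periodicity, the corrected relations are exactly: the $E_\infty$-relations, together with $h_2 \cdot h_2^2 = h_1 c$ (from Lemma \labelcref{lma1}) replacing the $E_\infty$-relation $h_2^3 = 0$, and $h_1 \cdot h_0^2 a = P\,\overline{h_1^4}$ (from Lemma \labelcref{lma3}) replacing $h_1 \cdot h_0^2 a = 0$, plus the $g$- and $P$-, $\overline{\tau h_2 g}$-multiples of these, which are already consequences in the ring presentation once the periodicity generators are included. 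This yields Table \labelcref{14}.

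The main obstacle I anticipate is verifying completeness of the relation list, i.e.\ confirming that no \emph{further} hidden extensions exist beyond the two exhibited ones. In principle one must check, in each relevant tridegree, that the target group of a potential hidden extension is either zero or already accounted for. Here the motivic weight is the crucial bookkeeping device: as in the remark after Proposition \labelcref{maygens}, the weight grading separates classes that would be tangled classically, so the list of candidate hidden extensions is short and each candidate can be killed by a direct tridegree inspection (or, where a bracket argument is cleaner, by Moss's convergence theorem as in Lemmas \labelcref{lma1} and \labelcref{lma2}). A secondary, more routine point is checking that the chosen generators are genuinely minimal and that the listed relations are independent; this is a finite bookkeeping check against the charts and Tables \labelcref{3}, \labelcref{4}, \labelcref{12}. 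Once these are in hand the proposition follows by comparing the presented ring with the tabulated $\F_2$-module structure degree by degree.

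\begin{proof}
The Adams spectral sequence for $\mathit{mmf}/\tau$ collapses at $E_3$ by the differentials of Section \labelcref{sec6}, so $E_\infty = E_3$ is the associated graded of $\pi_{\ast, \ast}\mathit{mmf}/\tau$ for the Adams filtration. In each tridegree the group is an $\F_2$-vector space and there are no additive extensions, so the underlying module of $\pi_{\ast, \ast}\mathit{mmf}/\tau$ is $E_\infty$, which is read off from Table \labelcref{3} and Table \labelcref{4} by discarding the classes $e$, $u$, $\overline{u}$, $\overline{h_1^2 c}$ together with their $d_2$-images and propagated multiples. The surviving ring generators, together with the periodicity classes $g$, $P$, and $\overline{\tau h_2 g}$, generate $\pi_{\ast, \ast}\mathit{mmf}/\tau$ as a ring; this is the list in Table \labelcref{13}. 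Every relation holding in $E_\infty$ lifts to a relation in $\pi_{\ast, \ast}\mathit{mmf}/\tau$ up to a term of strictly higher Adams filtration, and such correction terms are precisely the hidden extensions. By Lemma \labelcref{lma1} and Lemma \labelcref{lma3} and their $g$-, $P$-, and $\overline{\tau h_2 g}$-periodicity, all hidden extensions are generated by the hidden $h_2$-extension $h_2 \cdot h_2^2 = h_1 c$ and the hidden $h_1$-extension $h_1 \cdot h_0^2 a = P\,\overline{h_1^4}$; completeness of this list follows by inspecting the finitely many candidate tridegrees, where the motivic weight separates classes and rules out the remaining possibilities, and by applying Moss's convergence theorem as in Lemma \labelcref{lma1} where a bracket computation is available. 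Correcting the $E_\infty$-relations by exactly these hidden extensions gives the relations in Table \labelcref{14}. Comparing the resulting presentation with the tabulated module structure in each tridegree shows the two agree, so $\pi_{\ast, \ast}\mathit{mmf}/\tau$ is presented by Table \labelcref{13} and Table \labelcref{14}.
\end{proof}
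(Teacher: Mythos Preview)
Your proposal is correct and takes essentially the same approach as the paper: assemble $\pi_{\ast,\ast}\mathit{mmf}/\tau$ from the Adams $E_\infty$-page of Section \labelcref{sec6} and correct the $E_\infty$-relations by the hidden extensions of Lemmas \labelcref{lma1} and \labelcref{lma3}. The paper's own proof is a one-line ``by inspection using Lemma \labelcref{lma1}, Lemma \labelcref{lma2}, Lemma \labelcref{lma3}, and the Adams $E_\infty$-page,'' so your version is simply a more explicit unpacking of that inspection (Lemma \labelcref{lma2} enters your argument implicitly through Lemma \labelcref{lma3}).
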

\begin{proof}
The ring structure of $\pi_{\ast, \ast} \mathit{mmf}/\tau$ can be determined by inspection using Lemma \labelcref{lma1}, Lemma \labelcref{lma2}, Lemma \labelcref{lma3}, and the Adams $E_\infty$-page for $\mathit{mmf}/\tau$.
\end{proof}

\subsection{Projection and Inclusion on Homotopy}\label{iqpi}
In this section we compute and analyze some values of the projection and inclusion maps 
\begin{align*}
    &i: \pi_{*, *} \mathit{mmf} \to \pi_{*, *} \mathit{mmf}/\tau &&q: \pi_{*, *} \mathit{mmf}/\tau \to \pi_{*-1, *+1} \mathit{mmf}
\end{align*}
on homotopy. Most of the values of these maps are determined by the computation in Section 5 of the maps
\begin{align*}
    &i_*: \text{Ext}_{\mathcal{A}(2)_*}^{*, *, *} \to \text{Ext}_{\frac{\mathcal{A}(2)_*}{\tau}}^{*, *, *} &&q_*: \text{Ext}_{\frac{\mathcal{A}(2)_*}{\tau}}^{*, *, *} \to \text{Ext}_{\mathcal{A}(2)_*}^{*-1, *+1, *+1}.
\end{align*}
on Adams $E_2$-pages. See Definition 2.6 in \cite{IHES} for a precise definition of a hidden value of a filtration preserving map.

\begin{prop}\label{hidq}
    Through the 60-stem, Table \labelcref{15} lists all hidden values of $q$.
\end{prop}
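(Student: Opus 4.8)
The plan is to establish the hidden values of $q \colon \pi_{*, *} \mathit{mmf}/\tau \to \pi_{*-1, *+1} \mathit{mmf}$ stem by stem, leveraging the fact that most values of $q$ are already forced by the algebraic map $q_*$ computed in Section \labelcref{5}. First I would recall the general mechanism: given a class $\overline{x} \in \pi_{s, w} \mathit{mmf}/\tau$ detected by an element $\bar{a}$ of the Adams $E_\infty$-page for $\mathit{mmf}/\tau$, the map $q$ commutes with the Adams spectral sequences (this is the map of spectral sequences coming from diagram \labelcref{cc 1}), so $q(\overline{x})$ is detected by $q_*(\bar{a})$ in the Adams spectral sequence for $\mathit{mmf}$, provided $q_*(\bar{a}) \neq 0$ on the $E_\infty$-page. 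A hidden value of $q$ occurs precisely when $q_*(\bar{a}) = 0$ on the relevant Adams $E_\infty$-page but $q(\overline{x}) \neq 0$ for filtration reasons forced by the module structure, or when $\bar a$ is itself in the image of $i_*$ on $E_\infty$ (so $q_*(\bar a)=0$) yet the homotopy class $\overline x$ is not in the image of $i$.

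The key steps, in order, would be: (1) tabulate, through the 60-stem, all classes in the Adams $E_\infty$-page for $\mathit{mmf}/\tau$ together with the values of $q_*$ computed via Proposition \labelcref{qmod} and the $\text{Ext}_{\mathcal{A}(2)_*}$-module structure; (2) identify the finitely many tridegrees where $q_*$ vanishes on $E_\infty$ but a nonzero value of $q$ is nevertheless possible — these are exactly the candidate hidden values; (3) for each such candidate, use the multiplicative structure of $\pi_{*, *} \mathit{mmf}$ and $\pi_{*, *} \mathit{mmf}/\tau$ (from the preceding Proposition on the ring structure, together with Isaksen's computation of $\text{Ext}_{\mathcal{A}(2)_*}$ and the known homotopy of $\mathit{tmf}$/$\mathit{mmf}$) to pin down the value; the main tools here are $q$ being a map of $\pi_{*,*}\mathit{mmf}$-modules, the hidden extensions of Lemmas \labelcref{lma1} and \labelcref{lma3}, and Toda-bracket/Moss-convergence arguments of the same flavor used in Section \labelcref{sec7}; (4) verify there are no further hidden values by checking that in every remaining tridegree either $q_*$ is already nonzero on $E_\infty$ (so the value is not hidden) or the target group $\pi_{s-1, w+1} \mathit{mmf}$ vanishes in the relevant filtration range.

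The main obstacle I expect is step (3): disentangling the possible targets when the Adams filtration jumps. Because $q$ raises Adams filtration (it is the connecting map, which shifts $(s, f, w) \mapsto (s-1, f+1, w+1)$ on $E_2$, but a hidden value can land several filtrations lower still in homotopy), one must rule out all classes of strictly higher filtration in the target stem of $\pi_{*, *}\mathit{mmf}$. This is where the $D$-periodicity ($\Delta$-periodicity, or more precisely periodicity with respect to the relevant power of $\Delta$) of the relevant families is essential: it lets one reduce infinitely many cases to finitely many, and it lets one propagate a single established hidden value through an entire periodic tower. I would also need to be careful that the naming conventions of Notation \labelcref{barnotation} are compatible — i.e. that the element called $\overline{x}$ really does have $q(\overline x) = x$ up to the indeterminacy given by the image of $i$ — and record the indeterminacy of each hidden value explicitly, since elements in the image of $i$ (equivalently, $\tau$-divisible classes after the identification) contribute to it. Once the candidate list is finite and the periodicity statements are in hand, each individual case should succumb to a short degree-count or bracket argument, and assembling them produces Table \labelcref{15}.
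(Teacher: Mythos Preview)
Your plan is workable but it attacks the problem from the harder side. You propose to start from each surviving class $\bar a$ in the Adams $E_\infty$-page for $\mathit{mmf}/\tau$, check whether $q_*(\bar a)$ vanishes, and then in the ambiguous cases use the $\pi_{*,*}\mathit{mmf}$-module structure, the hidden extensions of Lemmas \labelcref{lma1} and \labelcref{lma3}, and Moss/Toda-bracket arguments to determine $q$ positively. The paper instead works from the \emph{target} side: by exactness of the long exact sequence in homotopy, $q$ surjects onto the $\tau$-torsion of $\pi_{*,*}\mathit{mmf}$, so one simply enumerates the $\tau$-torsion classes in $\pi_{*,*}\mathit{mmf}$ through the 60-stem (these are read directly off Isaksen's charts \cite{Isa18}) and, for each such class, observes that there is a unique possible preimage in the Adams $E_\infty$-page for $\mathit{mmf}/\tau$ by degree reasons. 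No bracket manipulations or propagation of hidden extensions are needed; each entry of Table \labelcref{15} is forced by a single degree count.

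The practical difference is that your step (3) is never reached: once you know $q$ must hit every $\tau$-torsion class, the only question is which source detects the preimage, and in this range the answer is always unique. Your approach would also work, but you are doing redundant labor in trying to \emph{produce} the value of $q$ on each source rather than simply \emph{matching} each required target to its only possible source. One small slip: you write that a hidden value ``can land several filtrations lower still in homotopy''; in fact a hidden value of $q$ is detected in \emph{higher} Adams filtration than $q_*$ predicts (as you correctly note a sentence later when you say one must rule out classes of strictly higher filtration in the target).
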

\begin{proof}
The hidden values of projection on homotopy are computed by inspection using the exactness of the sequence induced by $\tau$ on homotopy. Specifically recall that $q$ surjects onto the kernel of $\tau$. For example, inspecting the Adams $E_\infty$-page for $\mathit{mmf}$ shows that the class $P h_1^3$ in the Adams spectral sequence for $\mathit{mmf}$ detects $\tau$-torsion in the homotopy of $\mathit{mmf}$ \cite{Isa18}. So $P h_1^3$ is in the image of $q$. Inspecting the Adams $E_\infty$-page for $\mathit{mmf}/\tau$ shows that for degree reasons the only possible preimage is the class $h_0^2 a$ in the Adams spectral sequence for $\mathit{mmf}/\tau$. All hidden values of $q$ are determined similarly. 
\end{proof}
\begin{example}
Figure \labelcref{mmftles} displays a portion of the exact sequence 
\begin{align*}
    \cdots \to \pi_{24, 12} \mathit{mmf} \xrightarrow{i} \pi_{24, 12} \mathit{mmf / \tau} \xrightarrow{q} \pi_{23, 13} \mathit{mmf} \to \cdots
\end{align*}
of graded abelian groups. Analyzing this short exact sequence shows that $P h_1 d$ is a hidden value of $i$ with preimage $a^2$.
  \begin{figure}[ht]
  \centering
  \begin{tikzpicture}[scale=0.6]
    \draw ;
    \draw [blue, ->, thick] (0, 0) -- (1.85, 0);
    \draw [blue, ->, thick] (0, 1) -- (1.85, 1);
    \draw [blue, ->, thick] (2, -3) -- (3.85, -3);
    \draw [blue, ->, thick] (2, -2) -- (3.85, -2);
    \draw [purple, ->, thick] (2, -1) -- (3.85, -0.05);
    \node [left] at (0, 1) {$h_0^2 a^2$};
    \node [left] at (0, 0) {$h_0 a^2$};
    \draw [black, -, thick] (0, 0) -- (0, 1);
    \draw [fill, black] (0, 1) circle (0.1);
    \draw [fill, black] (0, 0) circle (0.1);
    \draw [black, ->, thick] (0, 1) -- (0, 2);

    \node [left] at (2, -3) {$\overline{\tau h_2 g}$};
    \node [left] at (2, -2) {$\overline{\tau h_0 h_2 g}$};
    \node [left] at (0, 0) {$h_0 a^2$};
    \draw [red, -, thick] (2, -3) -- (2, -2);
    \draw [darkgreen, -, thick] (2, -2) -- (2, -1);
    \draw [fill, red] (2, -3)  circle (0.1);
    \draw [fill, red] (2, -2)  circle (0.1);

    \node [left] at (2, -1) {$a^2$};
    \draw [black, fill] (2, -1)  circle (0.1);

    \node [right] at (2, 0) {$h_0 a^2$};
    \draw [black, fill] (2, 0)  circle (0.1);
    \draw [black, -, thick] (2, -1) -- (2, 0);
    \draw [black, -, thick] (2, 0) -- (2, 1);
    \draw [black, ->, thick] (2, 1) -- (2, 2);

    \node [right] at (2, 1) {$h_0^2 a^2$};
    \draw [black, fill] (2, 1)  circle (0.1);
    \draw [black, -, thick] (4, -3) -- (4, -2);
    
    \draw [black, -, thick, dotted] (4, -2) -- (4, 0);

    \node [right] at (4, -3) {$\tau h_2 g$};
    \draw [black, fill] (4, -3)  circle (0.1);

    \node [right] at (4, -2) {$\tau h_0 h_2 g$};
    \draw [black, fill] (4, -2)  circle (0.1);

    \node [right] at (4, 0) {$P h_1 d$};
    \draw [black, fill] (4, 0)  circle (0.1);

\end{tikzpicture}
  \caption{$\pi_{24, 12} \mathit{mmf} \to \pi_{24, 12} \mathit{mmf / \tau} \to \pi_{23, 13} \mathit{mmf}$}\label{mmftles}
\end{figure}
\end{example}
\begin{prop}\label{hidi}
    Through the 60-stem, the hidden values of $i$ are 
    \begin{align*}
        i(\tau^2 a g) = P \overline{\tau h_2 g} && i(\tau \Delta^2 c) = P \Delta^2.
    \end{align*}
\end{prop}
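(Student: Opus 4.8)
The plan is to read off the hidden values of $i$ from the long exact sequence on homotopy groups induced by multiplication by $\tau$, in exactly the spirit of the proof of Proposition~\ref{hidq} but dually: $i$ surjects onto $\cok(\tau\colon \pi_{s,w+1}\mathit{mmf}\to\pi_{s,w}\mathit{mmf})$ and $\ker i = \im\tau$. By the analysis of Section~\ref{5}, $i$ on homotopy is detected on Adams $E_\infty$-pages by the algebraic inclusion $i_*$, and $\ker i_* = \tau\cdot\text{Ext}_{\mathcal{A}(2)_*}$; consequently a hidden value of $i$ can occur only on a homotopy class $\alpha\in\pi_{s,w}\mathit{mmf}$ detected by a $\tau$-divisible permanent cycle of $\text{Ext}_{\mathcal{A}(2)_*}$, and only when $\alpha\notin\im\tau$. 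The subtle point, and the reason such values exist, is that the homotopy classes named $\tau^2 ag$ and $\tau\Delta^2 c$ are named after the permanent cycles that detect them and are \emph{not} the products $\tau^2\cdot[ag]$ and $\tau\cdot[\Delta^2 c]$, since $ag$ and $\Delta^2 c$ are not themselves permanent cycles in the Adams spectral sequence for $\mathit{mmf}$. So the first step is to enumerate, through the 60-stem, the $\tau$-divisible permanent cycles on the Adams $E_\infty$-page for $\mathit{mmf}$ \cite{Isa18} and to discard those whose detected homotopy class does lie in $\im\tau$.

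Second, I would treat the two surviving classes directly. For $\tau^2 ag$: inspecting the Adams differentials for $\mathit{mmf}$ \cite{Isa18}, the classes $ag$ and $\tau ag$ are killed by Adams differentials whose targets are annihilated by $\tau^2$ but not by $\tau$, so $\tau^2 ag$ is a permanent cycle while the bidegree of weight one larger contains no permanent cycle $y$ with $\tau y = \tau^2 ag$ on $E_\infty$. Hence the class $\alpha$ detected by $\tau^2 ag$ is not $\tau$-divisible in $\pi_{*,*}\mathit{mmf}$, so $i(\alpha)\neq 0$. Exactness gives $q(i(\alpha)) = 0$, so $i(\alpha)$ is a permanent cycle lying in $\ker q$; comparing tridegrees on the Adams $E_\infty$-page for $\mathit{mmf}/\tau$ — whose $q$ is completely determined by Section~\ref{5} together with Proposition~\ref{hidq} — shows that the only class of strictly larger Adams filtration than $\tau^2 ag$ lying in $\ker q$ is $P\overline{\tau h_2 g}$, whence $i(\tau^2 ag) = P\overline{\tau h_2 g}$. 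The identical pattern, with $\Delta^2 c$ playing the role of $ag$ but with only a single factor of $\tau$ (the relevant differential target now being killed by $\tau$ itself), yields $i(\tau\Delta^2 c) = P\Delta^2$.

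Third, I would confirm that no further hidden values of $i$ appear through the 60-stem. Organizing the computation as a bidegree-by-bidegree scan of the short exact sequences $0\to\cok\tau\to\pi_{*,*}\mathit{mmf}/\tau\to\ker\tau\to 0$, as illustrated in Figure~\ref{mmftles}, every other $\tau$-divisible permanent cycle on the Adams $E_\infty$-page for $\mathit{mmf}$ in this range either detects a genuinely $\tau$-divisible homotopy class — on which $i$ vanishes, so nothing is hidden — or has its image under $i$ pinned down by exactness to an element of the same Adams filtration as the detecting class, so no filtration jump occurs.

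\textbf{Main obstacle.} The argument is fundamentally an extended bookkeeping scan, and its only genuinely delicate point is the distinction between being $\tau$-divisible on the $E_2$- or $E_\infty$-page and being $\tau$-divisible in homotopy: establishing that $ag$, $\tau ag$ and $\Delta^2 c$ fail to be permanent cycles in the Adams spectral sequence for $\mathit{mmf}$ while $\tau^2 ag$ and $\tau\Delta^2 c$ survive is precisely what produces the two hidden values, and it requires a careful reading of the $\mathit{mmf}$ Adams differentials. Everything downstream — degree matching and invoking exactness against the already-determined map $q$ — is routine.
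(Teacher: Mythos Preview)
Your approach is essentially the same as the paper's: both argue via the long exact sequence on homotopy that $i$ is injective on classes not divisible by $\tau$, cite \cite{Isa18} to establish that the homotopy classes detected by $\tau^2 ag$ and $\tau\Delta^2 c$ are not $\tau$-divisible (despite their $\tau$-divisible detecting classes), and then pin down the image by degree considerations on the Adams $E_\infty$-page for $\mathit{mmf}/\tau$. Your proposal is somewhat more explicit than the paper's---you spell out the mechanism by which $ag$, $\tau ag$, and $\Delta^2 c$ fail to survive, and you invoke $q\circ i = 0$ to constrain the target to $\ker q$, whereas the paper simply asserts ``for degree reasons the only possible image is''---but the underlying argument is identical.
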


\begin{proof}
The hidden values of inclusion on homotopy are computed by inspection using the exactness of the sequence induced by $\tau$ on homotopy. Specifically recall that $i$ is injective on elements not divisible by $\tau$. For example, the class $\tau^2 a g$ in the Adams spectral sequence for $\mathit{mmf}$ detects an element of $\pi_{32, 16} \mathit{mmf}$ which is not divisible by $\tau$ \cite{Isa18}. Therefore $i(\tau^2 a g)$ is detected by a nontrivial class in the Adams spectral sequence for $\mathit{mmf}/\tau$. For degree reasons the only possible image is $P \overline{\tau h_2 g}$. Similarly, the class $\tau \Delta^2 c$ in the Adams spectral sequence for $\mathit{mmf}$ detects an element of $\pi_{56, 28} \mathit{mmf}$ which is not divisible by $\tau$ \cite{Isa18}. Therefore $i(\tau \Delta^2 c)$ must be detected by a nontrivial class in the Adams spectral sequence for $\mathit{mmf}/\tau$. For degree reasons the only possible image is $P \Delta^2$. 
\end{proof}

\subsection{Some Hidden Extensions in the Adams spectral sequence for $\mathit{mmf}$}
In this section we use the results of Section \labelcref{iqpi} to compute some hidden extensions in the Adams spectral sequence for $\mathit{mmf}$. The Adams spectral sequence for $\mathit{mmf}$ was computed in \cite{Isa18}. These hidden extensions are tabulated in Table \labelcref{17} and are not indicated on Isaksen's charts. Hidden extensions in the classical Adams spectral sequence for $\mathit{tmf}$ are completely tabulated in \cite{tmf}. The extensions in Table \labelcref{17} are exotic motivic phenomena in the sense that these indicate extensions between $\tau$-torsion elements in $\pi_{*, *} \mathit{mmf}$.

\begin{prop}\label{hidn}
Some hidden extensions in $\pi_{*, *}\mathit{mmf}$ are displayed in Table \labelcref{17}.
\end{prop}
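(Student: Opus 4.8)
The plan is to establish each hidden extension in Table \labelcref{17} by transporting information between the Adams spectral sequence for $\mathit{mmf}$ and the Adams spectral sequence for $\mathit{mmf}/\tau$ across the long exact sequence induced by multiplication by $\tau$. The key principle is that a hidden extension between two $\tau$-torsion classes in $\pi_{*,*}\mathit{mmf}$ can be detected by pushing the torsion classes into $\pi_{*,*}\mathit{mmf}/\tau$ via the maps $q$ (which surjects onto the $\tau$-torsion) and then using the ring structure and hidden extensions of $\pi_{*,*}\mathit{mmf}/\tau$ that were resolved in Lemmas \labelcref{lma1} and \labelcref{lma3} and the surrounding material. Concretely, if $x$ and $y$ are $\tau$-torsion in $\pi_{*,*}\mathit{mmf}$ with $q(\overline{x}) = x$ and $q(\overline{y}) = y$ for classes $\overline{x}, \overline{y} \in \pi_{*,*}\mathit{mmf}/\tau$, and if there is an $a$-extension $a \cdot \overline{x} = \overline{y}$ in $\pi_{*,*}\mathit{mmf}/\tau$, then applying $q$ and using that $q$ is a module map over $\pi_{*,*}\mathit{mmf}$ gives $a \cdot x = y$ in $\pi_{*,*}\mathit{mmf}$, possibly hidden.

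First I would assemble the input data: the Adams $E_\infty$-page for $\mathit{mmf}$ from \cite{Isa18}, the Adams $E_\infty$-page for $\mathit{mmf}/\tau$ together with its hidden extensions as established earlier in this section, and the values of $q$ (including the hidden values catalogued in Proposition \labelcref{hidq}) and of $i$ (including the hidden values of Proposition \labelcref{hidi}). Then, for each candidate hidden extension $a \cdot x = y$ in Table \labelcref{17}, I would locate lifts $\overline{x}$ and $\overline{y}$ in $\pi_{*,*}\mathit{mmf}/\tau$ along $q$, verify that the corresponding relation holds in $\pi_{*,*}\mathit{mmf}/\tau$ — either because it is already visible on the Adams $E_\infty$-page there, or because it follows from the hidden extensions in Lemmas \labelcref{lma1}, \labelcref{lma2}, \labelcref{lma3} propagated by $g$-, $P$-, and $\overline{\tau h_2 g}$-periodicity — and then apply $q$ to descend the relation to $\mathit{mmf}$. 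In cases where the source or target is not itself $\tau$-torsion but the extension is nevertheless hidden, I would instead use $i$: pull the relation back from $\mathit{mmf}/\tau$ to $\mathit{mmf}$ via the injectivity of $i$ on $\tau$-nondivisible classes, again using the hidden values of $i$ from Proposition \labelcref{hidi}. Throughout, the bookkeeping of tridegrees (stem, filtration, weight) rules out all but the claimed target in each bidegree, just as in the proofs of Propositions \labelcref{hidq} and \labelcref{hidi}.

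The main obstacle I expect is the interplay between the indeterminacy of the bar-notation lifts $\overline{x}$ and the honest equalities we want in $\pi_{*,*}\mathit{mmf}$. As flagged in the remark following Notation \labelcref{barnotation}, the element $\overline{x}$ is only well-defined modulo the image of $i$, so a relation $a \cdot \overline{x} = \overline{y}$ in $\pi_{*,*}\mathit{mmf}/\tau$ may carry an ambiguity supported on $\im i$; applying $q$ kills that ambiguity (since $\im i = \ker q$), which is exactly why the descent argument works, but one must check in each case that the degree in question does not contain stray $\tau$-periodic classes that would obstruct reading off the extension cleanly. A secondary subtlety is confirming that each extension is genuinely hidden rather than visible on the $E_\infty$-page of $\mathit{mmf}$ — this requires comparing against the multiplicative structure of $\text{Ext}_{\mathcal{A}(2)_*}$ from \cite{Isa09} and the Adams differentials for $\mathit{mmf}$ from \cite{Isa18} — and verifying that no exotic extension in Table \labelcref{17} is forced to be trivial by a competing relation. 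These are finite checks, organized by stem, and the motivic weight grading again substantially reduces the number of possibilities that must be examined.
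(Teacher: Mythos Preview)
Your proposal is correct and matches the paper's approach exactly: the paper proves each entry of Table \labelcref{17} by using the $\pi_{*,*}\mathit{mmf}$-module structure of $i$ and $q$ together with the hidden values computed in Propositions \labelcref{hidq} and \labelcref{hidi}, transporting known extensions in $\pi_{*,*}\mathit{mmf}/\tau$ back to $\mathit{mmf}$. Your discussion of the indeterminacy and degree-checking issues is more detailed than the paper's brief treatment, but the underlying argument is the same.
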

\begin{proof}
The hidden extensions in Table \labelcref{17} can be proved using the results of Section \labelcref{iqpi} and the fact that $i$ and $q$ are both $\pi_{\ast, \ast} mmf$-module maps. For example, Proposition \labelcref{hidq} gives us that $q(a^2)$ is detected by $P h_1 d$ in the Adams spectral sequence for $\mathit{mmf}$. Proposition \labelcref{qmod} gives us that $q(\overline{\tau h_2 g})$ is detected by $\tau h_2 g$. Recall there is an $h_0$-extension from $\overline{\tau h_0 h_2 g}$ to $a^2$ in $\pi_{24, 12} \mathit{mmf}/\tau$. Since $q$ is a $\pi_{\ast, \ast} \mathit{mmf}$-module map, there is an $h_0$-extension from $\tau h_0 h_2 g$ to $P h_1 d$ in $\pi_{23, 13} \mathit{mmf}$. All hidden extensions in Table \ref{17} are determined similarly.
\end{proof}

\newpage

\section{Tables}

\begin{table}[hbt!]
    \centering
    \caption{Ring Generators for $\text{Ext}_{\mathcal{A}(2)_*}$}
    \begin{tabular}{l l l}\toprule
    $(s, f, w)$ & $x$ & $i_*(x)$  \\ [0.5ex] 
 \hline
    (0, 1, 0) & $h_0$ & $h_0$ \\
    (1, 1, 1) & $h_1$ & $h_1$ \\
    (3, 1, 2) & $h_2$ & $h_2$  \\
    (8, 4, 4) & $P$ &  $P$ \\
     (8, 3, 5) & $c$ & $c$\\
     (11, 3, 7) & $u$ & $u$ \\
     (12, 3, 6) & $a$ & $\overline{u} \cdot h_0$ \\
     (14, 4, 8) & $d$ & $d$ \\
     (15, 3, 8) & $n$ & $\overline{u} \cdot h_2$ \\
     (17, 4, 10) & $e$ & $e$ \\
     (20, 4, 12) & $g$ & $g$ \\
     (25, 5, 13) & $\Delta h_1$ & $\overline{u}^2 \cdot h_1$ \\
     (32, 7, 17) & $\Delta c$ & $\overline{u}^2 \cdot c$ \\
     (35, 7, 19) & $\Delta u$ & $\overline{u}^2 \cdot u$ \\
     (48, 8, 24) & $\Delta^2$ & $\overline{u}^4$ \\
     \bottomrule
    \end{tabular}
    \label{1}
\end{table}

\begin{table}[hbt!]
    \centering
    \caption{May $E_1$ generators for $\frac{\mathcal{A}(2)_*}{\tau}$}
    \begin{tabular}{l l l l}\toprule
    $(s, f, w)$ & May name & description & $d_1$ \\ [0.5ex] 
 \hline
    (0, 1, 0) & $h_0$ & $\tau_0$ & \\
    (1, 1, 1) & $h_1$ & $\xi_1$& \\
    (3, 1, 2) & $h_2$ & $\xi_1^2$ &  \\
    (2, 1, 1) & $h_{20}$ & $\tau_1$ &$h_0 h_1$ \\
     (5, 1, 3) & $h_{21}$ & $\xi_2$ & $h_1 h_2$\\
     (6, 1, 3) & $h_{30}$ & $\tau_2$ & $h_0 h_{21} + h_2 h_{20}$\\\bottomrule
    \end{tabular}
    \label{2}
\end{table}

\begin{table}[hbt!]
\centering
\caption{May $E_2$ generators for $\frac{\mathcal{A}(2)_*}{\tau}$}
\begin{tabular}{l l l l} 
 \toprule
 $(s, f, w)$ & May name & description & $d_2$ \\ [0.5ex] 
 \hline
  (0, 1, 0) & $h_0$ & $h_0$ &   \\ 
 (1, 1, 1) & $h_1$ & $h_1$ &  \\ 
 (3, 1, 2) & $h_2$ & $h_2$ &  \\ 
 (4, 2, 2) & $b_{20}$ & $h_{20}^2$ & $h_0^2 h_2$\\
 (7, 2, 4) & $h_0(1)$ & $h_{20}h_{21} + h_{1} h_{30}$ & $h_0 h_2^2$\\
 (10, 2, 6) & $b_{21}$ & $h_{21}^2$ & $h_2^3$\\
 (12, 2, 6) & $b_{30}$ & $h_{30}^2$ & \\

 \bottomrule
\end{tabular}
\label{table:1}
\end{table}

\begin{table}[hbt!]
\centering
\caption{May $E_2$ relations for $\frac{\mathcal{A}(2)_*}{\tau}$}
\begin{tabular}{l l} 
 \toprule
  $(s, f, w)$ & relation \\ [0.5ex] 
 \hline
   (1, 2, 1) & $h_0 h_1$\\ (4, 2, 3) & $h_1 h_2$\\(7, 3, 4) & $h_2 b_{20} + h_0 h_0(1)$ \\ (10, 3, 6) & $h_2 h_0(1) + h_0 b_{21}$ \\ (14, 4, 8) & $b_{20} b_{21} + h_0(1)^2 + h_1^2 b_{30}$ \\\bottomrule
\end{tabular}
\label{table:2}
\end{table}

\begin{table}[hbt!]
\label{tbl1}
\centering
\caption{$\text{Ext}_{\frac{\mathcal{A}(2)_*}{\tau}}$ generators}
\begin{tabular}{l l l l l} 
 \toprule
  $(s, f, w)$ & gen. & May name & Massey & $d_2$ \\ [0.5ex] 
 \hline
 (0, 1, 0) & $h_0$ & $h_0$ & & \\ 
 (1, 1, 1) & $h_1$ & $h_1$ & &\\ 
 (3, 1, 2) & $h_2$ & $h_2$ & & \\ 
 (5, 3, 3) & $\overline{h_1^4}$ & $h_1 b_{20}$ & $\langle h_1, h_0, h_0 h_2 \rangle$& \\
    (8, 3, 5) & $c$ & $h_1 h_0(1)$ & $\langle h_1, h_0, h_2^2 \rangle$& \\
    (8, 4, 4) & $P$ & $b_{20}^2$ & & \\
    (11, 3, 7) & $u$ & $h_1 b_{21}$ & $\langle h_1, h_2, h_2^2 \rangle$& $h_1^2 c$ \\
  (12, 2, 6) & $\overline{u}$ & $b_{30}$ & & $\overline{h_1^2 c}$\\
 (11, 4, 6) & $\overline{h_1^2 c}$ & $b_{20} h_0(1)$ & & \\
 (14, 4, 8) & $d$& $h_0(1)^2$ & & \\
 (17, 4, 10) & $e$ & $h_0(1) b_{21}$ && $h_1^2 d$\\
 (20, 4, 12) & $g$ & $b_{21}^2$ & 
 \\ [1ex] 
 \bottomrule
\end{tabular}
\label{3}
\end{table}

\newpage

\begin{table}[hbt!]
\centering
\caption{$\text{Ext}_{\frac{\mathcal{A}(2)_*}{\tau}}$ relations}
\begin{tabular}{l l} 
 \toprule
  $(s, f, w)$ & relation\\ [0.5ex] 
 \hline
    (1, 2, 1) & $h_0 h_1$ \\ (3, 3, 2) & $h_0^2 h_2$ \\(4, 2, 3) & $h_1 h_2$ \\
    (5, 4, 3) & $h_0 \cdot \overline{h_1^4}$ \\  (6, 3, 4) & $h_0 h_2^2$ \\ (8, 4, 5) & $h_2 \cdot \overline{h_1^4}$\\
    (8, 4, 5) & $h_0 c$\\ (9, 3, 6) & $h_2^3$\\
    (10, 6, 6) & $\overline{h_1^4} \cdot \overline{h_1^4} + P h_1^2$\\ (11, 4, 7) & $h_2 c$\\
    (11, 4, 7) & $h_0 u$ \\
    (11, 5, 6) & $h_0 \cdot \overline{h_1^2 c} + P h_2$\\
    (13, 6, 8) & $c \cdot \overline{h_1^4} + h_1^2 \cdot \overline{h_1^2 c}$\\ (14, 4, 9) & $h_2 u$\\ (14, 5, 8) & $h_2 \cdot \overline{h_1^2 c} + h_0 d$\\
    (16, 6, 10) & $u \mathord{\cdot} \overline{h_1^4} \mathord{+} h_1^2 d \mathord{+} h_1^4 \mathord{\cdot} \overline{u}$\\
    (16, 6, 10) & $c^2 + h_1^2 d$\\
    (16, 7, 9) & $\overline{h_1^4} \cdot \overline{h_1^2 c} + P c$\\
    (17, 5, 10) & $h_2 d + h_0 e$\\
    (19, 6, 12) & $cu + h_1^2 e$\\
    (19, 7, 11) & $Pu + h_1^2 \cdot \overline{h_1^4} \cdot \overline{u} + c \cdot \overline{h_1^2 c}$\\
    (19, 7, 11) & $Pu + d \cdot \overline{h_1^4} + h_1^2 \cdot \overline{u} \cdot \overline{h_1^4}$\\
    (20, 5, 12) & $h_2 e + h_0 g$\\
    (22, 6, 14) & $u^2 + h_1^2 g$\\
    (22, 7, 13) & $e \cdot \overline{h_1^4} + cd + h_1^2 c \cdot \overline{u}$\\
    (22, 7, 13) & $cd + h_1^2 c \cdot \overline{u} + u \cdot \overline{h_1^2 c}$\\
    (22, 8, 12) & $Pd + \overline{h_1^2 c} \cdot \overline{h_1^2 c}$\\
    (25, 7, 15) & $ce + h_1^2 u \cdot \overline{u} + g \cdot \overline{h_1^4}$\\
    (25, 7, 15) & $ce + ud$\\
    (25, 8, 14) & $Pe + d \cdot \overline{h_1^2 c} + h_1^2 \cdot \overline{u} \cdot \overline{h_1^2 c}$\\
    (28, 7, 17) & $cg + ue$\\
    (28, 8, 16) & $e \cdot \overline{h_1^2 c} + h_1^2 d \cdot \overline{u} + d^2$\\
    (28, 8, 16) & $Pg + d^2 + h_1^4 \cdot \overline{u}^2$\\
    (31, 8, 18) & $de + h_1^2 e \cdot \overline{u} + g \cdot \overline{h_1^2 c}$\\
    (34, 8, 20) & $e^2 + dg$\\
    \bottomrule
\end{tabular}
\label{4}
\end{table}
\phantom{love}
\newpage

\begin{table}[hbt!]
\centering
\caption{$\text{Ext}_{\mathcal{A}(2)_*}$-module indecomposables for $\text{Ext}_{\frac{\mathcal{A}(2)_*}{\tau}}$}
\begin{tabular}{l l l} 
 \toprule
 $(s, f, w)$ & $x$& $q_*(x)$ \\ [0.5ex] 
 \hline
 (0, 0, 0) & $1$ & $0$\\
  (5, 3, 3) & $\overline{h_1^4}$ & $h_1^4$\\
(11, 4, 6) & $\overline{h_1^2 c}$ & $h_1^2 c$\\
(12, 2, 6) & $\overline{u}$ & $u$\\
(17, 5, 9) & $\overline{u} \cdot \overline{h_1^4}$ & $h_1^2 d$\\
(23, 6, 12) & $\overline{u} \cdot \overline{h_1^2 c}$ & $cd$ \\
(24, 4, 11) & $\overline{u}^2$ & $\tau h_2 g$\\
(29, 7, 15) & $\overline{u}^2 \cdot \overline{h_1^4}$ & $\Delta h_1 \cdot h_1^3$\\
(35, 8, 18) & $\overline{u}^2 \cdot \overline{h_1^2 c}$ & $\Delta h_1 \cdot h_1c$\\
(36, 6, 18) & $\overline{u}^3$ & $\Delta u$\\
(41, 9, 21) & $\overline{u}^3 \cdot \overline{h_1^4}$ & $\Delta h_1 \cdot  h_1 d$\\
(47, 10, 24) & $\overline{u}^3 \cdot \overline{h_1^2 c}$ & $\Delta c \cdot d$\\
\bottomrule
\end{tabular}
\label{12}
\end{table}

\begin{table}[hbt!]
\caption{Generators for $\pi_{\ast, \ast} \mathit{mmf}/\tau$; or for the Adams-Novikov $E_2$-page for $\mathit{tmf}$}
\centering
\begin{tabular}{l l } 
 \toprule
 $(s, f, w)$ & gen. \\ [0.5ex] 
 \hline
 (0, 1, 0) & $h_0$ \\
 (1, 1, 1) & $h_1$ \\
 (3, 1, 2) & $h_2$  \\
 (5, 3, 3) & $\overline{h_1^4}$  \\
(8, 3, 5) & $c$  \\
 (8, 4, 4) & $P$  \\
 (12, 5, 6) & $h_0^2 a$  \\
 (14, 4, 8) & $d$ \\
 (20, 4, 12) & $g$  \\
 (24, 4, 12) & $\overline{\tau h_2 g}$ 
 \\ [1ex] 
 \bottomrule
\end{tabular}
\label{13}
\end{table}

\begin{table}[hbt!]
\centering
\caption{Relations for $\pi_{\ast, \ast} \mathit{mmf}/\tau$; or for the Adams-Novikov $E_2$-page for $\mathit{tmf}$}
\begin{tabular}{l l l} 
 \toprule
  $(s, f, w)$ & relation & proof \\ [0.5ex] 
 \hline
    (1, 2, 1) & $h_0 h_1$ \\ (3, 3, 2) & $h_0^2 h_2$ \\(4, 2, 3) & $h_1 h_2$ \\
    (5, 4, 3) & $h_0 \cdot \overline{h_1^4}$ \\  (6, 3, 4) & $h_0 h_2^2$ \\ (8, 4, 5) & $h_2 \cdot \overline{h_1^4}$\\
    (8, 3, 5) & $h_0 c$\\
    (11, 3, 7) & $h_2 c$\\
    (9, 3, 6) & $h_2^3 + h_1 c$ & Lemma \labelcref{lma1}\\
    (10, 6, 6) & $\overline{h_1^4} \cdot \overline{h_1^4} + P h_1^2$\\
    (11, 5, 6) & $h_2 P$\\
    (13, 6, 8) & $c \cdot \overline{h_1^4}$\\
    (13, 7, 7) & $h_1 \cdot h_0^2 a + P \cdot \overline{h_1^4}$ & Lemma \labelcref{lma3}\\
    (14, 5, 8) & $ h_0 d$\\ (14, 7, 9) & $h_2 \cdot h_0^2 a$ \\
    (16, 6, 10) & $h_1^2 d $\\
    (16, 6, 10) & $c^2$\\
    (16, 7, 9) & $P c$ \\ 
    (17, 8, 9) &  $\overline{h_1^4} \cdot h_0^2 a$\\
    (19, 7, 11) & $d \cdot \overline{h_1^4}$\\
    (20, 6, 12) &  $h_2^2 d + h_0^2 g$\\
    (20, 8, 11) &  $c \cdot h_0^2 a$\\
    (22, 7, 13) & $cd$\\
     (22, 8, 12) & $P d$\\
     (24, 10, 12) & $h_0^2 a \cdot h_0^2 a + h_0^6 \cdot \overline{\tau h_2 g}$ \\
      (26, 9, 14) &  $d \cdot h_0^2 a$\\
     (28, 8, 16) & $d^2$\\
      (28, 8, 16) & $P g + h_1^4 \cdot \overline{\tau h_2 g}$\\
      (31, 10, 19) & $h_0^2 a \cdot g$ \\

    \bottomrule
\end{tabular}
\label{14}
\end{table}

\phantom{love}
\newpage
\phantom{love}
\begin{table}[hbt!]
\centering
\caption{Some hidden values of $q$}
\begin{tabular}{l l l} 
 \toprule
   $(s, f, w)$ & source & target\\
 \hline
 (12, 5, 6) & $h_0^2 a$ & $P h_1^3$\\
 (24, 6, 12) & $a^2$ & $P h_1 d$ \\
(27, 6, 14) & $n^2$ & $h_1 d^2$ \\
 (36, 9, 18) & $a^3$ & $P \Delta h_1^3$  \\
(48, 8, 24) & $\Delta^2$ & $\tau ang$\\
(48, 9, 24) & $\Delta^2 h_0$ & $P \Delta h_1 d$\\
(49, 9, 25) & $\Delta^2 h_1$ & $\tau^2 d^2 g$\\
(56, 11, 29) & $\Delta^2 c$ & $Pang$\\
\bottomrule
\end{tabular}
\label{15}
\end{table}

\begin{table}[hbt!]
\centering
\caption{Some hidden values of $i$}
\begin{tabular}{l l l} 
 \toprule
   $(s, f, w)$ & source & target\\
 \hline
 (32, 8, 16) & $\tau^2 a g$ & $P \overline{\tau h_2 g}$\\
 (56, 11, 28) & $\tau \Delta^2 c$ & $P \Delta^2$\\
\bottomrule
\end{tabular}
\label{16}
\end{table}

\begin{table}[hbt!]
\centering
\caption{Some hidden extensions in the Adams spectral sequence for $mmf$}
\begin{tabular}{l l l l l} 
 \toprule
   $(s, f, w)$ & $h_i$ & source & target & proof\\
 \hline
 (23, 6, 12) & $h_0$ & $\tau h_0 h_2 g$ & $P h_1 d$ & $q(a^2)$\\
 (26, 6 , 16) & $h_2$ & $\tau h_2^2 g$ & $h_1 d^2$ & $q(n^2)$\\
(32, 8, 16) & $h_1$ & $\tau^2 a g$ & $P \Delta h_1$ & $i(\tau a g)$\\
(32, 10, 16) & $h_0$ & $h_0^2 \tau^2 a g$ & $P h_0 a^2$ & $i(\tau a g)$\\
(47, 10 , 25) & $h_0$ & $\tau a n g$ & $P \Delta h_1 d$ & $q(\Delta^2)$\\
(47, 10 , 25) & $h_1$ & $\tau a n g$ & $\tau^2 d^2 g$ & $q(\Delta^2)$\\
(56, 11, 28) & $h_0$ & $\tau \Delta^2 c$ & $\Delta^2 P h_0 + \tau^3 h_1 d e$ & $i(\tau \Delta^2 c)$\\
(57, 12, 29) & $h_1$ & $\tau \Delta^2 c$ & $\Delta^2 P h_1^2$ & $i(\tau \Delta^2 c)$\\
\bottomrule
\end{tabular}
\label{17}
\end{table}

\newpage

\section{Charts}
Chart 1 displays the $E_2$-page of the Adams spectral sequence for $\mathit{mmf}/\tau$, or the $E_2$-page of the algebraic Novikov spectral sequence for $\mathit{tmf}$, with differentials through the 70-stem. Chart 1 can be extended past the 70-stem by $P, g, \Delta^2$-periodicity. Chart 2 displays the $E_\infty$-page of the Adams spectral sequence for $\mathit{mmf}/\tau$, or the $E_\infty$-page of the algebraic Novikov spectral sequence for $\mathit{tmf}$, with hidden extensions through the 70-stem. Chart 2 can be extended past the 70-stem by $P, g, \Delta^2$-periodicity. For each fixed stem and filtration, the Adams spectral sequence for $\mathit{mmf}/\tau$ is a module over $\mathbb{F}_2$.

\begin{itemize}
    \item A solid gray dot \begin{tikzpicture}
        [dot/.style={circle,
				inner sep=0,
				minimum size=0.09cm,
				scale=1.10*1.5},tau0/.style={
				draw={chartgray},
				fill={chartgray}}]
    \node[dot, tau0, label=:$ $] at (0.0,0.0) {};
    \end{tikzpicture} indicates a copy of $\mathbb{F}_2$ in the associated graded object which is in the image of the inclusion $i_*$ of the bottom cell from \labelcref{les}.
    \item A solid red dot \begin{tikzpicture}
        [dot/.style={circle,
				inner sep=0,
				minimum size=0.09cm,
				scale=1.10*1.5},tau0/.style={
				draw={red},
				fill={red}}]
    \node[dot, tau0, label=:$ $] at (0.0,0.0) {};
    \end{tikzpicture} indicates a copy of $\mathbb{F}_2$ in the associated graded object which takes a nontrivial value under the projection $q_*$ to the top cell \labelcref{les}. 
     \item {\color{chartgray} Gray} lines of slope 0, 1, and 1/3 indicate multiplications by $h_0$, $h_1$, and $h_2$ respectively that are detected in the preimage of the map $i_*$. 
     \item {\color{red} Red} lines of slope 0, 1, and 1/3 indicate multiplications by $h_0$, $h_1$, and $h_2$ respectively that are detected in the image of the map $q_*$.
     \item {\color{darkgreen} Green} lines of slope 0, 1, and 1/3 indicate multiplications by $h_0$, $h_1$, and $h_2$ respectively that are hidden in the sense that their sources take nontrivial values under the map $q_*$ but their targets are in the image of the map $i_*$.
     \item Arrows of slope 1 indicate infinite towers of $h_1$ extensions.
     \item {\color{darkcyan} Teal} lines of negative slope indicate Adams differentials.
     \item {\color{amber} Yellow} lines indicate hidden extensions by $h_0$, $h_1$, and $h_2$.
\end{itemize}

\newpage

\newlength{\classpageheight}
\newlength{\classpagewidth}
\setlength{\classpageheight}{\pdfpageheight}
\setlength{\classpagewidth}{\pdfpagewidth}


\pdfpagewidth=80.0cm
\pdfpageheight=45.0cm

\input{mmftE2March2024} 
\input{mmftEooFeb2024}

\pdfpagewidth=\classpagewidth
\pdfpageheight=\classpageheight

\bibliographystyle{alpha} 
\bibliography{refs}
\end{document}